\documentclass[11pt]{article} 

\usepackage{graphicx} 
\usepackage{setspace}

\usepackage{amsmath,stix}
\usepackage{amsfonts}

\usepackage{subcaption}

\usepackage{tikz}
\usepackage{pgfplots}

\usepackage{enumitem}

\usepackage{hyperref}

\usepackage{pifont}

\setlength{\textwidth}{6.34in}
  \setlength{\textheight}{8.7in}
  \setlength{\oddsidemargin}{0.10in}
  \setlength{\evensidemargin}{0.13in}
  \setlength{\topmargin}{+0.00in}
   \setlength{\parskip}{+0.05in}
\voffset-10mm

\usepackage{amsthm}
\newtheorem{theorem}{Theorem}[section]
\newtheorem{lemma}[theorem]{Lemma}
\newtheorem{corollary}[theorem]{Corollary}
\newtheorem{proposition}[theorem]{Proposition}
\newtheorem{definition}[theorem]{Definition}
\newtheorem{example}[theorem]{Example}
\newtheorem{remark}[theorem]{Remark}
\newtheorem{conjecture}[theorem]{Conjecture}
\newtheorem{question}[theorem]{Question}

\interfootnotelinepenalty=10000

\numberwithin{equation}{section}
\counterwithin{figure}{section}


\def\XXint#1#2#3{{\setbox0=\hbox{$#1{#2#3}{\int}$}
\vcenter{\hbox{$#2#3$}}\kern-.5\wd0}}

\DeclareMathOperator{\spt}{spt}
\DeclareMathOperator{\proj}{proj}
\DeclareMathOperator{\sgn}{sgn}
\DeclareMathOperator{\Id}{Id}

\begin{document}

\title{On the asymptotic behavior of the Repulsive \\ Pressureless Euler-Poisson System}
\author{Nicholas Biglin, Columbia University, nb3101@columbia.edu \\ Joseph Crachiola, Wayne State University, ho9872@wayne.edu\\ Jack Curtis, Worcester Polytechnic Institute, jtcurtis@wpi.edu \\ Thomas Kunz, University of Minnesota, kunz0119@umn.edu \\Omkar Maralappanavar, University of Connecticut, omkar.maralappanavar@uconn.edu\\ Adrian Tudorascu, West Virginia University, adtudorascu@mail.wvu.edu}

\maketitle
\begin{abstract}
    The main objective of this paper is a study of the asymptotic behavior of distributional solutions to the one-dimensional repulsive pressureless Euler-Poisson system.
    The system is a model for the dynamics of a mass distribution evolving on $\mathbb R$ whose masses exert outward forces on one another. A discrete (describing the evolution of finitely many particles) solution is called \textit{sticky} if, upon collision, particles stick together and move as one for all subsequent time, according to the conservation of mass and momentum principles.
    We prove results on the total energy (Hamiltonian) of the system and demonstrate the existence and uniqueness of so-called ``perfect'' states, where the Hamiltonian is constant over all time and the solution converges to equilibrium,  a single stationary particle. We provide a necessary and a sufficient condition for finite-time collapse, and present a quadratic envelope within which a solution must remain in order to collapse. We demonstrate various (counter)examples that illustrate the unique behavior of the repulsive scheme with the sticky condition, analytically and with a computer simulation. 
\end{abstract}

\doublespacing
\section{Introduction}
    \subsection{Overview}
    Consider $-\infty<y^0_1<\ldots<y^0_n<\infty$ and assign a positive mass $m_i$ to each $y_i^0$, $i=1,\ldots,n$ such that $m_1+\ldots+m_n=1$. Furthermore, assign a velocity $v^0_i\in\mathbb{R}$ to each ``particle'' $p_i$ (located initially at $y^0_i$ and of mass $m_i$) and then let the ensemble evolve according to the conservation of mass and momentum principles, and under a repulsive interaction potential. We assume each particle moves from its initial location with its initial velocity and under an acceleration equal to the difference between the total mass to the right and the total mass to the left. At collision of a number of particles, they stick together to form a particle of mass equal to the sum of the masses of the colliding particles. The velocity of the newly formed particle is given by the conservation of momentum. It turns out that if one denotes the empirical probability distributions and their velocities thus formed, the pair $(\rho,v)$ solves the system
    \begin{equation}\label{PEP}
        \begin{cases}\partial_t \rho +   \partial_y(\rho v)=0 \\
        \partial_t(\rho v)+\partial_y(\rho v^2)=\frac12 (\text{sgn}*\rho)\rho~~,
    \end{cases}
    \end{equation}
    in the sense of distributions on $(0,\infty)\times\mathbb{R}$. 
    Originally, Zeldovich \cite{Zeldovich} proposed the pressureless Euler system (the right hand side of the momentum equation above is zero, i.e. there is no long range interaction between particles so they travel at constant velocity between collisions) as a toy model for formation of large structures in the universe by accretion of matter. Its mathematical analysis has been surprisingly incremental, as the natural space for solutions is not a functional one but one that consists of general probability measures. In the case of pressureless Euler, the earliest efforts \cite{ERS}, \cite{BrenGren} contended with the basic question of existence of solutions in the case where the initial distribution is more general than a convex combination of Dirac masses. While neither of these pioneering works covered the most general case, the variational formulation in \cite{ERS} is reminiscent of the projection formula from \cite{Natile-Savare}, \cite{BLPTW} while the approach via scalar conservation laws from \cite{BrenGren} was later used in \cite{Tudorascu-2008} to extend the existence results to general initial data and the case where the right-hand side of the momentum equation is an interaction term that includes both the attractive and repulsive pressureless Euler-Poisson systems. In \cite{Tudorascu2015} existence results are extended to the case where a viscosity term of a particular form is added to the momentum equation. 
     For the pressureless Euler or attractive presureless Euler-Poisson systems the time evolution is explicitly described in terms of a projection onto the convex cone of square integrable, nondecreasing functions on the unit interval \cite{Natile-Savare}, \cite{BLPTW}. This, together with earlier results on uniqueness of solutions satisfying certain entropy-like conditions \cite{Huang-Wang}, allows us to define a class of {\it sticky} solutions, which contains all the discrete sticky solutions and is stable with respect to the initial data. Furthermore, sharp results on convergence to equilibrium were obtained in \cite{BLPTW} as a result of that explicit projection formula. Prior to that, Hynd and Tudorascu \cite{Hynd-Tudorascu} proved that compactly supported sticky particles solutions to Pressureless Euler converge to an asymptotic equilibrium; in \cite{Tudorascu-Wassink} it was proved that, as opposed to the attractive pressureless Euler-Poisson system, the convergence to equilibrium can be arbitrarily slow. Existence and uniqueness of sticky particles solutions for  the pressureless Euler system in a bounded domain with sticky boundary conditions was established in \cite{Tudorascu2023}.

     It stands to reason that in the attractive Euler-Poisson case all sticky particles solutions converge to an equilibrium consisting of a single stationary Dirac \cite{BLPTW}. In the absence of the attraction force, the interaction-less case, i.e. pressureless Euler, predicates the existence of an asymptotic equilibrium solely on the interplay between the initial positions and velocities \cite{Hynd-Tudorascu}. Independent of the physical intuition, the ``explicit'' formula for sticky particles solutions is available in both cases, so sharp results on the asymptotic behavior of solutions are derived directly from that; it is unnecessary to approximate the solutions with discrete ones in order to draw conclusions in the limit.
     It was already known that such a projection formula does not hold in the repulsive case \cite{Gangboetal} and we construct explicit examples where it fails instantaneously. We also show that even the class of discrete solutions is not stable with respect to the initial data, as non-sticky discrete solutions occur as limits of sticky discrete solutions. It was discussed in the literature \cite{Gangboetal} that the repulsive interaction is antagonistic to the stickiness principle (whereas the opposite is true in the attractive case), so it is unsurprising that one is forced to extend the class of distributional solutions to one we dubbed ``generalized sticky particles solutions'', which is simply defined as the closure of the class of sticky discrete solutions. 

     We note here that related works on biological aggregation deal with short range repulsive and long-range attractive interactions, variational solutions for, spatial confinement and the asymptotic behavior of the resulting dynamics. We mention only a few here: \cite{Balague}, \cite{Carrilloetal}, \cite{Fet-Huang}, \cite{Frank}. 
     
     We study the total energy (Hamiltonian) of the system and in the finitely many particles case we demonstrate the existence and uniqueness of so-called ``perfect'' states, where the Hamiltonian is constant over all time and the solution converges to equilibrium,  a single stationary particle. These perfect solutions are useful in that they display the most energetically efficient way of reaching equilibrium, i.e. they are the slowest to do so. We provide sufficient conditions for finite time collapse in the discrete case in Theorem \ref{discrete-sufficient}, whereas in the general case these conditions are more restrictive and spelled out in Theorem \ref{continuous-sufficient}. In the discrete case we also have a necessary condition for finite time collapse in Theorem \ref{discrete-necessary}, whereas Theorem \ref{quad-envelope} produces a sharp quadratic envelope which a solution must remain within in order to collapse. We demonstrate various (counter)examples that illustrate the unique behavior of the repulsive scheme with the sticky condition, analytically and with a computer simulation. 
    \subsection{Preliminaries}
    We recommend \cite{Villani} as a good source for the facts stated below. 
        Throughout this paper, $\mathcal{P}_p(E)$ denotes the set of Borel probability measures $\mu$ on a Borel set $E\subseteq\mathbb{R}$ with finite $p$-moment ($\int_E|y|^p\mu(dy)<\infty$). $\mathcal{L}(0,1)$ denotes the Lebesgue measure on $(0,1)$.
            \begin{definition}
                For $\mu_1,\mu_2\in\mathcal{P}_p(\mathbb{R})$, the \emph{$p$-Wasserstein distance} between the measures is given by 
                \begin{align*}
                    W_p^p(\mu_1,\mu_2)=\inf\Bigg\{\int_{\mathbb{R}^2}|y_1-y_2|^p\nu(dy_1,dy_2)\ | \ \nu\in\mathcal{P}(\mathbb{R}), (\pi)_\#^k\nu=\mu_k\Bigg\},
                \end{align*}
                where $(\pi)_\#^k$ is the projection onto the $k$th coordinate ($k\in\{1,2\}$). 
            \end{definition}
            If $S:\mathbb{R}\rightarrow\mathbb{R}$ is a Borel map and $\mu\in\mathcal{P}(\mathbb{R})$, then the {\it push-forward} of $\mu$ by $S$ is the Borel probability measure $\nu$ defined by $\nu(B):=\mu(S^{-1}(B))$ for all Borel sets $B$. It is denoted by $\nu:=S_\#\mu$ and it can also be alternatively defined as the unique Borel probability which satisfies
            $$\int\varphi(y)\nu(dy)=\int\varphi(S(x))\mu(dx)\mbox{ for all }\varphi\in C_c(\mathbb{R}).$$
            The (right-continuous) cumulative distribution function of $\mu$ is $M^{\mu}(y):=\mu((-\infty,y])$ and its generalized inverse $N^{\mu}$ is defined on $(0,1)$ as 
            $$N^{\mu}(m):=\inf\{y\in\mathbb{R}\ :\ M^{\mu}(y)>m\}\mbox{ for all }m\in(0,1).$$ 
            It is the unique right-continuous, nondecreasing function on $(0,1)$ such that $N^\mu_\#\mathcal{L}(0,1)=\mu$.
            It is known that 
            \begin{align}\nonumber
                W_p^p(\mu_1,\mu_2)=\int_0^1|N^{\mu_1}(x)-N^{\mu_2}(x)|^pdx\mbox{ for all }1\leq p<\infty
            \end{align}
            and if $p=1$ we also have $W_1(\mu_1,\mu_2)=\|M^{\mu_1}-M^{\mu_2}\|_{L^1(\mathbb{R})}$.

\section{Sticky Particles and Generalized Sticky Particles Solutions}
    \subsection{The Projection Formula}
        In \cite{BLPTW} it is proved that the following projection formula gives a sticky particles solution to the \emph{attractive} PEP equations. Let $N_t$ be the optimal map which pushes forward the Lebesgue measure on $(0,1)$ onto $\rho_t$. Then, 
        \begin{equation}\label{proj-formula}
            N_t=\text{proj}_\mathcal{K}\Big(N_0+tv_0(N_0)+\frac{t^2}{2}(0.5-\Id)\Big), 
        \end{equation}
        where $\mathcal{K}$ is the space of essentially nondecreasing functions in $L^2(0,1)$, is the desired solution. The projection onto the space $\mathcal{K}$ can be taken as follows: 
        \begin{align}
            \proj_\mathcal{K}(f)=\frac{d^+}{dm}F^{**}(m),
        \end{align}
        where $F^{**}$ is the convex envelope of the primitive of $f$ vanishing at zero.

        One might expect a similar formula to work for the repulsive case \eqref{PEP}, perhaps by replacing $(0.5-\Id)$ by $(\Id-0.5)$, but this is in fact false. This can be seen by considering a simple example in both the repulsive and attractive cases. 
        \begin{example}\label{attractive-repulsive-comparison}
            Let $\rho_0=\frac{1}{2}\delta_{-1}+\frac{1}{2}\delta_1$, $v_0=-\frac{\sqrt{2}}{2}\sgn$. The SPS for the attractive PEP equations with initial condition $(\rho_0,v_0)$ is 
            \begin{align*}
                \rho_t=\begin{cases}\frac{1}{2}\delta_{-1+\frac{\sqrt{2}}{2}t+\frac{t^2}{8}}+\frac{1}{2}\delta_{1-\frac{\sqrt{2}}{2}t-\frac{t^2}{8}} &\text{ for }0\leq t<4-2\sqrt{2}, 
                    \\
                    \delta_0, &\text{ for }t\geq4-2\sqrt{2},
                \end{cases}
            \end{align*}
            and that of the repulsive PEP is 
            \begin{align*}
                \rho_t=\begin{cases}\frac{1}{2}\delta_{-1+\frac{\sqrt{2}}{2}t-\frac{t^2}{8}}+\frac{1}{2}\delta_{1-\frac{\sqrt{2}}{2}t+\frac{t^2}{8}} &\text{ for }0\leq t<2\sqrt{2}, 
                    \\
                    \delta_0, &\text{ for }t\geq2\sqrt{2}.
                \end{cases}
            \end{align*}
        \end{example}
        These solutions look similar, but there is an important difference. Think of each $\delta$ as a particle, and consider its parabolic trajectory as though particles were to pass through each other, (ignoring the sticky condition) $g_1$ and $g_2$, as can be seen in figure \ref{fig:2-particle-ghost}. One can think of the projection formula as essentially working by taking this trajectory and averaging the mass that has crossed over when projecting onto $\mathcal{K}$. The difference, which can be seen in figure \ref{fig:2-particle-ghost}, is that in the repulsive case, there is no guarantee that $g_1(t)$ and $g_2(t)$ remain crossed after intersecting.\footnote{Replacing the space $\mathcal{K}$ with a different fixed space of functions $\mathcal{J}$ will not rectify this: observe in the repulsive solution in Example \ref{attractive-repulsive-comparison} that, for $G_t=N_0+tv_0(N_0)+\frac{t^2}{2}(0.5-\Id)$, $G_0=G_4$; however, we would need $\proj_\mathcal{J}G_0=N_0$ and $\proj_\mathcal{J}G_4=0$, (since $0_\#\mathcal{L}(0,1)=\delta_0$) which is not possible.} Upon uncrossing, the projection formula will yield two separate particles again, violating the sticky condition.

        \begin{figure}
            \centering
            \begin{tikzpicture}[scale=1]
                \draw (-1.75-3,0)--(1.75-3,0);
                \draw (0-3,-0.25)--(0-3,3);
                \draw (0.2-3,2.8) node {$t$};
                \draw (1.5-3+0.1,-0.3) node {$y$};

                \draw (-1.3-3,2) node {$g_2$};
                \draw (1.5-3,1.5) node {$g_1$};
                
                \draw plot[smooth, domain=-1:0] (\x-3, {2*sqrt(\x+2)-2});
                \draw plot[smooth, domain=0:1] (\x-3, {2*sqrt(-\x+2)-2});

                \draw[dashed] plot[smooth, domain=0:1.7] (\x-3, {2*sqrt(\x+2)-2});
                \draw[dashed] plot[smooth, domain=-1.7:0] (\x-3, {2*sqrt(-\x+2)-2});

                \draw[fill] (-1-3,0) circle [radius=2pt];
                \draw[fill] (1-3,0) circle [radius=2pt];
                \draw[fill] (0-3,0.8284) circle [radius=2.828pt];

                \draw(-4,-0.2)--(-4,0.2); 
                \draw(-2,-0.2)--(-2,0.2); 
                \draw (-4,-0.4) node {$\scriptstyle-1$};
                \draw (-2,-0.4) node {$\scriptstyle1$};

                \draw(-3-0.2,0.8284)--(-3+0.2,0.8284); 
                \draw (-3.75,0.8) node {$\scriptstyle4-2\sqrt{2}$};

                \*\*\*\*\*\*\*\*\*\*\*\*\*\*\*

                \draw (-1.75+3,0)--(1.75+3,0);
                \draw (0+3,-0.25)--(0+3,5);
                \draw (0.2+3,4.8) node {$t$};
                \draw (1.5+3+0.1,-0.3) node {$y$};

                \draw (-1.45+3,3.93) node {$g_1$};
                \draw (1.45+3,3.93) node {$g_2$};
            
                \draw plot[smooth, domain=-1:0] (\x+3, {-2*sqrt(-\x)+2});
                \draw plot[smooth, domain=0:1] (\x+3, {-2*sqrt(\x)+2});

                \draw[dashed] plot[smooth, domain=-1.7:0] (\x+3, {2*sqrt(-\x)+2});
                \draw[dashed] plot[smooth, domain=0:1.7] (\x+3, {2*sqrt(\x)+2});

                \draw[fill] (-1+3,0) circle [radius=2pt];
                \draw[fill] (1+3,0) circle [radius=2pt];
                \draw[fill] (0+3,2) circle [radius=2.828pt];

                \draw(2,-0.2)--(2,0.2); 
                \draw(4,-0.2)--(4,0.2); 
                
                \draw (2,-0.4) node {$\scriptstyle-1$};
                \draw (4,-0.4) node {$\scriptstyle1$};

                \draw(3-0.2,2)--(3+0.2,2); 
                \draw (3.55,2.05) node {$\scriptstyle2\sqrt{2}$};

            \end{tikzpicture}
            \caption{The trajectories in Example \ref{attractive-repulsive-comparison}. Left is the attractive case, right is the repulsive case.}
            \label{fig:2-particle-ghost}
        \end{figure}
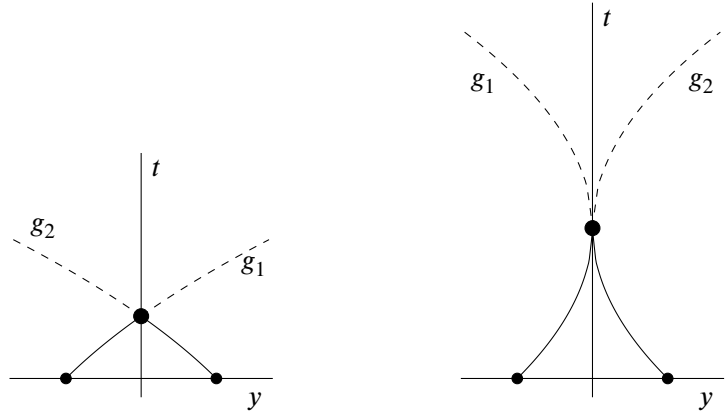

    \subsection{Describing Solutions}
        
        While the projection formula does not give a sticky particles solution, one can still find explicit solutions in certain cases. Taken together, the following properties of the repulsive PEP system are useful for this.
        \begin{proposition}\label{properties}
            Consider $(\rho_t,v_t)$, a distributional solution to the repulsive PEP equations with initial conditions $(\rho_0,v_0)$. Let $\rho_0\in\mathcal{P}_2(\mathbb{R})$, $v_0\in L^2(\rho_0)$. The following are properties of $(\rho_t,v_t)$. 
            \begin{enumerate}
                \item \textbf{Kinematic Law:}\label{particle-kinematics} Suppose $\rho_0=\sum_{i\in\mathbb{N}}m_i\delta_{y_i(0)}$. If there are no collisions in $(0,t_0)$, then the sticky particles solution in $(0,t_0)$ is given by $\rho_t=\sum_{i\in\mathbb{N}}m_i\delta_{y_i(t)}$, where
                \begin{align} \label{kinematics}
                    y_i(t)=y_i(0)+tv_0\big(y_i(0)\big)+\frac{t^2}{4}\Big(\sum_{\{j | y_j(0)<y_i(0)\}}m_j\ -\sum_{\{j | y_j(0)>y_i(0)\}}m_j\Big). 
                \end{align}       
                That is, the above solution holds up until $y_i(t)=y_j(t)$, $i\neq j$. $v_t$ is defined $\rho_t$ almost everywhere by $v(y_i(t))=y_i'(t)$.
                \item \textbf{Galilean Invariance:}\label{galilean-invariance} Define the initial center of mass and total momentum for the initial condition $(\rho_0,v_0)$ by 
                    \begin{align}
                        Y_0=\int y\rho_0(dy), \quad m_0=\int v_0(y)\rho_0(dy).
                    \end{align}
                    Then, $(\bar{\rho},\bar{v})$, where 
                    \begin{align*}
                        \bar{\rho}_t&=\big(\text{Id}-Y_0-tm_0\big)_\#\rho_t, \quad \bar{v}_t=v_t(\cdot+Y_0+tm_0)-m_0,
                    \end{align*}
                    is a solution to the repulsive PEP equations for the initial condition $(\bar{\rho}_0,\bar{v}_0)=((\text{Id}-Y_0)_\#\rho_0,v_0(\cdot+Y_0)-m_0)$ if and only if $(\rho,v)$ is a solution for the initial condition $(\rho_0,v_0)$. Further, for all $t\geq0$,\footnote{This implies that the center of mass for a general solution $(\rho_t,v_t)$ is $\bar{y}(t)=Y_0+tm_0$.}
                    \begin{align*}
                        \int y \bar{\rho}_t(dy)=\int \bar{v}_t(y) \bar{\rho}_t(dy)=0.
                    \end{align*}
                \item \textbf{Equilibrium Solutions:}\label{only-equilibrium} The only equilibrium solutions are point masses. Specifically, if we require that the center of mass and total momentum are $0$ as in \ref{galilean-invariance}, the only equilibrium is $(\delta_0,0)$.
            \end{enumerate}
        \end{proposition}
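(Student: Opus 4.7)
For the kinematic law, I would substitute the ansatz $\rho_t=\sum_i m_i\delta_{y_i(t)}$ with $v_t(y_i(t))=y_i'(t)$ into the distributional formulation of \eqref{PEP}. Testing the continuity equation against $\varphi\in C_c^\infty((0,t_0)\times\mathbb{R})$ is automatic by the chain rule $\frac{d}{dt}\varphi(t,y_i(t))=\partial_t\varphi(t,y_i(t))+y_i'(t)\partial_y\varphi(t,y_i(t))$. Testing the momentum equation and integrating by parts in $t$ yields $\sum_i m_i y_i''(t)\varphi(t,y_i(t))=\frac12\sum_i m_i(\sgn*\rho_t)(y_i(t))\varphi(t,y_i(t))$; since the convention $\sgn(0)=0$ gives $(\sgn*\rho_t)(y_i(t))=\sum_{j:y_j(t)<y_i(t)}m_j-\sum_{j:y_j(t)>y_i(t)}m_j$, and the initial ordering of the particles is preserved on $(0,t_0)$ by the no-collision assumption, matching coefficients produces the ODE $y_i''(t)=\frac12\big(\sum_{j<i}m_j-\sum_{j>i}m_j\big)$; integrating twice with initial data $y_i(0)$ and $y_i'(0)=v_0(y_i(0))$ yields \eqref{kinematics}.

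For Galilean invariance, I would set $T_t(y):=y-Y_0-tm_0$ so that $\bar\rho_t=(T_t)_\#\rho_t$. Given a test function $\varphi$, defining $\psi(t,y):=\varphi(t,T_t(y))$ gives $\partial_t\psi=\partial_t\varphi-m_0\partial_y\varphi$ and $\partial_y\psi=\partial_y\varphi$ (both evaluated at $(t,T_t(y))$); a push-forward change of variables combined with the translation identity $(\sgn*\bar\rho_t)(\bar y)=(\sgn*\rho_t)(\bar y+Y_0+tm_0)$ converts each distributional equation for $(\bar\rho,\bar v)$ tested against $\varphi$ into the corresponding equation for $(\rho,v)$ tested against $\psi$, and vice versa. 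The vanishing center of mass and total momentum then follow by push-forward from the conservation identities $\int y\rho_t(dy)=Y_0+tm_0$ and $\int v_t\rho_t(dy)=m_0$, which are obtained by testing the continuity equation against the weight $y$ and the momentum equation against $1$ respectively; the latter uses the antisymmetry $\iint\sgn(y-z)\rho_t(dy)\rho_t(dz)=0$.

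For the equilibrium characterization, assume $(\rho,v)$ is time-independent. The continuity equation gives $\partial_y(\rho v)=0$, so $\rho v$ is a constant distribution on $\mathbb{R}$; since $\rho v$ is a finite signed measure and the only such measure proportional to Lebesgue measure is $0$, we conclude $v=0$ $\rho$-a.e. The momentum equation then reduces to $(\sgn*\rho)\rho=0$ as a measure, and since $\sgn*\rho$ is a bounded Borel function, this says $(\sgn*\rho)(y)=0$ for $\rho$-a.e.\ $y$. Writing $F$ for the right-continuous cumulative distribution function of $\rho$, one computes $(\sgn*\rho)(y)=F(y^-)-(1-F(y))=F(y)+F(y^-)-1$, so the condition becomes $F(y)+F(y^-)=1$ for $\rho$-a.e.\ $y$. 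I expect the main obstacle to be extracting the structure of $\rho$ from this identity; I would split $\rho=\rho_c+\rho_d$ into continuous and purely atomic components. At continuity points of $F$ (which is $\rho_c$-a.e.) the condition forces $F(y)=1/2$, but $\{F=1/2\}$ is an interval on which $F$ is constant and thus carries no $\rho_c$-mass, giving $\rho_c=0$. Ordering the atoms $y_1<y_2<\cdots$ with masses $m_i$ and evaluating the condition at $y_i$ gives $2\sum_{j<i}m_j+m_i=1$; at $i=1$ this forces $m_1=1$, so $\rho$ is a single Dirac, and the normalized version $(\delta_0,0)$ then follows from the Galilean invariance proved above.
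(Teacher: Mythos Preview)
Your arguments for parts 1 and 2 follow essentially the same route as the paper: direct verification of the weak formulation for the kinematic law, and a push-forward change of variables for Galilean invariance (the paper writes $\tilde\varphi(t,y)=\varphi(t,y-Y_0-tm_0)$ and obtains the barred momentum equation by subtracting $m_0$ times the barred continuity equation, which is exactly your $\psi$/$\varphi$ substitution unpacked).

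For part 3 the paper simply defers to Proposition~3.1 of \cite{BLPTW}, so your direct argument is a genuine addition and makes the result self-contained. The reduction to $(\sgn*\rho)\rho=0$ and the computation $(\sgn*\rho)(y)=F(y)+F(y^-)-1$ are clean, and your elimination of the continuous part is correct. One small point to tighten: when you write ``ordering the atoms $y_1<y_2<\cdots$'' and evaluate at $i=1$, you are implicitly assuming a least atom, which need not exist a priori. The fix is immediate: if $y_a<y_b$ are two distinct atoms, then $F(y_b)\geq F(y_a)$ and $F(y_b^-)\geq F(y_a)$, so $F(y_b)+F(y_b^-)\geq 2F(y_a)=F(y_a)+F(y_a^-)+m_a=1+m_a>1$, contradicting the identity at $y_b$; hence there is exactly one atom.
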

        \begin{proof} We state here the weak formulation of \eqref{PEP}: the continuity equation is 
        \begin{align}\label{continuity-equation}
            \frac{d}{dt}\int\varphi(y)\rho_t(dy)=\int\varphi'(y)v_t(y)\rho_t(dy), 
        \end{align}
        for all $\varphi\in C_b^1(\mathbb{R})$, and the momentum equation is given by, for all $\varphi\in C_b^1(\mathbb{R})$,
        \begin{align}\label{momentumn-equation}
            \frac{d}{dt}\int\varphi(y)v_t(y)\rho_t(dy)=\int\varphi'(y)v_t^2(y)\rho_t(dy)+\frac{1}{2}\int\varphi(y)\big(\text{sgn}*\rho_t\big)(y)\rho_t(dy). 
        \end{align}
        The time derivatives are to be understood as derivatives of absolutely continuous functions 
        on $\mathbb{R}$. 
        \begin{enumerate}
            \item  For $t\in(0,t_0)$, $\varphi$ bounded,
                \begin{align*}
                    \frac{d}{dt}\int\varphi(y)\rho_t(dy)=\ &\frac{d}{dt}\sum_{i}m_i\varphi\big(y_i(t)\big)=\sum_{i}m_i\varphi'\big(y_i(t)\big)y_i'(t)
                    \\
                    =\ &\sum_{i}m_i\varphi'\big(y_i(t)\big)v_t\big(y_i(t)\big)=\int\varphi'(y)v_t(y)\rho_t(dy),
                \end{align*}
                and, since $v_t\in L^2(\rho_t)$,  
                \begin{align*}
                    \frac{d}{dt}\int\varphi(y)v_t(y)&\rho_t(dy)=\ \frac{d}{dt}\sum_{i}m_i\varphi\big(y_i(t)\big)v_t\big(y_i(t)\big)
                    \\
                    =&\ \sum_{i}m_i\varphi'\big(y_i(t)\big)y_i'(t)y_i'(t)+\sum_{i}m_i\varphi\big(y_i(t)\big)y_i''(t)
                    \\
                    =&\ \sum_{i}m_i\varphi'\big(y_i(t)\big)y_i'(t)^2+\frac{1}{2}\sum_{i}m_i\varphi\big(y_i(t)\big)\Big(\sum_{y_j(0)<y_i(0)}m_j\ -\sum_{y_j(0)>y_i(0)}m_j\Big)
                    \\
                    =&\ \int\varphi'(y)v_t^2(y)+\frac{1}{2}\int\varphi(y)\big(\text{sgn}*\rho_t\big)\rho_t(dy). 
                \end{align*}
                It is clear that $\rho_t\in\mathcal{P}_2(\mathbb{R})$. As for the sticky condition, we have $N_t(x)=\sum_{i}y_i(t)\chi_I(x)$ where $I={\big(\sum_{y_j(0)<y_i(0)}m_j,\sum_{y_j(0)\leq y_i(0)}m_j\big)}$, so the condition is satisfied up until $y_i(t_0)=y_j(t_0)$, $i\neq j$. 
           
            \item Let $\varphi\in C_c^1([0,T)\times\mathbb{R})$ and set $\tilde{\varphi}(t,y)=\varphi(y-Y_0-tm_0)$. The continuity equation for $(\rho,v)$ yields 
            \begin{align*}
            \int_0^T \!\! \int\big[\partial_t\tilde{\varphi}+v\partial_y\tilde{\varphi}\big]d\rho_tdt=-\int\tilde{\varphi}(0,y)\rho_0(dy),
            \end{align*}
            which is equivalent to 
            \begin{align}\nonumber
                \int_0^T \!\! \int\big\{\partial_t\varphi(t,S_t(y))+\big[v(t,y)-m_0\big]\partial_y\varphi(t,S_t(y))\big\}\rho_t(dy)dt=-\int\varphi(0,S_0(y))\rho_0(dy),
            \end{align}
            where $S_t(y)=y-Y_0-tm_0$, with $S_t^{-1}(z)=z+Y_0+tm_0$, so 
            \begin{equation}\label{new-cont-eq}
                \int_0^T \!\! \int\big\{\partial_t\varphi(z,t)+\big[v(t,S_t^{-1}(z))-m_0\big]\partial_z\varphi(t,z)\big\}\bar{\rho}_t(dz)dt=-\int\varphi(0,z)\bar{\rho}_0(dz);
            \end{equation}
            that is, $(\bar{\rho},\bar{v})$ satisfies the continuity equation. As for the momentum equation, we have 
            \begin{align*}
                A+\int_0^T \!\! \int\big[v\partial_t\tilde{\varphi}+v^2\partial_y\tilde{\varphi}\big]\rho_t(dy)dt=-\int v_0(y)\tilde{\varphi}(0,y)\rho_0(dy),
            \end{align*}
            which amounts to
            \begin{align*}
                A+\int_0^T& \!\! \int\big\{\partial_t\varphi(t,z)v(t,S_t^{-1}(z))+\partial_z\varphi (t,z)\big[v^2(t,S_t^{-1}(z))-m_0v(t,S_t^{-1}(z))\big]\big\}\bar\rho_t(dz)dt
                \\&
                =-\int v_0(S_0^{-1}(z))\varphi(0,z)\bar \rho_0(dz),
            \end{align*}
            where $A$ is the interaction term
            \begin{align*}A&=-\int_0^T \!\! \iint \! \tilde \varphi(t,y)\mathrm{sgn} (y-z)\rho_t(dz)\rho_t(dy)dt
            =-\int_0^T \!\! \iint \! \varphi(t,S_t(y))\mathrm{sgn}(S_t(y)-S_t(z))\rho_t(dz)\rho_t(dy)dt
            \\&
            =-\int_0^T \!\! \iint \! \varphi(t,y)\mathrm{sgn} (y-z)\bar\rho_t(dz)\bar\rho_t(dy)dt.
            \end{align*}
            Now multiply \eqref{new-cont-eq} by $m_0$ and subtract from the equation above to get the momentum equation for $(\bar\rho,\bar v)$. Since $S_t$ is invertible for all $t\geq0$, it is obvious that the converse holds as well.
            \item See the proof of Proposition 3.1 in \cite{BLPTW}.\qedhere  
        \end{enumerate}
        \end{proof}
        We will often make use of the above proposition without explicit reference; it allows for us to employ physical reasoning about particles in the case of discrete solutions. When doing so, we may say that particles $i$ and $j$ \emph{collide at time $t_0$} to mean $y_i(t_0)=y_j(t_0)$ and $y_i(t)\neq y_j(t)$ for all $0\leq t<t_0$. We call $y_i$ the \emph{trajectory} of the $i$th particle. 

        There is one other relevant property which is obvious: that the evolution of a solution at $(\rho_{t_0}, v_{t_0})$ for $t \ge t_0$ will be identical (but for shifted $t$) to the entire evolution of the solution with initial conditions $(\tilde{\rho}_0=\rho_{t_0}, \tilde{v}_0=v_{t_0})$. This property is established under the name of the \textbf{semigroup property} in \cite{BLPTW}. This is used implicitly when we apply kinematics laws to any collisionless time interval (not just those starting at 0), and will be used explicitly in Lemma \ref{PC1}, Lemma \ref{PC}, and Theorem \ref{perfect_e_and_u}.
    \subsection{Ill-Posedness and Discrete Approximations}
        The repulsive problem is not well posed for some initial conditions. Consider the following:
        \begin{example}\label{ill-posed}
            Let $\varepsilon>0$. The SPS to the repulsive PEP problem with initial condition $(\rho_0^\varepsilon=\frac{1}{2}\delta_{-1-\varepsilon}+\frac{1}{2}\delta_{1+\varepsilon},v_0=-\frac{\sqrt{2}}{2}\sgn)$ is given by 
            \begin{align*}
                \rho_t^\varepsilon=\frac{1}{2}\delta_{-1-\varepsilon+\frac{\sqrt{2}}{2}t-\frac{t^2}{8}}+\frac{1}{2}\delta_{1+\varepsilon-\frac{\sqrt{2}}{2}t+\frac{t^2}{8}}
            \end{align*}
            for all $t\geq0$. 
        \end{example}
        Taking $\varepsilon$ to $0$ in Example \ref{ill-posed}, the initial conditions approach that of Example \ref{attractive-repulsive-comparison}. While the solution depends continuously on the initial conditions for $0\leq t\leq2\sqrt{2}$, we have, for all $t>2\sqrt{2}$, 
        \begin{align*}
            W_p(\rho_t^\varepsilon,\rho_t)=W_p(\rho_t^\varepsilon,\delta_0)=\Bigg(\int|y|^p\rho_t^\varepsilon(dy)\Bigg)^{1/p}=1+\varepsilon-\frac{\sqrt{2}}{2}t+\frac{t^2}{8}. 
        \end{align*}
        That is, for $\varepsilon>0$, $W_p(\rho_t^\varepsilon,\rho_t)$ increases quadratically in time after $t=2\sqrt{2}$. 

        In a similar manner, given a continuous initial distribution, one can construct a sequence of discrete approximating solutions, all of which satisfy the sticky condition, but which converge to a solution to the repulsive PEP equations which \emph{does not} satisfy the sticky condition; see Example \ref{bad-approximation}. 
        \begin{example}\label{bad-approximation}Consider the (continuous) initial conditions $\rho_0=\frac{1}{2}\chi_{[-1,1]}$, $v_0=\Id$. We can approximate this distribution by $\rho_0^{n}=\frac{1}{2n+1}\sum_{i=-n}^n\delta_{(i+\sgn(i))/n}$, $v_0^n(y)=\frac{\sgn(y)}{n}-y$. These discrete approximations give rise to discrete solutions $(\rho_t^n,v_t^n)$ which approximate the distribution
            \begin{align*}
                \rho_t=\begin{cases}\frac{2}{(t-2)^2}\chi_{[-\frac{1}{4}(t-2)^2,\frac{1}{4}(t-2)^2]}, &0\leq t<2,
                \\
                \frac{1}{2}\delta_{-\frac{1}{8}(t-2)^2}+\frac{1}{2}\delta_{\frac{1}{8}(t-2)^2}, &t\geq2.\end{cases}
            \end{align*}
            Note that the conditions in Proposition 3.3 of \cite{Tudorascu-2008} are satisfied by $\rho_t^n$. 
        \end{example}
        \begin{figure}
            \centering
            \begin{tikzpicture}[scale=1]
                \draw[draw=white,fill=black] plot[smooth,samples=100,domain=-1+3:1+3] (\x,{0}) -- plot[smooth,samples=100,domain=1+3:-1+3] (\x,{2-2*sqrt(abs(\x-3))});

                \draw (-2+3,0)--(2+3,0);
                \draw (0+3,-0.25)--(0+3,5);
                \draw (0.2+3,4.8) node {$t$};
                \draw (1.8+3,0.3) node {$y$};
            
                \draw plot[smooth, domain=0+3:0.9+3] (\x, {2.8284271*sqrt(\x-3)+2});
                \draw plot[smooth, domain=-0.9+3:0+3] (\x, {2.8284271*sqrt(-(\x-3))+2});
        
                \draw[fill] (-0.9+3,4.68328) circle [radius=2pt];
                \draw[fill] (0.9+3,4.68328) circle [radius=2pt];
                \draw[fill] (0+3,2) circle [radius=2.828pt];

                \draw(-2,-0.2)--(-2,0.2); 
                \draw(-4,-0.2)--(-4,0.2); 
                \draw(-3-0.08,2)--(-3+0.2,2); 

                \draw (-2,-0.4) node {$\scriptstyle-1$};
                \draw (-4,-0.4) node {$\scriptstyle1$};
                \draw (-3.15,2) node {$\scriptstyle2$};


                \draw (-2-3,0)--(2-3,0);
                \draw (0-3,-0.25)--(0-3,5);
                \draw (0.2-3,4.8) node {$t$};
                \draw (1.8-3,0.3) node {$y$};
            
                \draw plot[smooth, domain=0.2223-3:0.9-3] (\x, {2.3333+sqrt(7*(\x+3)-1.55555)});
                \draw plot[smooth, domain=0.2223-3:1-3] (\x, {2.3333-sqrt(7*(\x+3)-1.55555)});
                \draw (0.22222-3,2.308) -- (0.22222-3,2.36);
                \draw plot[smooth, domain=0.33364-3:1.333333-3] (\x, {2.33333-sqrt(4.6666*(\x+3)-0.777777)});
                \draw plot[smooth, domain=0.33364-3:0.66666-3] (\x, {2.3333-sqrt(14*(\x+3)-3.8888)});
                
                \draw[fill] (0.666666-3,0) circle [radius=1.5pt];
                \draw[fill] (1-3,0) circle [radius=1.5pt];
                \draw[fill] (1.333333-3,0) circle [radius=1.5pt];
                \draw[fill] (0.33364-3,1.4502) circle [radius=2.5pt];

                \draw plot[smooth, domain=-0.9-3:-0.2223-3] (\x, {2.3333+sqrt(-7*(\x+3)-1.55555)});
                \draw plot[smooth, domain=-1-3:-0.2223-3] (\x, {2.3333-sqrt(-7*(\x+3)-1.55555)});
                \draw (-0.22222-3,2.25) -- (-0.22222-3,2.39);
                \draw plot[smooth, domain=-1.333333-3:-0.33364-3] (\x, {2.33333-sqrt(-4.6666*(\x+3)-0.777777)});
                \draw plot[smooth, domain=-0.66666-3:-0.33364-3] (\x, {2.3333-sqrt(-14*(\x+3)-3.8888)});
                
                \draw[fill] (-0.666666-3,0) circle [radius=1.5pt];
                \draw[fill] (-1-3,0) circle [radius=1.5pt];
                \draw[fill] (-1.333333-3,0) circle [radius=1.5pt];
                \draw[fill] (-0.33364-3,1.4502) circle [radius=2.5pt];

                \draw[fill] (0-3,0) circle [radius=1.5pt];

                \draw(2,-0.2)--(2,0.2); 
                \draw(4,-0.2)--(4,0.2); 
                \draw(3-0.2,2)--(3+0.2,2); 

                \draw (2,-0.4) node {$\scriptstyle-1$};
                \draw (4,-0.4) node {$\scriptstyle1$};
                \draw (3.3,2) node {$\scriptstyle2$};
            \end{tikzpicture}

            \caption{Left: The discrete approximation $\rho_t^3$ from Example \ref{bad-approximation}. Right: $\rho_t$ as in Example \ref{bad-approximation}.}
            \label{fig:continuous-split}
        \end{figure}
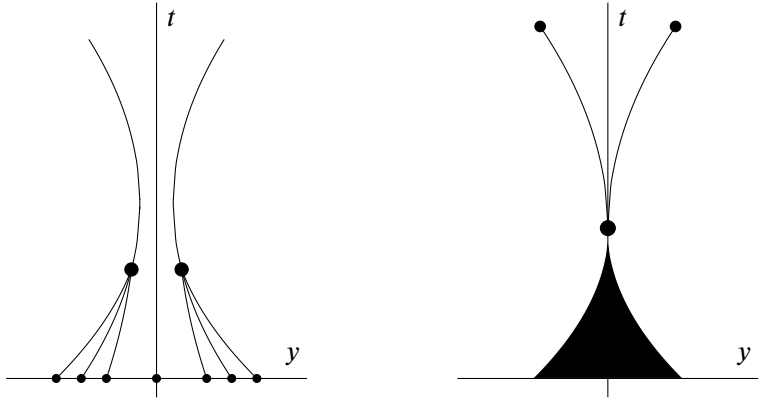

        One may wish to work not with the class of sticky particles solutions, $\mathcal{S}$, but its closure, $\bar{\mathcal{S}}$: solutions which can be obtained by approximations with sticky particles solutions. However, such solutions are not unique, even in the discrete case. 
        \begin{example}
            Consider the initial condition $(\delta_0,0)$. We claim that, while there is only one solution in $\mathcal{S}$ satisfying this condition, there are infinitely many in $\bar{\mathcal{S}}$. Indeed, let $c>0$ and consider the sticky particles solution to the initial conditions 
            \begin{align*}
                \rho_0^n&=\frac{1}{2}\delta_{-\frac{1}{n}}+\frac{1}{2}\delta_\frac{1}{n}, \quad v_0=c\cdot\sgn. 
            \end{align*}
            Taking $n\to\infty$, we see that $W(\rho_0^n,\delta_0)<1/n\to0$. Each choice of $c$ gives a different solution to the repulsive PEP equations, none of which are SPS. 
        \end{example}
        Therefore, we must take care to specify exactly what class of solutions our results apply to:
        \begin{definition}
            The following are the various classes of solutions $(\rho,v)$ which we will work with. 
            \begin{enumerate}
                \item \textbf{Discrete solutions} will refer to solutions for which $\rho_t$ is a linear combination of Dirac measures. By Proposition \ref{properties}, \ref{particle-kinematics}, if $\rho_0$ is discrete, so is $\rho_t$. \textbf{Finite discrete solutions} have only finitely many nonzero masses.  We denote the class of finite discrete solutions by $\mathcal{S}_d$.
                \item $\bar{\mathcal{S}}$ denotes the class of \textbf{Generalized Sticky Particles Solutions}, (GSPS), i.e. solutions $(\rho,v)$ in the sense of distributions
                    such that there exists $\{(\rho^n,v^n)\}_n\subset\mathcal{S}_d$ which converges strongly to $(\rho,v)$. We define this to mean 
                    $$\max_{t\in[0,T]}W_2(\rho_t^n,\rho_t)\rightarrow 0\mbox{ and }v_t^n\rho_t^n\mbox{ converges strongly to }v_t\rho_t\mbox{ for a.e. }t\in[0,T].$$ 
                    This strong convergence is to be understood in the following sense: $v_t^n\rho_t^n$ converges to $v_t\rho_t$ in the sense of distributions and $\|v_t^n\|_{L^2(\rho^n_t)}$ converges to $\|v_t\|_{L^2(\rho_t)}$  for a.e. $t\in[0,T].$
                \item $\mathcal{S}$ will refer to the class of \textbf{Sticky Particles Solutions} (SPS), any solutions in $\bar{\mathcal{S}}$ which satisfy the \textbf{sticky condition}. In the continuous case the sticky condition can be formulated in terms of the optimal maps $N_t$ that push forward the uniform measure on $(0,1)$ to $\rho_t$. It reads
                    $$N_s(a)=N_s(b)\mbox{ for some }s\geq0,\ 0<a<b<1\Rightarrow N_t(a)=N_t(b)\mbox{ for all }t\geq s.$$
            \end{enumerate}
        \end{definition}
        Existence and uniqueness of SPS for the presureless Euler and attractive pressureless Euler-Poisson systems are now well understood and come up as an immediate consequence of the projection formula \eqref{proj-formula} (the quadratic term in $t$ is missing for the PE system) \cite{Natile-Savare}, \cite{Suder-Tudorascu}, \cite{BLPTW}.
        As discussed earlier in this section, such  formula is not available in the repulsive case; for all our proofs we must rely on discrete approximations by solutions in $\mathcal{S}_d$.
        We also note here that in \cite{Suder-Tudorascu} the authors proved that the SPS to PE is also unique among all distributional solutions that satisfy a strong initial continuity of energy condition, plus the so-called Oleinik condition (see \cite{Suder-Tudorascu}). However, it is a simple exercise to show that the SPS to the repulsive equations fail to satisfy the Oleinik condition.
    \subsection{Instantaneous Failure of the Projection Formula}
        Before moving on, we will present an example in which \eqref{proj-formula} breaks instantaneously. Observe that the projection formula does not hold in the repulsive case for $t>t_c=2\sqrt{2}$ in Example \ref{attractive-repulsive-comparison}. The goal here is to create a DSPS with nested situations like this, having $t_c$ arbitrarily small. Indeed, put a particle of mass $1/4$ at $y=1$, mass $1/8$ at $y=1/3$, mass $1/16$ at $y=1/9$, and so on. Mirroring this on the negatives, we have constructed $\rho_0\in\mathcal{P}_2(\mathbb{R})$. The initial acceleration of each particle may be calculated by a geometric series. Then, give each particle just enough velocity to reach the origin, i.e., so that its parabolic trajectory has its vertex at $y=0$.\footnote{See Definition \ref{glancing-perfect} and section \ref{perfect-section} for more on solutions like this.} In all, we obtain the following initial conditions and solution: 
        \begin{example}\label{breaks-instantaneous}
            The DSPS to \eqref{PEP} with initial conditions given by
            \begin{align*}
                \rho_0=\sum_{i=1}^\infty\frac{\delta_{3^{1-i}}+\delta_{-(3^{1-i})}}{2^{i+1}}, \quad\quad v_0(\pm3^{1-i})=\mp\frac{3\sqrt{2}}{2}\big(\sqrt{6}\big)^{-i}
            \end{align*}
            has the distribution (shown in figure \ref{projection-break-figure}) given by 
            \begin{align*}
                \rho_t=\begin{cases}
                    \displaystyle 2^{-i^*(t)}\delta_0\ +\ \sum_{i=1}^{i^*(t)}\frac{\delta_{y_i(t)}+\delta_{-y_i(t)}}{2^{i+1}}, & 0<t<\frac{4\sqrt{3}}{3},
                    \\
                    \delta_0, & t\geq\frac{4\sqrt{3}}{3},
                \end{cases} 
            \end{align*}
            where $\ \ \quad \displaystyle i^*(t)=\bigg\lceil\log_{\frac{\sqrt2}{\sqrt3}}\big(\frac{\sqrt2}{4}t\big)\bigg\rceil-1 \quad$ \ \ and \ \  $\quad\displaystyle y_i(t)=3^{1-i}\ -\ \frac{3\sqrt{2}}{2}\big(\sqrt{6}\big)^{-i}t\ +\ \frac{3}{8}2^{-i}t^2$.
        \end{example}
        Observe that the mirrored particles indexed $i$ reach $y=0$ at time $t_c(i)=2\sqrt{2}\Big(\frac{\sqrt2}{\sqrt3}\Big)^i$. Because $t_c(i)\to0$ as $i\to\infty$, for any $t>0$, the phenomenon we observed in Example \ref{attractive-repulsive-comparison} has already occurred. 

        \begin{figure}
            \centering
            \begin{tikzpicture}[scale=2]
                \draw (-1.1*3,0)--(1.1*3,0);
                \draw (0,-0.25)--(0,2.7);
                \draw (0.03*3,2.6) node {$t$};
                \draw (1.1*3,0.1) node {$y$};
                \draw (-1*3,0.1)--(-1*3,-0.1);
                \draw (-1*3,-0.2) node {$\scriptstyle-1$};
                \draw (1*3,0.1)--(1*3,-0.1);
                \draw (1.*3,-0.2) node {$\scriptstyle1$};

                \draw (-1,0.1)--(-1,-0.1);
                \draw (-1,-0.25) node {$\scriptstyle-\frac{1}{3}$};
                \draw (1,0.1)--(1,-0.1);
                \draw (1.,-0.25) node {$\scriptstyle\frac{1}{3}$};

                \draw[fill] (3*1,0) circle [radius=1pt];
                \draw[fill] (3*1/3,0) circle [radius=0.70710678118pt];
                \draw[fill] (3*1/9,0) circle [radius=0.5pt];
                \draw[fill] (3*1/27,0) circle [radius=0.35355339059pt];
                \draw[fill] (3*1/81,0) circle [radius=0.25pt];
                \draw[fill] (3*1/243,0) circle [radius=0.17677669529pt];
                \draw[fill] (3*1/729,0) circle [radius=0.125pt];
                \draw[fill] (3*1/2187,0) circle [radius=0.08838834764pt];
                \draw[fill] (3*1/6561,0) circle [radius=0.0625pt];

                \draw plot[smooth, variable =\y, domain=0:2.82842*(0.81649^1)] ({3*(sqrt(3)^(1-1)-(\y*sqrt(3)*(sqrt(2)^(-1))/(2*sqrt(2))))^2},{\y});
                \draw plot[smooth, variable =\y, domain=0:2.82842*(0.81649^2)] ({3*(sqrt(3)^(1-2)-(\y*sqrt(3)*(sqrt(2)^(-2))/(2*sqrt(2))))^2},{\y});
                \draw plot[smooth, variable =\y, domain=0:2.82842*(0.81649^3)] ({3*(sqrt(3)^(1-3)-(\y*sqrt(3)*(sqrt(2)^(-3))/(2*sqrt(2))))^2},{\y});
                \draw plot[smooth, variable =\y, domain=0:2.82842*(0.81649^4)] ({3*(sqrt(3)^(1-4)-(\y*sqrt(3)*(sqrt(2)^(-4))/(2*sqrt(2))))^2},{\y});
                \draw plot[smooth, variable =\y, domain=0:2.82842*(0.81649^5)] ({3*(sqrt(3)^(1-5)-(\y*sqrt(3)*(sqrt(2)^(-5))/(2*sqrt(2))))^2},{\y});
                \draw plot[smooth, variable =\y, domain=0:2.82842*(0.81649^6)] ({3*(sqrt(3)^(1-6)-(\y*sqrt(3)*(sqrt(2)^(-6))/(2*sqrt(2))))^2},{\y});
                \draw plot[smooth, variable =\y, domain=0:2.82842*(0.81649^7)] ({3*(sqrt(3)^(1-7)-(\y*sqrt(3)*(sqrt(2)^(-7))/(2*sqrt(2))))^2},{\y});

                
                \draw[fill] (-3*1,0) circle [radius=1pt];
                \draw[fill] (-3*1/3,0) circle [radius=0.70710678118pt];
                \draw[fill] (-3*1/9,0) circle [radius=0.5pt];
                \draw[fill] (-3*1/27,0) circle [radius=0.35355339059pt];
                \draw[fill] (-3*1/81,0) circle [radius=0.25pt];
                \draw[fill] (-3*1/243,0) circle [radius=0.17677669529pt];
                \draw[fill] (-3*1/729,0) circle [radius=0.125pt];
                \draw[fill] (-3*1/2187,0) circle [radius=0.08838834764pt];
                \draw[fill] (-3*1/6561,0) circle [radius=0.0625pt];

                \draw plot[smooth, variable =\y, domain=0:2.82842*(0.81649^1)] ({-3*(sqrt(3)^(1-1)-(\y*sqrt(3)*(sqrt(2)^(-1))/(2*sqrt(2))))^2},{\y});
                \draw plot[smooth, variable =\y, domain=0:2.82842*(0.81649^2)] ({-3*(sqrt(3)^(1-2)-(\y*sqrt(3)*(sqrt(2)^(-2))/(2*sqrt(2))))^2},{\y});
                \draw plot[smooth, variable =\y, domain=0:2.82842*(0.81649^3)] ({-3*(sqrt(3)^(1-3)-(\y*sqrt(3)*(sqrt(2)^(-3))/(2*sqrt(2))))^2},{\y});
                \draw plot[smooth, variable =\y, domain=0:2.82842*(0.81649^4)] ({-3*(sqrt(3)^(1-4)-(\y*sqrt(3)*(sqrt(2)^(-4))/(2*sqrt(2))))^2},{\y});
                \draw plot[smooth, variable =\y, domain=0:2.82842*(0.81649^5)] ({-3*(sqrt(3)^(1-5)-(\y*sqrt(3)*(sqrt(2)^(-5))/(2*sqrt(2))))^2},{\y});
                \draw plot[smooth, variable =\y, domain=0:2.82842*(0.81649^6)] ({-3*(sqrt(3)^(1-6)-(\y*sqrt(3)*(sqrt(2)^(-6))/(2*sqrt(2))))^2},{\y});
                \draw plot[smooth, variable =\y, domain=0:2.82842*(0.81649^7)] ({-3*(sqrt(3)^(1-7)-(\y*sqrt(3)*(sqrt(2)^(-7))/(2*sqrt(2))))^2},{\y});
                

                \draw[fill] (0,2.30940107676) circle [radius=2pt];
                \draw[fill] (0,1.88561808316) circle [radius=1.41421356237pt];
                \draw[fill] (0,1.53960071784) circle [radius=1pt];
                \draw[fill] (0,1.25707872211) circle [radius=0.70710678118pt];
                \draw[fill] (0,1.02640047856) circle [radius=0.5pt];
                \draw[fill] (0,0.838052481406) circle [radius=0.35355339059pt];
                \draw[fill] (0,0.684266985706) circle [radius=0.25pt];
                \draw[fill] (0,0.558701654271) circle [radius=0.17677669529pt];
                \draw[fill] (0,0.456177990471) circle [radius=0.125pt];
                \draw[fill] (0,0.372467769514) circle [radius=0.08838834764pt];
                \draw[fill] (0,0.304118660314) circle [radius=0.0625pt];
                
                \draw[white,fill=white] (0.13,1.885) circle (0.05);
                
                \draw[white,fill=white] (0.3,1.55) circle (0.065);

                \draw(-0.1,2.309)--(0.1,2.309);
                \draw (0.6,2.309) node {$t_c(1)=\frac{4\sqrt{3}}{3}$};

                \draw(-0.1,1.885)--(0.1,1.885);
                \draw (0.25,1.885) node {$\scriptstyle t_c(2)$};

                \draw(-0.1,1.539)--(0.1,1.539);
                \draw (0.25,1.539) node {$\scriptscriptstyle t_c(3)$};

                \draw (0.25,1.25707872211) node {$\scriptstyle\vdots$};

            \end{tikzpicture}

            \caption{The distribution described in Example \ref{breaks-instantaneous}.}
            \label{projection-break-figure}
        \end{figure}
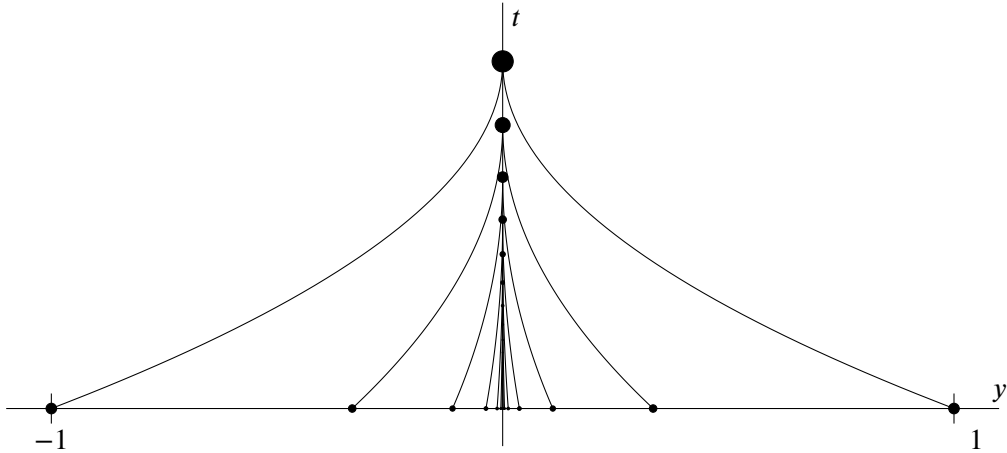   
        
        In the next section, we discuss a useful technique for examining the behavior of discrete solutions. 
    \subsection{Combining Particles in finite DSPS}\label{particle-combining-section}
    The following section will be primarily in service of establishing Lemma \ref{PC}, which will prove a useful tool in several future theorems. It shows that any contained group of particles can be replaced by a single particle at the center of mass of the group with the group's total mass and total momentum, with no change to the relevant behavior of the solution. 
    
    Throughout this section, we will consider a finite DSPS with the distribution $\rho_t=\sum_i m_i y_i(t)$, with $y_1(0)<y_2(0)<\ldots<y_n(0)$, often referring to the \emph{particles} $p_i=(m_i,y_i)$. We will use the convention that $y_i$ tracks the position of a particle after collisions, that is, it becomes equal to the position of the particle formed. We can therefore interpret the sticky condition as follows: 
    $$y_i(s)=y_j(s)\mbox{ for some }s\geq0,\ i\neq j\Rightarrow y_i(t)=y_j(t)\mbox{ for all }t\geq s.$$ Between collisions, the trajectory of a particle can be calculated using \eqref{kinematics}. Thus, the trajectory of a particle is piecewise quadratic. Finally, we will refer to the \textbf{ghost trajectory} of a particle, 
    \begin{align}\label{ghost-trajectory}
        g_i(t)=y_i(0)\ +\ ty_i'(0)\ +\ \frac{t^2}{4}\Big(\sum_{j<i}m_j-\sum_{j>i}m_j\Big).
    \end{align}
    This is the initial parabolic trajectory extrapolated over all time (i.e., the dotted lines in figure \ref{fig:2-particle-ghost}).

        \begin{lemma}\label{PC1}
        
            Consider an initial adjacent $l$ particle subset $P = \{p_q,\ldots,p_{q+l-1}\}$ and let $t_P$ be the maximal time such that for $t<t_P$, particles in $P$ only have collisions with other particles in $P$. Then the trajectory of the center of mass of $P$ until $t_P$, denoted $y_P(t)$, is equal to the trajectory of the center of mass of the ghost trajectories:
        \begin{align*}\label{Y-P}
            y_P(t) &= \frac{\sum_{p_i\in P}m_iy_i(t)}{\sum_{p_i\in P}m_i} = \frac{\sum_{p_i\in P}m_ig_i(t)}{\sum_{p_i\in P}m_i}
            \end{align*}
        \end{lemma}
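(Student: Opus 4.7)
The plan is to show directly that $y_P(t)$ is a single quadratic polynomial on $[0,t_P)$ by computing its second derivative between collisions, showing that the internal forces cancel and the external contribution is constant, and then checking that the initial position and velocity of $y_P$ agree with those of the center of mass of the ghosts $\bar g(t)=M_P^{-1}\sum_{p_i\in P}m_i g_i(t)$, where $M_P=\sum_{p_i\in P}m_i$.

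First I would observe that, because no particle in $P$ collides with any external particle in $[0,t_P)$, the sticky ordering forces every $p_j\notin P$ with $j<q$ to satisfy $y_j(t)<y_i(t)$ for all $p_i\in P$, and symmetrically on the right. Consequently, for each $p_i\in P$ the external contribution to the acceleration, $\tfrac12(L_i^{\text{ext}}(t)-R_i^{\text{ext}}(t))$, is the same time-independent constant $\tfrac12(M_L^{\text{ext}}-M_R^{\text{ext}})$, where $M_L^{\text{ext}}=\sum_{j<q}m_j$ and $M_R^{\text{ext}}=\sum_{j>q+l-1}m_j$.

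Next, on any open interval $(\tau_k,\tau_{k+1})$ between internal collisions, each $y_i$ is a quadratic, so $M_P\,\ddot y_P=\sum_{p_i\in P} m_i\ddot y_i$. The internal part of this sum equals $\tfrac12\sum_{i\neq j,\ p_i,p_j\in P} m_im_j\,\mathrm{sgn}(y_i(t)-y_j(t))$, which vanishes by antisymmetry in $(i,j)$. The external part gives $\tfrac12 M_P(M_L^{\text{ext}}-M_R^{\text{ext}})$. Hence between internal collisions
\begin{equation*}
\ddot y_P(t)=\tfrac{1}{2}\bigl(M_L^{\text{ext}}-M_R^{\text{ext}}\bigr).
\end{equation*}

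The main subtlety, and what I expect to be the only real obstacle, is passing through the internal collision times $\tau_k$: individual velocities $\dot y_i$ jump there, so one must check that $y_P$ remains $C^1$. Continuity of $y_P$ is immediate because each $y_i$ stays continuous through a collision. For continuity of $\dot y_P$, note that $M_P\dot y_P=\sum_{p_i\in P}m_i\dot y_i$ is, after a collision among a subset $Q\subseteq P$ at time $\tau_k$, equal to $\sum_{p_i\in P\setminus Q}m_i\dot y_i+M_Q v_{\text{merged}}$; by momentum conservation $M_Q v_{\text{merged}}(\tau_k^+)=\sum_{p_i\in Q}m_i\dot y_i(\tau_k^-)$, so $\dot y_P$ is continuous at $\tau_k$. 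Combined with the constant second derivative on each open piece, this forces $y_P$ to be a single quadratic on $[0,t_P)$.

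Finally, I would read off $y_P(0)=M_P^{-1}\sum_{p_i\in P}m_i y_i(0)=\bar g(0)$ and $\dot y_P(0)=M_P^{-1}\sum_{p_i\in P}m_i y_i'(0)=\dot{\bar g}(0)$ directly from definitions, and compute $\ddot{\bar g}$ from \eqref{ghost-trajectory}: the sum $\sum_{p_i\in P}m_i(\sum_{j<i}m_j-\sum_{j>i}m_j)$ splits into an internal piece, which vanishes by the same antisymmetry argument, and an external piece equal to $M_P(M_L^{\text{ext}}-M_R^{\text{ext}})$. Hence $\ddot{\bar g}(t)=\tfrac12(M_L^{\text{ext}}-M_R^{\text{ext}})=\ddot y_P(t)$, and since two quadratics with equal values, first derivatives, and second derivatives at $0$ coincide, $y_P(t)=\bar g(t)$ for all $t\in[0,t_P)$.
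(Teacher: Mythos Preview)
Your proof is correct and takes a genuinely different route from the paper. The paper proves the lemma by induction on $l$: at the first internal collision it merges the colliding pair, verifies algebraically that the ghost trajectory of the merged particle equals the mass-weighted average of the two original ghosts, and then invokes the inductive hypothesis on the resulting $(l-1)$-particle subset together with the semigroup property. Your argument instead computes $\ddot y_P$ directly on each collision-free subinterval, observes that the internal interaction terms cancel by antisymmetry while the external contribution is the fixed constant $\tfrac12(M_L^{\text{ext}}-M_R^{\text{ext}})$, and glues the pieces using conservation of momentum to get $C^1$ continuity of $y_P$ across internal collisions; the matching with $\bar g$ then follows by equating two quadratics with the same $0$-jet and constant second derivative. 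Your approach is more elementary and self-contained (no induction, no appeal to the semigroup property), and it makes transparent the physical reason the result holds---internal forces are action/reaction pairs and external forces are constant. The paper's inductive approach, by contrast, dovetails with the recursive ``combine and restart'' viewpoint that drives the subsequent Lemmas~\ref{PC2} and~\ref{PC}. One small point worth making explicit in your write-up: since the ambient system is a finite DSPS, there are only finitely many internal collision times in $[0,t_P)$, so the piecewise argument really does assemble into a single quadratic.
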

    
        \begin{proof}
            We will prove this by an examination of the first collision in $\rho$, allowing us to induct on the number of particles through an application of the semigroup property.
        
            For this and future proofs in this section, we will re-index for ease of notation: let $i=1-(q-1)$, $2-(q-1),\ldots,$ $n-(q-1)$, such that $p_1$ is the first particle of $P$. We proceed by induction on $l$, for which the $1$ particle base case is trivial. Now, assume this holds true for $l=k$, and we will show it for $l=k+1$. If there are no collisions within $P$ for $t < t_P$ the result is trivial, so we assume there is a collision within $P$ before $t_P$. Let $t_c$ be the time of first collision within $P$. We have the result trivially for $t \le t_c$, so it remains to be shown for $t_c < t \le t_P$. Let $p_j$ and $p_{j+1}$ be a pair of particles involved in the first collision in $P$ (If there are multiple collisions at $t_c$, we can choose any adjacent pair which collides).\footnote{One may complain that singling out two particles from the first collision in $P$ may not be accurate as, once the semigroup property is applied, the only composite particle which exists with its own trajectory is that of all particles involved in the first collision. We can address this with strong induction, reducing the system according to the first collision size. However, this argument has the exact same steps as the given, with far denser notation. For this reason, the two particle assumption is allowed.} After collision, these two particles combine into a new particle $\bar p$ with mass $\bar m = m_j + m_{j+1}$. We can calculate its ghost trajectory for $t \ge t_c$, setting our initial condition to $t_c$ (using the semigroup property) and recalling that collisions observe conservation of momentum. Now considering the solution originating at $\tilde{\rho}_0=\rho_{t_c}$, we have $y_{\bar{p}}(0) = y_j(0)=y_{j+1}(0)$, $y'_{\bar{p}}(0) = \frac{m_jy_j'(t) + m_{j+1}y'_{j+1}(t)}{m_j + m_{j+1}}$, and 
            \begin{align*}
                \sum_{i<j}m_i-\sum_{i>j+1}m_i                =\frac{m_j\left(\sum_{i < j}m_i - m_{j+1}-\sum_{i > {j+1}}m_i\right)+m_{j+1}\left(\sum_{i < j}m_i + m_{j}-\sum_{i > {j+1}}m_i\right)}{m_j+m_{j+1}}.
            \end{align*}
            Thus,
            \begin{align*}
                g_{\bar{p}}(t) 
                &= y_{\bar{p}}(0)\ +\ ty'_{\bar{p}}(0)\ +\ \frac{t^2}{4} \left(\sum_{i<j}m_i-\sum_{i>j+1}m_i \right)  
                =\frac{m_jg_j(t) + m_{j+1}g_{j+1}(t)}{m_j + m_{j+1}}. 
            \end{align*}
            Let $\bar P$ be the set of particles in the time-shifted solution which is the same as $P$, except that particles $p_j$ and $p_{j+1}$ are replaced with $\bar p$. $\bar P$ has $k$ particles, so, according to the inductive hypothesis, the trajectory of the center of mass of $\bar P$ for $t \le t_P-t_c$ ($t_P$ in the shifted time) is given by
            \begin{align}
                y_{\bar P}(t) &= \frac{\sum_{p_i \in \bar{P}}m_ig_i(t)}{\sum_{p_i \in \bar{P}}m_i}\nonumber 
                = \frac{\bar m g_{\bar p}(t) - m_jg_j(t) - m_{j+1}g_{j+1}(t) + \sum_{i=1}^{l}m_ig_i(t)}{\bar m - m_j - m_{j+1} + \sum_{i=1}^{l}m_i}\nonumber 
                = \frac{\sum_{i=1}^{l}m_ig_i(t)}{\sum_{i=1}^{l}m_i}\nonumber.
            \end{align}
            That is, in the original time reference, we have this equality for $t_c \le t\le t_P$. But since $P$ and $\bar P$ represent the same sets of particles after $\bar p$ has formed (for $t_c<t\le t_P$) their centers of mass are equal over this time period, so we have, for $t_c \le t \le t_P$, 
            \begin{align*}
                y_{P}(t) &= y_{\bar P}(t) = \frac{\sum_{i=1}^{l}m_ig_i(t)}{\sum_{i=1}^{l}m_i}. \qedhere
            \end{align*}
        \end{proof}
    We can now begin to understand collisions using simplified initial conditions. Using the previous lemma, we now prove that because the group's center of mass is equal to the center of mass of the ghost trajectories, it is also equal (at least until $t_P$) to the trajectory of a single ``center of mass" particle which replaces the entire group from the start. 
    \begin{lemma} \label{PC2}
        Take $P$ and $t_P$ as in Lemma \ref{PC1}. Consider a new $n-l+1$ particle  solution $\rho^*$ which has the same initial conditions as $\rho$, other than that the particles in $P$ have been replaced with a single particle  $p^*$ with initial mass, position, and velocity given by    
        \begin{align*}
            m_{p^*}=\sum_{i=1}^{l}m_i, \quad y_{p^*}(0)=\frac{\sum_{i=1}^{l}y_i(0)m_i}{\sum_{i=1}^{l}m_i}, \quad y'_{p^*}(0)=\frac{\sum_{i=1}^{l}y'_i(0)m_i}{\sum_{i=1}^{l}m_i},
        \end{align*}
        such that $p^*$ is initially located at the center of mass of $P$ with the same total mass and total momentum as $P$. Then, for $t \le t_P$, the trajectory of $p^*$ is equal to the trajectory of the center of mass of P: $y_{p^*}(t)=y_P(t)$.
    \end{lemma}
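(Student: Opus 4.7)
The plan is to identify the free quadratic trajectory of $p^*$ in the new solution $\rho^*$ with the expression for $y_P(t)$ furnished by Lemma \ref{PC1}, and then to argue that no event in $\rho^*$ disrupts this identification before time $t_P$.

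For the first step, I would expand $y_P(t) = m_{p^*}^{-1} \sum_{i=q}^{q+l-1} m_i g_i(t)$ and decompose the acceleration coefficient in each $g_i$ as $\sum_{j<i} m_j - \sum_{j>i} m_j = A + B_i$, where $A = \sum_{j<q} m_j - \sum_{j>q+l-1} m_j$ is the ``external'' contribution and $B_i = \sum_{q\le j<i} m_j - \sum_{i<j\le q+l-1} m_j$ is the ``internal'' one. The weighted internal sum vanishes by antisymmetry, $\sum_i m_i B_i = \sum_{q \le j<i \le q+l-1} m_im_j - \sum_{q \le i<j\le q+l-1} m_im_j = 0$, so $y_P(t) = y_{p^*}(0) + t y'_{p^*}(0) + \tfrac{1}{4} A t^2$. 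On the other hand, applying \eqref{kinematics} to $p^*$ in $\rho^*$ before any of its collisions yields the very same quadratic, since in the reduced ordering the neighbors of $p^*$ are $p_{q-1}$ and $p_{q+l}$ and the mass sums on either side are exactly $\sum_{j<q} m_j$ and $\sum_{j>q+l-1} m_j$. Hence the trajectories coincide as long as $p^*$ moves freely and the external particles behave as in $\rho$.

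For the second step, I would argue by induction on the ordered collision events of $\rho$ occurring in $[0, t_P]$ that both of these conditions persist throughout the interval. The key observation is that every external particle feels exactly the same acceleration in $\rho^*$ as in $\rho$, because $m_{p^*} = \sum_{p_j \in P} m_j$ and $p^*$ occupies the slot previously held by $P$, so the signed mass differences driving each external particle are preserved. Consequently, external--external collisions in $\rho$ occur at the same times in $\rho^*$ and produce the same merged particles (by conservation of mass and momentum), while within-$P$ collisions in $\rho$ have no counterpart in $\rho^*$; by the first step, $y_P(t)$ passes through them smoothly and $p^*$ simply continues along its single quadratic. Finally, I would check that $p^*$ cannot collide with $p_{q-1}$ or $p_{q+l}$ before $t_P$: since $P$ has no external collision in $\rho$ on $[0,t_P)$, we have the strict inequalities $y_{q-1}(t) < y_q(t)$ and $y_{q+l-1}(t) < y_{q+l}(t)$, and stickiness inside $P$ gives $y_q(t) \le y_P(t) \le y_{q+l-1}(t)$; combined with $y_{p^*}(t) = y_P(t)$, this yields $y_{q-1}(t) < y_{p^*}(t) < y_{q+l}(t)$ throughout, ruling out a boundary collision.

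The main obstacle is the interlocking nature of the second-step claims: that external particles match iff $p^*$ stays in its slot, and that $p^*$ stays in its slot iff external particles match. Organizing the induction on the collision events of $\rho$, so that both claims propagate in tandem across each event, is the cleanest way to close the loop without circularity; the algebraic cancellation in the first step is what makes the induction trivially continue through each within-$P$ collision of the original solution.
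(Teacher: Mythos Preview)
Your proposal is correct and follows essentially the same route as the paper: both compute the free quadratic for $p^*$, expand $y_P(t)$ via Lemma~\ref{PC1}, and use the antisymmetric cancellation $\sum_i m_i B_i=0$ to match the two expressions, then invoke the convex-combination bound $y_q(t)\le y_P(t)\le y_{q+l-1}(t)$ to rule out a collision of $p^*$ with its external neighbors. The only difference is organizational: the paper asserts in one sentence that ``the trajectories of particles in $P$ bound $y_{p^*}(t)$'' and that ``$p^*$ affects outside particles the same way as $P$,'' whereas you make the interlocking of these two claims explicit and resolve it by inducting on the collision events of $\rho$; this is a more careful treatment of the same argument rather than a different idea.
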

    \begin{proof}
        Let $k=\sum_{i=1-(q-1)}^{0}m_i-\sum_{i=l+1}^{n-(q-1)}m_i$. Then according to \eqref{kinematics} and the definition of $p^*$, 
        \begin{align*}
            y_{p^*}(t) 
            &=\frac{\sum_{i=1}^{l}m_iy_i(0)}{\sum_{i=1}^{l}m_i}\ +\ t\frac{\sum_{i=1}^{l}m_iy'_i(0)}{\sum_{i=1}^{l}m_i}\ +\ \frac{t^2}{4}k\frac{\sum_{i=1}^{l}m_i}{\sum_{i=1}^{l}m_i} 
            =\frac{\sum_{i=1}^{l}m_i\left(y_i(0)\ +\ y'_i(0)\ +\ \frac{t^2}{4}k\right)}{\sum_{i=1}^{l}m_i}.
        \end{align*}
        It is clear from the definition that the trajectories of particles in $P$ bound $y_{p*}(t)$. 
        Thus, by the assumption that particles in $P$ don't collide with other particles before $t_P$, and since $p^*$ affects outside particles the same way as $P$ (as can be observed from \eqref{kinematics}), we get that $p^*$ doesn't collide with other particles for $t<t_P$, so the above trajectory holds for $t \le t_P$ (we get $t=t_P$ by continuity of position). 
        Now, recalling \eqref{ghost-trajectory} and using Lemma \ref{PC1}, for $t \le t_P$ we have
        \begin{align*}
            y_P(t) &= \frac{\sum_{i=1}^{l}m_ig_i(t)}{\sum_{i=1}^{l}m_i} 
            =\frac{\sum_{i=1}^{l}m_i\left( y_i(0)\ +\ ty'_i(0)\ +\ \frac{t^2}{4}\left( k + \sum_{j=1}^{i-1}m_j-\sum_{j=i+1}^{l}m_j\right)\right)}{\sum_{i=1}^{l}m_i} \\[1em]
            &=\frac{\sum_{i=1}^{l}\left(m_iy_i(0)\ +\ tm_iy'_i(0)\ +\ \frac{t^2}{4}m_i k\right)+\frac{t^2}{4}\sum_{i=1}^{l}m_i\left(\sum_{j=1}^{i-1}m_j-\sum_{j=i+1}^{l}m_j\right)}{\sum_{i=1}^{l}m_i} \\[1em]
            &=\frac{\sum_{i=1}^{l}m_i\left(y_i(0)\ +\ ty'_i(0)\ +\ \frac{t^2}{4}k\right)}{\sum_{i=1}^{l}m_i} = y_{p^*}(t), 
        \end{align*}
        since $\sum_{i=1}^{l}m_i\left(\sum_{j=1}^{i-1}m_j-\sum_{j=i+1}^{l}m_j\right) = \sum_{i=1}^{l}\sum_{j=1}^{i-1}m_im_j-\sum_{i=1}^{l}\sum_{j=i+1}^{l}m_im_j=0$.
    \end{proof}

    We are now ready to prove the primary result of this section. The following lemma demonstrates that the replacement used in the previous lemma leaves the rest of the solution unchanged, and that when the particles in $P$ all combine at $t_P$, the particle they form has the same trajectory as that of $p^*$ at $t_P$, meaning that the combined and uncombined solutions proceed identically following $t_P$ (and therefore they are equivalent in terms of equilibrium).
    \begin{lemma} \label{PC}
        Let $P$ and $t_P$ as in Lemma \ref{PC1} and $\rho^*$ as in Lemma \ref{PC2}. Let $t_Q \le t_P$ be the least such time that $y_1(t_Q) = y_2(t_Q) = \ldots=y_l(t_Q)$. Then the solution $\rho$ is identical for all times $t \ge t_Q$ to $\rho^*$.
    \end{lemma}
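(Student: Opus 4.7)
The plan is to show that at time $t_Q$ the particle configurations of $\rho$ and $\rho^*$ coincide exactly (same particle locations, masses, and velocities), and then invoke the semigroup property together with the deterministic evolution rule \eqref{kinematics} to extend the identity to all $t\ge t_Q$.

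The first and most delicate step is to verify that every outside particle $p_k\notin P$ follows the same trajectory in $\rho$ and in $\rho^*$ on $[0,t_P]$. By the definition of $t_P$, no outside particle crosses into $P$ in $\rho$ before $t_P$, so all of $P$ stays on a fixed side of $p_k$, contributing $\tfrac12 m_{p^*}\sgn(y_{p_k}(t)-y_P(t))$ to the acceleration of $p_k$. In $\rho^*$, the particle $p^*$ sits at $y_{p^*}(t)=y_P(t)$ by Lemma \ref{PC2}; since the center of mass $y_P(t)$ lies strictly within the convex hull of the trajectories in $P$, $p^*$ is on the same side of $p_k$ as all of $P$, and therefore contributes the same acceleration. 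The interactions among outside particles themselves are unchanged and their initial data agree, so a standard bootstrap/uniqueness argument on the piecewise-quadratic dynamics yields agreement of all outside trajectories on $[0,t_P]$.

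Next I would identify the merged particle of $\rho$ at $t_Q$ with $p^*$ at $t_Q$ in $\rho^*$. By the sticky condition all particles of $P$ have coalesced at $t_Q$ into a single particle of mass $m_{p^*}$ at position $y_P(t_Q)$, which agrees with $y_{p^*}(t_Q)$ by Lemma \ref{PC2}. For the velocity, recall that $y_P(t)=\sum_i m_i g_i(t)/m_{p^*}$ is a global quadratic in $t$ and hence $C^1$; its derivative equals the total momentum of $P$ divided by $m_{p^*}$, which is continuous across internal collisions by conservation of momentum. The post-merger velocity of the combined particle is therefore $y_P'(t_Q)=y_{p^*}'(t_Q)$.

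Combining these steps, the weighted particle configurations $\rho_{t_Q}$ and $\rho^*_{t_Q}$ coincide, and by the semigroup property and the deterministic forward evolution of finite DSPS, $\rho_t=\rho^*_t$ for every $t\ge t_Q$. The principal obstacle is the first step: one must simultaneously control that outside particles track each other in the two solutions and that $p^*$ preserves its sidedness relative to every outside particle, which hinges on the \emph{sandwich} property that $y_P(t)$ always lies in the interior of the convex hull of the trajectories in $P$.
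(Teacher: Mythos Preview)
Your proposal is correct and follows essentially the same route as the paper: match the full configurations (positions, masses, velocities) of $\rho$ and $\rho^*$ at $t_Q$ and then invoke uniqueness of finite DSPS via the semigroup property. The paper is a bit more explicit about the boundary case $t_Q = t_P$, where outside particles may be joining the collision at the exact instant $P$ fully coalesces, so the particle formed in $\rho$ at $t_Q$ has mass strictly larger than $m_{p^*}$ and one must verify that its post-collision velocity agrees with that of the corresponding collision in $\rho^*$; your observation that $m_{p^*}\,y_P'(t_Q^-)$ equals the total momentum of $P$ is exactly what is needed here, and the paper spells out that momentum computation as a separate Case~2.
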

    \begin{proof}
        We will prove this in two cases.
        
        {\textit{Case 1: $t_Q < t_P$.}}
        
        Let us examine the solutions at $t=t_Q$. The solutions are identical for particles outside of $P$, since for all $t<t_P$ the mass in $P$ and $p^*$ are equal and are on the same side of every other mass in the solutions. In $\rho$, $P$ has formed a single particle with mass equal to $m_{p^*}$ and trajectory $y_P(t)$, as the center of mass of the system is now just the position of the single particle. By Lemma \ref{PC2}, $y_{p^*}(t)=y_P(t)$ for all $t \le t_P$, and both functions are differentiable on $(0,t_P)$, so we have $y'_P(t_Q)=y'_{p^*}(t_Q)$, meaning that the newly formed particle has velocity at $t_Q$ equal to the velocity of $p^*$ at $t_Q$. Since $\rho$ and $\rho^*$ are identical mass distributions with (a.e.) identical velocity distributions at $t_Q$, existence and uniqueness of DSPS tells us that $\rho$ and $\rho^*$ must be identical for all times $t \ge t_Q$, as otherwise applying the semigroup property at $t_Q$ would yield two different solutions for the same initial condition.
        
        {\textit{Case 2: $t_Q = t_P$.}}
        
        Now we must account for the case in which collision with particles outside of $P$ occurs at the exact time the particles in $P$ first all combine. As before, the solutions are identical outside of $P$ for $t\le t_P$, so the incoming particles not in $P$ which collide with the particles in $P$ at $t_P$ in $\rho$ and collide with $p^*$ at $t_P$ in $\rho^*$ have the same masses and velocities. Clearly the mass and position of the particle formed by this collision is the same in $\rho$ and $\rho^*$, and clearly the solutions are identical outside of this collision at $t_Q$. We need to show that the velocity is the same in either case. Call the particles not in $P$ which collide with the particles in $P$ at $t_Q$ $\tilde p_{1},\ldots,\tilde p_{s}$. Call the distinct $r\leq l$ particles in $P$ just before the collision $\tilde p_{s+1}, \ldots, \tilde p_{s+r}$. This accounts for collisions that have happened in $P$ before $t_Q$. Note that $y_P(t)$ is still the location of the center of mass of $\tilde{p}_{s+1}$, $\ldots$, $\tilde{p}_{s+r}$. Then, conservation of momentum dictates that the velocity of the particle formed by this collision in $\rho$ at time $t_Q$ is
        \begin{align*}
            \frac{\sum_{i=1}^{s+r}\tilde m_i \frac{d^-}{dt}\tilde y_i(t)}{\sum_{i=1}^{s+r}\tilde m_i}.
        \end{align*}
        Now, noting that by Lemma \ref{PC1} we have
        \begin{align*}
            y_{p^*}(t) = y_P(t)=\frac{\sum_{i=s+1}^{s+r}\tilde m_i \tilde y_i(t)}{\sum_{i=s+1}^{s+r}\tilde m_i},
        \end{align*} 
        the velocity of the formed particle in $\rho^*$ is
        \begin{align*}
            \frac{\sum_{i=1}^{s}\tilde m_i \frac{d^-}{dt}\tilde y_i(t)\ +\ m_{p^*}\frac{d^-}{dt}y_{p^*}(t)}{\sum_{i=1}^{s}\tilde m_i\ +\ m_{p*}}
            &=\frac{\sum_{i=1}^{s}\tilde m_i \frac{d^-}{dt}\tilde y_i(t)\ +\ \Big(\sum_{i=s+1}^{s+r} \tilde m_i\Big)\frac{d^-}{dt}\Bigg(\frac{\sum_{i=s+1}^{s+r}\tilde m_i \tilde y_i(t)}{\sum_{i=s+1}^{s+r}\tilde m_i}\Bigg)}{\sum_{i=1}^{s}\tilde m_i
            \ +\ \sum_{i=s+1}^{s+r}\tilde m_i} 
            \\[1em]
            &=\frac{\sum_{i=1}^{s+r}\tilde m_i \frac{d^-}{dt}\tilde y_i(t)}{\sum_{i=1}^{s+r}\tilde m_i}.
        \end{align*}
    Thus, in both $\rho$ and $\rho^*$, at time $t_Q$ the mass and velocity distributions are identical, so just as in case 1 they must be so for all $t \ge t_Q$.
    \end{proof}
    This property is greatly useful; for example, if two particles collide at $t_c$ before colliding with other particles, we can take $P$ to be the two particles and $t_Q = t_P = t_c$ to obtain that the solution in which we initially replace them by the particle at their center of mass with their total mass and mass averaged velocity is identical to the original solution for $t \ge t_c$. This leads to a neat way to calculate whether a system goes to equilibrium: we can repeatedly check which pair of particles will collide first,\footnote{Proposition \ref{properties} can show that this will be the adjacent pair which minimizes the quantity \mbox{$\big(y'_i(0) - y'_{i+1}(0) - \sqrt{(y'_i(0) - y'_{i+1}(0))^2 - (y_{i+1}(0) - y_i(0)(m_i + m_{i+1})}\big)/\big(m_i + m_{i+1}\big)$} such that it is nonnegative.} examine the solution in which these are combined, then check the first collision in this solution and repeat. If we eventually reach a multi-particle solution with no collisions, the original solution is divergent. If we eventually reach a single particle solution, the original solution goes to equilibrium as the previous lemma tells us that it is identical after some time to a single particle.
 
    \subsection{The Hamiltonian}
        \begin{definition}
            The Hamiltonian of a GSPS $(\rho, v)$ is the function 
            \begin{equation}\label{hamiltonian}
                H(\rho_t,v_t)=\|v_t\|_{L^2(\rho_t)}^2-\frac{1}{2}\iint|x-y|\rho_t(dx)\rho_t(dy).
            \end{equation}
        \end{definition}

        The Hamiltonian is our notion of energy for the system. ($||v_t||_{L^2(\rho_t)}^2$ may be thought of as kinetic energy, and the second part as potential.) We will demonstrate that it has the properties one would expect the energy to have. We will also examine its connection to a special type of collisions defined below; these are a phenomenon unique to the repulsive case, as particles in the attractive PEP or PE equations do not exhibit the behavior of slowing down as they approach each other. 
    
        \begin{definition}\label{glancing-perfect} For a DSPS, a collision between particles is called \textbf{glancing} if the trajectories which intersect are tangent at the point of intersection. We call a finite DSPS \textbf{perfect} if it goes to an equilibrium and all collisions between particles are glancing.\end{definition}
        
        Note that a collision is glancing if and only if the trajectories of all the participating particles are differentiable at the collision. See figure \ref{fig:glancing-collisions}. 

        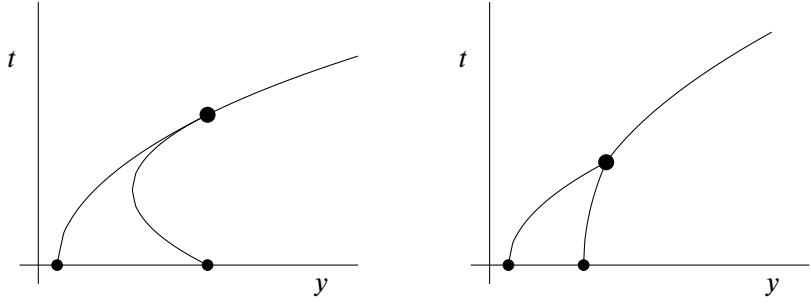
\begin{figure}
            \centering
            \begin{tikzpicture}[scale=1]
                \draw (-1.5-3,0)--(3-3,0);
                \draw (-1.25-3,-0.25)--(-1.25-3,3.5);
                \draw (-1.6-3,2.75) node {$t$};
                \draw (2.5-3,-0.3) node {$y$};
            
                \draw plot[smooth, domain=0-3:1-3] (\x, {sqrt(\x+3)+1});
                \draw plot[smooth, domain=0-3:1-3] (\x, {-sqrt(\x+3)+1});
    
                \draw plot[smooth, domain=-1-3:1-3] (\x, {1.414*sqrt(\x+3+1)});
    
                \draw plot[smooth, domain=1-3:3-3] (\x, {0.5+0.5*sqrt(\x+3)+0.7072*sqrt(\x+3+1)});
    
                \draw[fill] (-1-3,0) circle [radius=2pt];
                \draw[fill] (1-3,0) circle [radius=2pt];
                \draw[fill] (1-3,2) circle [radius=2.828pt];
    
                \*\*\*\*\*\*\*\*\*\*\*\*\*\*\*
    
                \draw (-1.5+3,0)--(3+3,0);
                \draw (-1.25+3,-0.25)--(-1.25+3,3.5);
                \draw (-1.6+3,2.75) node {$t$};
                \draw (2.5+3,-0.3) node {$y$};
            
                \draw plot[smooth, domain=-1+3:3.29938] (\x, {1.2*sqrt(\x-3+1)});
                \draw plot[smooth, domain=0+3:3.29938] (\x, {2.5*sqrt(\x-3)});
    
                \draw plot[smooth, domain=3.29938:2.5+3] (\x, {0.6*sqrt(\x-3+1)+1.25*sqrt(\x-3)});
        
                \draw[fill] (-1+3,0) circle [radius=2pt];
                \draw[fill] (3,0) circle [radius=2pt];
                \draw[fill] (3.29938, 1.36788) circle [radius=2.828pt];
            \end{tikzpicture}
            \caption{Glancing and non-glancing sticky collisions.}
            \label{fig:glancing-collisions}
        \end{figure}
        
        \begin{theorem}\label{energy-constant}
            The Hamiltonian of a DSPS to \eqref{PEP} is piecewise constant, with discontinuities only possibly occurring where non-glancing collisions occur. 
        \end{theorem}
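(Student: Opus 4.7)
The plan is to split the analysis into two parts: verifying $\tfrac{d}{dt}H = 0$ on each collision-free interval, and analyzing the one-sided limits of $H$ at collision times.

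For the first part, on an open time interval containing no collision the ordering of the particles is preserved, so after relabeling we may assume $y_1(t) < y_2(t) < \cdots < y_n(t)$ throughout. Writing $\rho_t = \sum_i m_i \delta_{y_i(t)}$ with $v_t(y_i(t)) = y_i'(t)$, the Hamiltonian becomes
\[ H(\rho_t,v_t) = \sum_i m_i\, y_i'(t)^2 \;-\; \sum_{i<j} m_i m_j \bigl(y_j(t) - y_i(t)\bigr). \]
The kinematic law \eqref{kinematics} gives $y_k''(t) = \tfrac{1}{2}\bigl(\sum_{i<k}m_i - \sum_{j>k}m_j\bigr)$, a constant on the interval. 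Differentiating the kinetic term yields $2\sum_k m_k y_k' y_k''$, while differentiating the potential term and reindexing gives
\[ \sum_{i<j} m_i m_j\,(y_j' - y_i') \;=\; \sum_k m_k y_k' \Bigl(\sum_{i<k}m_i - \sum_{j>k}m_j\Bigr) \;=\; 2\sum_k m_k y_k' y_k'', \]
so the two contributions cancel and $\tfrac{d}{dt}H \equiv 0$ on the interval.

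For the second part, at a collision time $t_c$ every $y_i(\cdot)$ is continuous, so the double-integral term in $H$ is continuous across $t_c$; only the kinetic term can jump. Suppose the collision merges particles indexed by some set $I$ into a single particle of mass $M = \sum_{i\in I} m_i$ with post-collision velocity $V$; if several disjoint clusters merge at the same instant, the analysis below applies cluster by cluster, since cross-cluster contributions to the kinetic term involve only one cluster at a time and the potential piece is already handled. Conservation of momentum gives $V = M^{-1}\sum_{i\in I} m_i\, y_i'(t_c^-)$, so by Cauchy-Schwarz,
\[ MV^2 \;=\; \frac{1}{M}\Bigl(\sum_{i\in I} m_i\, y_i'(t_c^-)\Bigr)^{\!2} \;\le\; \sum_{i\in I} m_i\, y_i'(t_c^-)^2, \]
with equality iff all the values $y_i'(t_c^-)$, $i\in I$, coincide. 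Each $y_i$ is piecewise quadratic, hence its one-sided derivative at $t_c$ is the slope of the tangent line to its trajectory at $(t_c, y_c)$; since all the involved trajectories pass through the single point $(t_c, y_c)$, their tangent lines coincide iff their slopes coincide, i.e., iff the collision is glancing in the sense of Definition \ref{glancing-perfect}. Therefore $H$ is continuous across glancing collisions and strictly decreases across non-glancing ones, which, combined with the constancy from part one, yields the piecewise constancy with the claimed location of discontinuities.

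I do not expect a real obstacle; the only bookkeeping subtlety is handling possibly simultaneous disjoint merging clusters at one collision time, which decouples as noted above. The identification of the Cauchy-Schwarz equality case with the geometric notion of glancing collisions is the one step that requires a brief explicit justification, but it is immediate from the piecewise-quadratic structure of the trajectories.
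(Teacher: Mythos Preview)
Your proof is correct. The first part---constancy on collision-free intervals via direct differentiation---is exactly what the paper does. For the second part, however, you take a slightly different and more explicit route than the paper's own proof of this theorem.

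The paper argues abstractly: since $H$ is a smooth function of the $y_i$ and $y_i'$, and since each $y_i'$ is differentiable precisely where no non-glancing collision occurs, $H$ itself is differentiable (with derivative zero, by the right-derivative computation) away from non-glancing collisions; hence any discontinuity must sit at a non-glancing collision. Your approach instead computes the jump directly: continuity of positions forces continuity of the potential piece, and Cauchy--Schwarz on the kinetic piece gives continuity if and only if the merging velocities agree. This is precisely the content the paper defers to the subsequent Lemma~\ref{PE-continuity} and Proposition~\ref{KE-discontinuity}; you have effectively folded those later results into the proof of the theorem itself. The payoff is that you actually establish the stronger statement that $H$ \emph{strictly decreases} across non-glancing collisions, which the paper only obtains after assembling the three results into Theorem~\ref{DSPS-Hamiltonian}. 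The paper's route is terser for the theorem as literally stated; yours is more self-contained and yields more at once.
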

        \begin{proof}
            Let $\rho_t=\sum_im_i\delta_{y_i(t)}$. Formally, consider the right derivative of the Hamiltonian. 
            \begin{align}\nonumber
                \frac{d^+}{dt}H(\rho_t,v_t) =& \frac{d^+}{dt}\sum_{i}m_i\big(y_i'(t)\big)^2-\frac{d^+}{dt}\sum_{i}m_i\sum_{\{j|y_j(t)<y_i(t)\}}m_j\big(y_i(t)-y_j(t)\big), 
            \end{align}
            where here, $y_i'$ denotes $\frac{d^{+}}{dt}y_i$. Since we are taking the right derivative, we can equate the acceleration of each trajectory $y_i$ to $\big(\sum_{\{j|y_j<y_i\}}m_j-\sum_{\{j|y_j>y_i\}}m_j\big)/2$. Noting the absolute convergence of each sum,
            \begin{align}
                \frac{d^+}{dt}H(\rho,v) =& \sum_{i}2m_iy_i'a_i-\sum_{i}m_i\sum_{\{j|y_j<y_i\}}m_j(y_i'-y_j')\nonumber
                \\
                =& \sum_{i}m_i\bigg(y_i'\big(\sum_{\{j|y_j<y_i\}}m_j\ -\sum_{\{j|y_j>y_i\}}m_j\big)-y_i'\sum_{\{j|y_j<y_i\}}m_j\ +\sum_{\{j|y_j<y_i\}}m_jy_j'\bigg)\nonumber
                \\
                =& \sum_{i}m_i\bigg(-y_i'\sum_{\{j|y_j>y_i\}}m_j\ +\sum_{\{j|y_j<y_i\}}m_jy_j'
                \bigg).\nonumber
            \end{align}
            Changing indices, justified by the finite momentum of the solution, 
            \begin{align}\nonumber
                \frac{d^+}{dt}H(\rho_t,v_t)=-\sum_{j}m_j\sum_{\{i|y_i<y_j\}}m_iy_i'(t)\ +\ \sum_{i}m_i\sum_{\{j|y_j<y_i\}}m_jy_j'(t)=0. 
            \end{align}
            Therefore, at each time $t$, either the derivative of $H(\rho_t,v_t)$ is $0$, in which case $H(\rho_t,v_t)$ is constant, or $H(\rho_t,v_t)$ is not differentiable at time $t$; however, this can only happen if $y_i(t)$ is not differentiable for some $i$, meaning there is a non-glancing collision!
        \end{proof}
        
        It is clear that $H(\delta_0,0)=0$. Furthermore, it is easy to show that given $(\rho,v)$, if we take $(\bar{\rho},\bar{v})$ as in Proposition \ref{properties}, \ref{galilean-invariance}, then $H(\rho_t,v_t)=H(\bar{\rho}_t,\bar{v}_t)+m_0^2$. Since we have now proved that $H(\rho_t,v_t)$ is constant both between collisions and at glancing collisions, we obtain the following corollary.\footnote{Finite DSPS have finitely many collisions, and thus can collapse to equilibrium only in finite time.}
        
        \begin{corollary}\label{energy-perfect}
            Let $(\rho,v)$ be a finite DSPS. If $(\rho,v)$ is perfect, then $H(\rho_t,v_t)=m_0^2$ for all $t\geq0$. 
        \end{corollary}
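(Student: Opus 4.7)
The plan is to chain together the three ingredients already stated in the paragraph preceding the corollary: piecewise constancy of $H$ from Theorem \ref{energy-constant}, the transformation rule $H(\rho_t,v_t)=H(\bar\rho_t,\bar v_t)+m_0^2$ coming from Proposition \ref{properties}\ref{galilean-invariance}, and the fact (footnote) that a finite perfect DSPS reaches equilibrium in finite time.

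First, pass to the Galilean-reduced solution $(\bar\rho_t,\bar v_t)$ with zero center of mass and zero total momentum. Because the shift $S_t(y)=y-Y_0-tm_0$ is an affine reparametrization of trajectories in the $(t,y)$-plane, it preserves differentiability at collision times: a collision is glancing for $(\rho,v)$ iff the corresponding collision is glancing for $(\bar\rho,\bar v)$. Also, convergence to equilibrium for $(\rho,v)$ corresponds to convergence to the unique reduced equilibrium $(\delta_0,0)$ by Proposition \ref{properties}\ref{only-equilibrium}. Thus $(\bar\rho,\bar v)$ is itself a perfect finite DSPS, and a direct computation gives $H(\rho_t,v_t)=H(\bar\rho_t,\bar v_t)+m_0^2$ since $\int\bar v_t^2\,d\bar\rho_t=\|v_t\|_{L^2(\rho_t)}^2-m_0^2$ and the double integral $\iint|x-y|\,d\rho_t\,d\rho_t$ is invariant under a common translation.

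Next, apply Theorem \ref{energy-constant} to $(\bar\rho,\bar v)$: the Hamiltonian is piecewise constant with jumps allowed only at non-glancing collisions, and perfection rules out any such jumps. Hence $H(\bar\rho_t,\bar v_t)$ is constant on $[0,\infty)$. By the footnote cited with the corollary, the finite perfect DSPS collapses to $(\delta_0,0)$ after some finite time $T$, at which point $H(\bar\rho_T,\bar v_T)=0$. Constancy then forces $H(\bar\rho_t,\bar v_t)\equiv 0$, and translating back gives $H(\rho_t,v_t)\equiv m_0^2$.

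The only genuine thing to check is that the constancy of $H$ in Theorem \ref{energy-constant} is used across the full interval $[0,\infty)$, not just on the finite window up to the last collision; but since after the last collision we have a single stationary particle (in the reduced frame) with $H=0$, and $H$ cannot jump at that last collision either (it is glancing), the value $0$ propagates backward in time to $t=0$. There is no real obstacle; the corollary is essentially a bookkeeping consequence of the previous theorem plus Galilean invariance.
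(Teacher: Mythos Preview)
Your proof is correct and follows essentially the same route as the paper: use Theorem \ref{energy-constant} to get constancy of $H$ (since perfection excludes non-glancing collisions), invoke finite-time collapse of a finite DSPS, and compute the constant value via the Galilean relation $H(\rho_t,v_t)=H(\bar\rho_t,\bar v_t)+m_0^2$ together with $H(\delta_0,0)=0$. The only difference is cosmetic: you apply Theorem \ref{energy-constant} to the reduced solution $(\bar\rho,\bar v)$ after first checking that perfection is preserved under the affine shift, whereas the paper applies constancy directly to $(\rho,v)$ and uses the Galilean identity only to evaluate the constant; both are valid and amount to the same argument.
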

        \begin{lemma}\label{PE-continuity}
            The potential energy, 
            \begin{align}
                U(\rho_t)=-\frac{1}{2}\iint|x-y|\rho_t(dx)\rho_t(dy), 
            \end{align}
            is everywhere continuous in $t$ for any finite DSPS $(\rho,v)$ to the repulsive PEP equations. 
        \end{lemma}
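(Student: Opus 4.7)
The plan is to exploit the fact that a finite DSPS has, by definition, only finitely many mass atoms tracked by continuous trajectories, so the double integral collapses to a finite sum of continuous functions. Writing $\rho_t = \sum_{i=1}^n m_i \delta_{y_i(t)}$ with the convention (already in force throughout Section 2.5) that $y_i$ tracks particle $i$ and, after a collision, follows the amalgamated particle, I compute
\begin{equation*}
U(\rho_t) = -\frac{1}{2}\iint |x-y|\,\rho_t(dx)\rho_t(dy) = -\frac{1}{2}\sum_{i=1}^n\sum_{j=1}^n m_i m_j\,|y_i(t) - y_j(t)|.
\end{equation*}
I need to check this representation is consistent across collisions: if particles $i_1,\ldots,i_k$ have merged by time $t$, then $y_{i_1}(t)=\cdots=y_{i_k}(t)$ is their common location, and the corresponding summands vanish because $|y_{i_a}-y_{i_b}|=0$, while the coefficient of $|y_{i_a}(t)-y_\ell(t)|$ for any $\ell$ outside the cluster is $m_{i_a}m_\ell$, so summing over the cluster yields $(\sum_a m_{i_a})m_\ell\,|y^*(t)-y_\ell(t)|$, which matches the double integral against the merged delta of mass $\sum_a m_{i_a}$ at $y^*(t)$.

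Next I would argue that each trajectory $y_i:[0,\infty)\to\mathbb{R}$ is continuous. Between collisions this is immediate from the kinematic law \eqref{kinematics}, which gives a quadratic formula. At collision times, continuity is built into our convention: the new particle is born at the common location where the colliding trajectories meet, so the left and right limits agree. (Only the derivative can jump, per the non-glancing case discussed in Theorem \ref{energy-constant}.) Hence each $t\mapsto |y_i(t)-y_j(t)|$ is continuous as a composition of continuous functions.

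Finally, since $U(\rho_t)$ is a finite linear combination (with $n^2$ terms and fixed coefficients $-\tfrac{1}{2}m_im_j$) of the continuous functions $|y_i(t)-y_j(t)|$, it is itself continuous on $[0,\infty)$. I do not anticipate any real obstacle here; the only point requiring a line of care is verifying that the finite-sum representation of $U$ is valid through collision times, i.e., that the bookkeeping of collided-particle indices does not introduce spurious jumps. The finiteness of the system (and hence finiteness of collisions) is essential: the same argument would break down for infinite DSPS such as in Example \ref{breaks-instantaneous}, where one would need an additional uniform summability estimate to pass the continuity statement through to the infinite double sum.
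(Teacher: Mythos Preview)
Your proof is correct and follows essentially the same route as the paper's: both reduce $U(\rho_t)$ to a finite double sum over the particle trajectories and then invoke the continuity of each $y_i$. The paper carries out an explicit $\varepsilon$--$\delta$ estimate (using the ordering $y_1\le\cdots\le y_n$ to drop the absolute values), whereas you more directly observe that a finite linear combination of the continuous functions $|y_i(t)-y_j(t)|$ is continuous; your extra line verifying that the finite-sum representation remains valid through collision times is a nice point of care that the paper leaves implicit.
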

        \begin{proof}
            Let $\varepsilon>0$. Fix $t_0\geq0$ and consider a difference in potential energy for a discrete finite solution $\rho_t=\sum_{i=1}^n m_i\delta_{y_i(t)}$, $\sum_{i=1}^nm_i=1$, with $\{y_i(t)\}_{i=1}^n$ an increasing sequence for any $t$. By the continuity of each $y_i$, $|y_i(t_0)-y_i(t)|<\varepsilon/2$ when $|t_0-t|<\eta_i$ for some $\eta_i>0$. Take $\eta=\min\{\eta_i\}_{i=1}^n$. For $|t_0-t|<\eta$,
            \begin{align*}
                \big|U(\rho_t)-U(\rho_{t_0})\big|=\ &\Big|\sum_{i=1}^n m_i\sum_{j<i}m_j\big(y_i(t_0)-y_j(t_0)\big)-\sum_{i=1}^n m_i\sum_{j<i}m_j\big(y_i(t)-y_j(t)\big)\Big|
                \\
                =\ &\Big|\sum_{i=1}^n m_i\sum_{j<i}m_j\big(y_i(t_0)-y_i(t)\big)+\sum_{i=1}^n m_i\sum_{j<i}m_j\big(y_j(t)-y_j(t_0)\big)\Big|
                \\
                \leq\ &\sum_{i=1}^n m_i\sum_{j<i}m_j\big|y_i(t_0)-y_i(t)\big|+\sum_{i=1}^n m_i\sum_{j<i}m_j\big|y_j(t_0)-y_j(t)\big|
                \\
                <\ &\frac{\varepsilon}{2}\sum_{i=1}^nm_i\sum_{j<i}m_j+\frac{\varepsilon}{2}\sum_{i=1}^nm_i\sum_{j<i}m_j\leq\ \varepsilon\sum_{i=1}^nm_i\sum_{j=1}^nm_j=\varepsilon. \qedhere
            \end{align*}
        \end{proof}
        \begin{proposition}\label{KE-discontinuity}
            For any finite DSPS to the repulsive PEP system, at collisions between particles, the kinetic energy $\|v_t\|_{L^2(\rho_t)}^2$ does not increase (in the sense that $\lim_{t\to {t_0}^-}\|v_t\|^2_{L^2(\rho_t)}\geq\lim_{t\to {t_0}^+}\|v_t\|^2_{L^2(\rho_t)}$), and remains constant if and only if the collision is glancing. That is, at non-glancing collisions, $\|v_t\|_{L^2(\rho_t)}^2$ exhibits a decreasing jump discontinuity. 
        \end{proposition}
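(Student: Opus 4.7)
The plan is to decompose the kinetic energy as $K(t) := \|v_t\|^2_{L^2(\rho_t)} = H(\rho_t,v_t) - U(\rho_t)$ and read off continuity from the two established results. By Lemma \ref{PE-continuity}, $U$ is continuous in $t$ everywhere, and by Theorem \ref{energy-constant}, $H$ is piecewise constant with possible discontinuities only at non-glancing collisions. Subtracting, $K$ is continuous at every time except possibly at a non-glancing collision; in particular this already yields half of the claim, namely that $K$ is unchanged across glancing collisions (and of course across any non-collision time).

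It remains to prove that a non-glancing collision produces a \emph{strictly decreasing} jump of $K$. I would argue locally, one spatial collision site at a time. Suppose particles $p_{i_1},\dots,p_{i_k}$ meet at position $y_0$ at time $t_0$; all particles not at $y_0$ retain differentiable trajectories at $t_0$ and therefore contribute continuously to $K$. The sticky rule together with conservation of momentum produces a merged particle of mass $M = \sum_j m_{i_j}$ with post-collision velocity $V = M^{-1}\sum_j m_{i_j}\, y'_{i_j}(t_0^-)$. Hence the jump contributed by this site equals
\begin{equation}
MV^2 - \sum_j m_{i_j}\bigl(y'_{i_j}(t_0^-)\bigr)^2 = \frac{1}{M}\Bigl(\sum_j m_{i_j}\, y'_{i_j}(t_0^-)\Bigr)^2 - \sum_j m_{i_j}\bigl(y'_{i_j}(t_0^-)\bigr)^2.
\end{equation}

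By Cauchy--Schwarz applied to $a_j = \sqrt{m_{i_j}}$ and $b_j = \sqrt{m_{i_j}}\, y'_{i_j}(t_0^-)$ (equivalently, Jensen's inequality for $x\mapsto x^2$ against the probability weights $m_{i_j}/M$), the displayed quantity is $\le 0$, with equality iff all the pre-collision velocities $y'_{i_j}(t_0^-)$ coincide. The latter condition is precisely that the left-tangents of the trajectories $y_{i_1},\dots,y_{i_k}$ at the meeting point $(t_0,y_0)$ have a common slope, i.e.\ that the collision is glancing. Thus a non-glancing collision strictly decreases $K$. Simultaneous collisions at distinct spatial sites are handled independently and summed, since particles at one site exchange no momentum with those at another at the single instant $t_0$.

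The main conceptual step is the identification of ``glancing'' with the equality case of Cauchy--Schwarz, and the rest is essentially book-keeping: Theorem \ref{energy-constant} and Lemma \ref{PE-continuity} dispense with all continuity issues away from non-glancing collisions, and conservation of momentum encodes the sticky rule into a convex-combination identity that is tailor-made for Jensen. I do not anticipate any significant obstacle beyond checking that simultaneous multi-site collisions do not interfere, which follows from the finiteness of the discrete system and the spatial separation of the sites.
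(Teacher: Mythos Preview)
Your proposal is correct and follows essentially the same approach as the paper: the core step in both is the Cauchy--Schwarz (equivalently Jensen) inequality applied to $\sqrt{m_{i_j}}$ and $\sqrt{m_{i_j}}\,y'_{i_j}(t_0^-)$, with the equality case identified as the glancing condition. Your preliminary use of the decomposition $K = H - U$ together with Theorem~\ref{energy-constant} and Lemma~\ref{PE-continuity} to dispose of the glancing case is a harmless organizational shortcut; the paper instead reads off both the glancing and non-glancing cases from the single Cauchy--Schwarz computation.
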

        \begin{proof}
            Let $t_0$ be any time of collision, and suppose the particles of trajectories $\{y_i(t)\}_{i=1}^n$ and masses $\{m_i\}_{i=1}^n$ collide to form particles of trajectories $\{Y_{A_j}(t)\}_{j=1}^N$ where $\{A_j\}_{j=1}^N$ are disjoint and $\bigcup_{j=1}^N A_j=\{1,2,\ldots,n\}$. Consider the kinetic energy at $t_0$. Without loss of generality, suppose that $y_1(t)\leq y_2(t)\leq\ldots\leq y_n(t)$ for all $t<t_0$. Throughout this proof, denote $y_i'=\frac{d^{-}}{dt}y_i$.\footnote{Because there are only finitely many collisions, $y_i$ is left- and right-differentiable.}
            \begin{align*}
                \lim_{t\to t_0^{\ -}}\|v_t\|_{L^2(\rho_t)}^2=\sum_{i=1}^n m_i\big(y_i'(t_0)\big)^2
                =\sum_{j=1}^N\sum_{i\in A_j}m_i\big(y_i'(t_0)\big)^2,
            \end{align*}
            and, appealing to the conservation of momentum,  
            \begin{align*}
                \lim_{t\to t_0^{\ +}}\|v_t\|_{L^2(\rho_t)}^2=&\sum_{j=1}^N\Big(\sum_{i\in{A_j}}m_i\Big)\Big(\frac{d^+Y_{A_j}}{dt}(t_0)\Big)^2= \sum_{j=1}^N\frac{\Big(\sum_{i\in A_j}m_iy_i'(t_0)\Big)^2}{\sum_{i\in A_j}m_i}. 
            \end{align*}
            The Cauchy-Schwarz inequality applied to $\{\sqrt{m_i}y_i'(t_0)\}_{i\in A_j}$ and $\{\sqrt{m_i}\}_{i\in A_j}$ gives  
            \begin{align*}
                \Big(\sum_{i\in A_j}m_iy_i'(t_0)\Big)^2\leq\bigg(\sum_{i\in A_j}m_i\big(y_i'(t_0)\big)^2\bigg)\sum_{i\in A_j}m_i.
            \end{align*}
            Therefore, $\lim_{t\to t_0^{\ -}}\|v_t\|_{L^2(\rho_t)}^2\geq\lim_{t\to t_0^{\ +}}\|v_t\|_{L^2(\rho_t)}^2$, as desired. 
    
            Note that equality holds in the Cauchy-Schwarz inequality if and only if one vector is a scalar multiple of another; that is, if and only if $\{\sqrt{m_i}y'_i(t_0)\}=\{k\sqrt{m_i}\}$ for all $i$. Therefore, we have equality if and only if all velocities are the same, i.e., the collision is glancing. 
        \end{proof}
        \begin{theorem}\label{DSPS-Hamiltonian}
            The Hamiltonian of any finite DSPS $(\rho,v)$, $\rho\in\mathcal{P}_2(\mathbb{R})$, $v\in L^2(\rho)$, to the repulsive PEP equations is nonincreasing as a function of time. 
        \end{theorem}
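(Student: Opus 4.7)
The plan is to assemble the three preceding results -- Theorem \ref{energy-constant}, Lemma \ref{PE-continuity}, and Proposition \ref{KE-discontinuity} -- into a monotonicity statement by analyzing $H(\rho_t, v_t)$ on the complement of the finite collision set and at each collision time separately.

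First, since $(\rho, v)$ is a finite DSPS, the set of collision times $\{t_1 < t_2 < \cdots < t_K\}$ is finite. On each open interval $(t_k, t_{k+1})$ (with the convention $t_0 = 0$, $t_{K+1} = +\infty$), no collisions occur, so by Theorem \ref{energy-constant} the Hamiltonian $H(\rho_t, v_t)$ is constant on that interval. Thus it suffices to compare the value of $H$ across each collision time $t_k$, i.e.\ to show
\[
\lim_{t \to t_k^+} H(\rho_t, v_t) \le \lim_{t \to t_k^-} H(\rho_t, v_t).
\]

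Next, I would split $H = \|v_t\|_{L^2(\rho_t)}^2 + U(\rho_t)$ and handle the two pieces separately. Lemma \ref{PE-continuity} gives that $U(\rho_t)$ is continuous at $t_k$, so it contributes nothing to the jump. For the kinetic term, Proposition \ref{KE-discontinuity} says that at any collision time, the kinetic energy is nonincreasing in the one-sided sense $\lim_{t \to t_k^-} \|v_t\|_{L^2(\rho_t)}^2 \ge \lim_{t \to t_k^+} \|v_t\|_{L^2(\rho_t)}^2$, with equality precisely when the collision is glancing. Combining these, at a glancing collision $H$ is continuous (hence unchanged), and at a non-glancing collision $H$ strictly decreases.

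Putting the pieces together: on each collisionless interval $H$ is constant, and at each of the finitely many collision times $H$ either stays the same (glancing) or jumps down (non-glancing). Therefore $H(\rho_t, v_t)$ is nonincreasing in $t$. I do not expect a significant obstacle; the only thing to be slightly careful about is that one-sided limits are what both Theorem \ref{energy-constant} and Proposition \ref{KE-discontinuity} concern, and that the finite-DSPS hypothesis guarantees we really only have finitely many points of potential discontinuity to check.
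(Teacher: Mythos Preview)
Your proposal is correct and follows essentially the same route as the paper: both arguments use Theorem \ref{energy-constant} to handle the collisionless intervals, Lemma \ref{PE-continuity} for continuity of the potential, and Proposition \ref{KE-discontinuity} for the nonincrease of kinetic energy at collisions. Your version is in fact slightly more direct than the paper's, which wraps the same ingredients in an unnecessary proof by contradiction.
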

        \begin{proof} 
            By Theorem \ref{energy-constant}, the Hamiltonian is constant between non-glancing collisions (so it is trivially nonincreasing). Therefore, it remains to show the Hamiltonian is nonincreasing at non-glancing collisions. 
            
            Consider the potential energy, $U(\rho_t)$. In the case of a non-glancing collision, Proposition \ref{KE-discontinuity} tells us that $\|v_t\|_{L^2(\rho_t)}^2$ exhibits a decreasing jump discontinuity. Assume for contradiction that $H(\rho_t,v_t)=\|v_t\|_{L^2(\rho_t)}^2+U(\rho_t)$ increases at the time of a non-glancing collision, $t_0$. Since we are considering a finite number of nonzero masses, we have some $\varepsilon>0$ such that there are no other collisions in $(t_0-\varepsilon,t_0+\varepsilon)$, so the kinetic energy is continuous other than the jump discontinuity at $t_0$. 
            However, in order for the Hamiltonian to increase, potential energy then needs to exhibit an increasing jump discontinuity, violating its continuity; thus, it is not possible. 
        \end{proof}
        The results in this section allow us to reframe the notion of perfect solutions in the case of finite DSPS, and thus, extend our notion of a perfect solution beyond the discrete case: 
        \begin{definition}\label{perfect-generalization}
            We call a GSPS $(\rho_t,v_t)$ with $0$ total momentum to the repulsive PEP system \textbf{perfect} if it tends to an equilibrium and for all $t\geq0$, $t\mapsto H(\rho_t,v_t)$ is constant. 
        \end{definition}
        For a general GSPS, we can say that $(\rho,v)$ is perfect so long as $(\bar{\rho},\bar{v})$ in the sense of Proposition \ref{properties}, \ref{galilean-invariance} is perfect. In the case of finite time collapse, note that the constant above is the square of the total momentum of the solution.\footnote{In the case of asymptotic equilibrium, one would expect that, since (as we will see) $U(\rho_t)\to0$, $||v_t||_{L^2(\rho_t)}\to||v_\infty||_{L^2(\rho_\infty)}$, i.e., if a solution collapses to a stationary mass, the kinetic energy goes to 0. However, we allow also for the possibility that some solutions may exhibit oscillatory behavior such that $W(\rho_t,\delta_0)\to0$, but $||v_t||_{L^2(\rho_t)}>\varepsilon>0$ for all $t\geq0$. Such an example is yet to be constructed.}  
        
        To extend the results of the Hamiltonian to continuous distributions, we will need the following framework for approximating GSPS. 
    \subsection{Strong Convergence of Discrete Approximations to GSPS}
        Given our wealth of tools for studying discrete measures, it is natural that we should want some mechanism to extend discrete results to more general measures.
        The following theorem gives the conditions under which a series of discrete approximations converge appropriately to a more general measure. The proof makes use of our results on the Hamiltonian, as well as crucial results from Proposition 3.3 in \cite{Tudorascu-2008}, in order to extract appropriate bounds, approximations, and subsequences to obtain the result. 
    
        \begin{theorem} Let $\rho_0\in\mathcal{P}_2(\mathbb{R}), v_0\in C(\mathbb{R})$ of at most quadratic growth and consider a sequence of discrete approximations $\{\rho_0^n\}_n$ to $\rho_0$ as in Proposition 3.3 in \cite{Tudorascu-2008}. Let $(\rho,v)$ be the GSPS corresponding to $(\rho_0,v_0)$ as constructed in the proof of Proposition 3.3 \cite{Tudorascu-2008} based on the approximations $\{\rho_0^n\}_n$.  Then, for any $T>0$, $\max_{t\in[0,T]}W_2(\rho_t^n,\rho_t)\to0$ and, possibly up to a subsequence, $\{v_t^{n}\rho_t^{n}\}_n$ converges strongly to $v_t\rho_t$ for a.e. $t\geq0$ in the sense that 
            \begin{align}
                \int v_t^n\varphi d\rho_t^n\to\int v_t\varphi d\rho_t
            \end{align}
            for all $\varphi\in C_c^{\infty}(\mathbb{R})$ and
            \begin{align}
                \int|v_t^n|^2d\rho_t^n\to\int|v_t|^2d\rho_t
            \end{align}
            for a.e. $t\geq0$. 
        \end{theorem}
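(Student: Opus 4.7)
The plan combines three ingredients: the weak convergence supplied by Proposition 3.3 of \cite{Tudorascu-2008}, the Hamiltonian monotonicity (Theorem \ref{DSPS-Hamiltonian}) to bound kinetic energy uniformly, and a momentum-equation identity that identifies the limit of $\int_0^t\|v_s^n\|^2_{L^2(\rho_s^n)}\,ds$. First I derive a uniform-in-$n$ kinetic-energy bound. From Theorem \ref{DSPS-Hamiltonian} one has $\|v_t^n\|_{L^2(\rho_t^n)}^2\leq H^n(0)-U(\rho_t^n)$. Bounding $|U(\mu)|\leq\sqrt{M_2(\mu)}$ and combining with the second-moment estimate $\sqrt{M_2(\rho_t^n)}\leq\sqrt{M_2(\rho_0^n)}+\int_0^t\|v_s^n\|_{L^2(\rho_s^n)}\,ds$ from the continuity equation, a Gronwall-type bootstrap yields $\|v_t^n\|_{L^2(\rho_t^n)}\leq C(T)$ on $[0,T]$. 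The absolute-continuity bound $W_2(\rho_s^n,\rho_t^n)^2\leq|t-s|\int_s^t\|v_r^n\|_{L^2(\rho_r^n)}^2\,dr$ then makes $\{t\mapsto\rho_t^n\}$ equicontinuous in $W_2$, so Arzela--Ascoli combined with the pointwise $W_2$-convergence from \cite{Tudorascu-2008} delivers $\max_{t\in[0,T]}W_2(\rho_t^n,\rho_t)\to 0$. Since $U$ is $2$-Lipschitz in $W_1\leq W_2$, it follows that $U(\rho_t^n)\to U(\rho_t)$ uniformly in $t\in[0,T]$.

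The centerpiece is the identification of the kinetic-energy integral via the momentum equation. Testing \eqref{momentumn-equation} against a cutoff $\varphi_R\in C_c^\infty(\mathbb{R})$ with $\varphi_R(y)=y$ on $[-R,R]$, $|\varphi_R(y)|\leq|y|$, and $|\varphi_R'|\leq 2$, and using the symmetry identity $\int y(\sgn*\rho)(y)\,d\rho(y) = \tfrac12\iint|y-z|\,d\rho(y)\,d\rho(z) = -U(\rho)$, dominated convergence as $R\to\infty$ yields
\begin{align*}
\int_0^t\|v_s^n\|^2_{L^2(\rho_s^n)}\,ds = I^n(t) - I^n(0) + \tfrac12\int_0^t U(\rho_s^n)\,ds,\qquad I^n(t):=\int y\,v_t^n(y)\,d\rho_t^n(y),
\end{align*}
and the same identity holds for $(\rho,v)$. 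Weak distributional convergence of $v_t^n\rho_t^n$ to $v_t\rho_t$ for every $t$ (from the construction in \cite{Tudorascu-2008}, together with the equicontinuity in $t$ of $t\mapsto\int\varphi v_t^n d\rho_t^n$ that the momentum equation forces) combined with tightness of $yv_t^n\rho_t^n$ (Cauchy--Schwarz, the uniform bound on $\|v^n\|$, and uniform convergence of second moments from the preceding paragraph) give $I^n(t)\to I(t)$ for each $t$. Together with $\int_0^t U(\rho_s^n)\,ds\to\int_0^t U(\rho_s)\,ds$ (uniform $U$-convergence), passing to the limit in the displayed identity yields $\int_0^t\|v_s^n\|^2\,ds\to\int_0^t\|v_s\|^2\,ds$ for every $t\in[0,T]$.

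Finally, I would upgrade this integral convergence to a.e.\ convergence of the integrands. Joint $W_2$ and distributional-weak convergence give the lower-semicontinuity $\|v_t\|_{L^2(\rho_t)}^2\leq\liminf\|v_t^n\|_{L^2(\rho_t^n)}^2$ a.e. If $f_n(t)=\|v_t^n\|^2_{L^2(\rho_t^n)}$ is uniformly bounded in $L^\infty([0,T])$, $\liminf f_n\geq f:=\|v_t\|^2_{L^2(\rho_t)}$ a.e., and $\int_0^T f_n\,dt\to\int_0^T f\,dt$, then $(f_n-f)^-\to 0$ a.e.\ and is bounded, hence $\int_0^T(f_n-f)^-\,dt\to 0$ by dominated convergence, which forces $\int_0^T(f_n-f)^+\,dt\to 0$ and thus $f_n\to f$ in $L^1([0,T])$. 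A subsequence therefore converges a.e., and along that subsequence both the weak convergence $\int\varphi v_t^n d\rho_t^n\to\int\varphi v_t d\rho_t$ (for every $t$) and the norm convergence $\|v_t^n\|_{L^2(\rho_t^n)}\to\|v_t\|_{L^2(\rho_t)}$ (a.e.\ $t$) hold, which is exactly the strong convergence claimed. The main obstacle is justifying the cutoff limit $R\to\infty$ in the momentum-equation identity: it hinges on uniform moment control and tightness, both of which rest on the Hamiltonian-based kinetic-energy bound from the first step. Without that bound the cutoff terms cannot be discarded and the identification of the limit of $\int_0^t\|v_s^n\|^2\,ds$ fails.
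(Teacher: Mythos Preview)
Your plan is correct and follows the same backbone as the paper: both proofs pivot on (i) the Hamiltonian monotonicity to get a uniform bound $\|v_t^n\|_{L^2(\rho_t^n)}\leq C_T$ on $[0,T]$, and (ii) the momentum equation tested against $\varphi(y)=y$ to identify $\int_0^t\|v_s^n\|_{L^2(\rho_s^n)}^2\,ds\to\int_0^t\|v_s\|_{L^2(\rho_s)}^2\,ds$ from the convergence of $I^n(t)=\int y\,v_t^n\,d\rho_t^n$ and of the potential-energy term.

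The substantive difference is in how you upgrade the time-integrated kinetic-energy convergence to a.e.\ convergence. The paper passes to Lagrangian coordinates: writing $\|v_t^n\|_{L^2(\rho_t^n)}^2=\int_0^1|\dot N_t^n|^2\,dx$, it shows $\dot N^n\rightharpoonup\dot N$ weakly in $L^2((0,T)\times(0,1))$, then invokes the Hilbert-space fact that weak convergence together with convergence of norms yields strong convergence, and finally extracts an a.e.-in-$t$ subsequence. You stay in the Eulerian picture and run a Scheff\'e-type argument: the lower semicontinuity $\|v_t\|_{L^2(\rho_t)}^2\leq\liminf_n\|v_t^n\|_{L^2(\rho_t^n)}^2$ (from joint weak convergence of $\rho_t^n$ and $v_t^n\rho_t^n$), the uniform $L^\infty([0,T])$ bound, and the equality of integrals force $\|v_t^n\|^2\to\|v_t\|^2$ in $L^1(0,T)$, hence a.e.\ along a subsequence. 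Both routes are valid; the Lagrangian one packages the lower-semicontinuity step into a clean Hilbert-space identity, while yours avoids introducing the optimal maps $N_t^n$ altogether.

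One compression worth flagging: the distributional convergence $v_t^n\rho_t^n\rightharpoonup v_t\rho_t$ that feeds both your $I^n(t)\to I(t)$ step and your lower-semicontinuity step is not free. The paper obtains it by using the explicit representation $v_t^n\rho_t^n=\partial_y[F_n(t,M^n(t,\cdot))]$ from the proof of Proposition~3.3 in \cite{Tudorascu-2008}, showing $F_n\to F$ uniformly and extracting a subsequence along which $M^n(t,\cdot)\to M(t,\cdot)$ a.e.\ in $(0,T)\times\mathbb{R}$; this is precisely where a subsequence first enters. Your phrase ``from the construction in \cite{Tudorascu-2008}'' covers this, but it is the least routine ingredient and deserves to be made explicit. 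Likewise, the paper justifies testing the momentum equation against the unbounded function $\varphi(y)=y$ by invoking the uniform integrability of second moments (via the superlinear $\zeta_t$ supplied by \cite{Tudorascu-2008}); your cutoff-and-tightness argument is an equivalent way to handle the same issue.
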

        \begin{proof}
            By Theorem \ref{DSPS-Hamiltonian}, $t\mapsto H(\rho_t^n,v_t^n)$ is nonincreasing for $t\in[0,T]$. Thus, we can establish that
            \begin{align}\label{HamiltonianBound}
                H(\rho_t^n,v_t^n)\leq H(\rho_0^n,v_0^n)\leq C<\infty 
            \end{align}
            for all $n\in\mathbb{N}$, $t\geq0$ by the bounds from Proposition 3.3 in \cite{Tudorascu-2008}. By this proposition, $\max_{t\in[0,T]}W_2(\rho_t^n,\rho_t)\to0$, so we can deduce that
            \begin{align*}
                -2U(\rho_t^n)=&\iint|x-y|\rho_t^n(dx)\rho_t^n(dy)\ \ \leq \ \ 2\|N_t^n\|_{L^1(0,1)}
                \\
                \ \ \leq& \ \ 2\|N_t\|_{L^1(0,1)}+2\|N_t^n-N_t\|_{L^1(0,1)}
                \ \ \leq\ \ \ 2\|N_t\|_{L^2(0,1)}+2W_2(\rho_t^n,\rho_t)
                \\
                \ \ \leq& \ \ 2W_2(\rho_t^n,\rho_t)+2\sqrt{\int y^2\rho_t(dy)}       
                \ \ \leq \ \ 2W_2(\rho_t^n,\rho_t)+2\max_{t\in[0,T]}\sqrt{\int y^2\rho_t(dy)}
            \end{align*}
            since the function $t\mapsto\int y^2\rho_t(dy)$, $t\in[0,T]$, is continuous. Thus, $-2U(\rho_t^n)$ is bounded uniformly with respect to $n\in\mathbb{N}$ and $t\in[0,T]$. 

            Now, we deduce from \eqref{HamiltonianBound} that 
            \begin{equation}\label{velo-l2-bound}
                \|v_t^n\|_{L^2(\rho_t^n)}\leq C_T<\infty\ \mbox{ for all }\ t\in[0,T].
                \end{equation}
                As a first consequence, we have:
            \begin{align*}
                \int_0^T \!\! \int_0^1|\dot{N}_t^n(x)|^2dxdt \ \leq \ TC_T^2.
            \end{align*}
            Since 
            \begin{align*}
                \int_0^T \!\! \int_0^1|N_t^n(x)-N_t(x)|^2dxdt \ \leq \ T\max_{0\in[0,T]}W_2^2(\rho_t^n,\rho_t),
            \end{align*}
            we deduce that $N^n\to N$ in $L^2((0,T)\times (0,1))$ and $\dot{N^n}\rightharpoonup\dot{N}$ weakly in $L^2((0,T)\times (0,1))$. 

            Proposition 3.3, \cite{Tudorascu-2008} also shows that the right-continuous cumulative distribution functions $M_t^n$ and $M_t$ satisfy 
            \begin{align}\label{CDF-condition}
                \max_{t\in[0,T]}\|M_t^n-M_t\|_{L^1(\mathbb{R})}=\max_{t\in[0,T]}W_1(\rho_t^n,\rho_t)\to0
            \end{align}
            as $n\to\infty$. Furthermore, in the proof of Proposition 3.3, \cite{Tudorascu-2008} it is shown that $v_t^n\rho_t^n=\partial_y[{F}_n(t,M^n(t,y))]$ and $v_t\rho_t=\partial_y[{F}(t,M(t,y))]$, where 
            \begin{align*}
                {F}_n(t,m)=\int_0^m v_0\circ N_0^n(w)dw+t\int_0^ma_n(w)dw
            \end{align*}
            and
            \begin{align*}
                {F}(t,m)=\int_0^m v_0\circ N_0 (w)dw+t\int_0^ma(w)dw.
            \end{align*}
            Here, $a$ and $a_n$ are the acceleration terms.
            It is easy to see that ${F}_n\to{F}$ uniformly on $[0,T]\times[0,1]$ as uniformly continuous functions. 
            Since $\max_{t\in[0,T]}\|M^n(t,\cdot)-M(t,\cdot)\|_{L^1(\mathbb{R})}$ converges to zero, we get $$\|M^n-M\|_{L^1((0,T)\times\mathbb{R})}\rightarrow0\mbox{ as }n\rightarrow\infty,$$ 
            so we can extract a subsequence $\{M^{n_k}\}_k$ such that $M^{n_k}\rightarrow M$ Lebesgue a.e. in $(0,T)\times\mathbb{R}$. This implies that for a.e. $t\in(0,T)$, the sequence $\{F(t,M^{n_k}(t,\cdot))\}_k$ converges to $F(t,M(t,\cdot))$ a.e. on $\mathbb{R}$. By \eqref{velo-l2-bound} applied to $t=0$ and the uniform bounds on $a_n$ and $a$, we see that $\{F(t,M^{n_k}(t,\cdot)\}_k$ and $F(t,M(t,\cdot))$ are uniformly bounded in $L^\infty(\mathbb{R})$.
            Along with the uniform convergence of $F_n$ to $F$, we infer ${F}_{n_k}(t,M^{n_k}(t,\cdot))-{F}(t,M(t,\cdot))$ converges to zero in $L^1_{loc}(\mathbb{R})$. Since the limit does not depend on the extracted subsequence, we get that the full sequence ${F}_{n}(t,M^{n}(t,\cdot))-{F}(t,M(t,\cdot))$ converges to zero in $L^1_{loc}(\mathbb{R})$. Thus, $\partial_y\big[{F}_n(t,M^n(t,\cdot))\big]\rightharpoonup\partial_y\big[{F}(t,M(t,\cdot))\big]$, i.e. $v_t^n\rho_t^n\rightharpoonup v_t\rho_t$ as distributions as $n\to\infty$ for a.e. $t\in[0,T]$.  In the proof of Proposition 3.3, \cite{Tudorascu-2008} it is shown that for all $t\in[0,T]$ there exists an increasing, convex, superlinear function $\zeta_t$ on $[0,\infty)$ with $\zeta_t(0)=0$ and such that 
            $$\sup_{n\geq1}\int\zeta_t(y^2)\rho^n_t(dy)<\infty.$$
            This implies the sequence $\{\rho^n_t\}_n$ has uniformly (with respect to $n$) integrable second moments, so we can use \eqref{velo-l2-bound} again to conclude, via a standard approximation argument, that
            \begin{align}\label{yv^n-convergence}
                \lim_{n\to\infty}\int yv_t^n(y)\rho_t^n(dy)=\int yv_t(y)\rho_t(dy)\ \mbox{ for all }\ t\in[0,T].
            \end{align}
            We also infer that we can use test functions of linear growth in the weak formulation of the two coupled equations; see \cite{Suder-Tudorascu} for more details on how to extend the class of test functions from smooth and compactly supported to smooth and at most linear. 
            The momentum equation \eqref{momentumn-equation} for $\varphi(y)=y$ yields  
            \begin{align*}
                \frac{d}{dt}\int yv_t^n(y)\rho_t^n(dy)=\int|v_t^n|^2d\rho_t^n-\iint y\sgn(y-z)\rho_t^n(dz)\rho_t^n(dy),
            \end{align*}
            so 
            \begin{align}\label{P_n}
                \int \! yv_t^n(y)&\rho_t^n(dy)-\int \! yv_0^n(y)\rho_0^n(dy)
                = \int_0^t \!\! \int \! |v_s^n|^2d\rho_s^n ds-\int_0^t \!\! \iint \! y\sgn(y-z)\rho_s^n(dz)\rho_s^n(dy)ds
            \end{align}
            for all $t\in[0,1]$. Likewise, since $(\rho,v)$ solves the IVP for initial data $(\rho_0,v_0)$, we deduce 
            \begin{align}\label{P}
                \int \! yv_t(y)\rho_t(dy)&-\int \! yv_0(y)\rho_0(dy)
                =\int_0^t \!\! \int \! |v_s|^2d\rho_sds-\int_0^t \!\! \iint \! y\sgn(y-z)\rho_s(dz)\rho_s(dy)ds. 
            \end{align}
            We use \eqref{yv^n-convergence} to deduce that the left-hand side of \eqref{P_n} converges to the left-hand side of \eqref{P} for all $t\in[0,T]$. Likewise, $\max_{t\in[0,T]}W_2(\rho^n_t,\rho_t)\rightarrow0$ shows that the second term in the right-hand side of \eqref{P_n} converges to its counterpart from \eqref{P}. This gives us, for all $t\in[0,T]$,
            \begin{align*}
                \lim_{n\to\infty}\int_0^t \!\! \int|v_s^n|^2d\rho_s^nds=\int_0^t \!\! \int|v_s|^2d\rho_sds. 
            \end{align*}
            In particular, as $n\to\infty$,
            \begin{align*}
                \int_0^T \!\! \int_0^1|\dot{N^n}(x,t)|^2dxdt\to\int_0^T \!\! \int_0^1|\dot{N}(x,t)|^2dxdt. 
            \end{align*}
            We have shown that $\dot{N}^n\rightharpoonup\dot{N}$ weakly; it follows that $\dot{N}^n\to\dot{N}$ strongly in $L^2((0,T)\times(0,1))$. In particular, up to a subsequence $\{n_k\}_k$, we have 
            \begin{align*}
                \int_0^1|\dot{N}^{n_k}(x,t)-\dot{N}(x,t)|^2dx\to0
            \end{align*}
            for a.e. $t\in[0,T]$. Thus, for a.e. $t\in[0,T]$
            \begin{align*}
                \int|v_t^{n_k}|^2d\rho_t^{n_k}&=\int_0^1|\dot{N}^{n_k}(x,t)|^2dx\to\int_0^1|\dot{N}(x,t)|^2dx=\int|v_t|^2d\rho_t. \qedhere
            \end{align*}
        \end{proof}
        Considering the above subsequence gives us the following. 
        \begin{corollary}\label{existence-general}
            Let $\rho_0\in\mathcal{P}_2(\mathbb{R})$, and $v_0\in C(\mathbb{R})$ of at most quadratic growth. Then for any GSPS $(\rho,v)$ originating at $(\rho_0,v_0)$ there exists a sequence of discrete approximations $(\rho_0^n,v_0^n)$ to the initial data satisfying the conditions from Proposition 3.3 \cite{Tudorascu-2008} for which the corresponding repulsive SPS solutions $(\rho^n,v^n)$ satisfy, for all $T>0$, 
            \begin{enumerate}[label={(\arabic*)}]
                \item$\max_{t\in[0,T]}W_2(\rho_t^n,\rho_t)\to0$ as $n\to\infty$, and 
                \item $v_t^n\rho_t^n\to v_t\rho_t$ strongly as $n\to\infty$ for a.e. $t\in[0,T]$. Notably, we have that for a.e. $t\in[0,T]$, $H(\rho_t^n,v_t^n)\to H(\rho_t,v_t)$ as $n\to\infty$. 
            \end{enumerate}
        \end{corollary}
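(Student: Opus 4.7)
The plan is to deduce the corollary directly from the preceding theorem by passing to the already-constructed subsequence and then verifying that Hamiltonian convergence follows for free from the two convergences already in hand.

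Given a GSPS $(\rho,v)$ originating at $(\rho_0,v_0)$ and fixed $T>0$, I would first invoke the preceding theorem to obtain a sequence of discrete approximations $(\rho_0^n,v_0^n)$ satisfying the conditions of Proposition 3.3 in \cite{Tudorascu-2008}, with the corresponding repulsive SPS solutions $(\rho^n,v^n)$ enjoying $\max_{t\in[0,T]} W_2(\rho_t^n,\rho_t)\to 0$ and, along a subsequence $\{n_k\}_k$, strong convergence $v_t^{n_k}\rho_t^{n_k}\to v_t\rho_t$ for a.e. $t\in[0,T]$. Relabeling this subsequence produces the approximating sequence claimed by the corollary and immediately yields (1) together with the first assertion in (2).

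For the Hamiltonian convergence, I would split $H(\rho_t^n,v_t^n)=\|v_t^n\|_{L^2(\rho_t^n)}^2+U(\rho_t^n)$. The kinetic-energy part converges for a.e. $t\in[0,T]$ by the strong convergence just extracted (this is exactly the $\|v^n\|^2$ convergence built into the definition of strong convergence). For the potential-energy part, set $G(\mu):=\iint|x-y|\mu(dx)\mu(dy)$, so $U=-G/2$. Using the elementary inequality $\bigl|\,|x-y|-|u-v|\,\bigr|\leq|x-u|+|y-v|$ and coupling $\rho_t^n\otimes\rho_t^n$ with $\rho_t\otimes\rho_t$ via $\pi_t\otimes\pi_t$, where $\pi_t$ is an optimal $W_1$-coupling between $\rho_t^n$ and $\rho_t$, one obtains
\begin{align*}
\bigl|U(\rho_t^n)-U(\rho_t)\bigr|\leq W_1(\rho_t^n,\rho_t)\leq W_2(\rho_t^n,\rho_t),
\end{align*}
which vanishes uniformly in $t\in[0,T]$ by (1). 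Combining the two parts gives $H(\rho_t^n,v_t^n)\to H(\rho_t,v_t)$ for a.e. $t\in[0,T]$.

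There is no substantive obstacle here: the corollary is essentially a restatement of the theorem with the a.e.-convergent subsequence absorbed into the statement, and the only additional content—the Hamiltonian limit—reduces to continuity of the potential energy in $W_2$, which is a standard Lipschitz-cost estimate. The one point to be careful with is that the Hamiltonian convergence holds only a.e. in $t$, inherited from the a.e. nature of the kinetic-energy convergence; the potential part in fact converges uniformly on $[0,T]$, but this cannot be upgraded for $H$ itself without strengthening the convergence of $v^n$.
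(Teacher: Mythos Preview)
Your argument is correct and matches the paper's approach exactly: the paper's own proof is the single sentence ``Considering the above subsequence gives us the following,'' and you have simply fleshed this out, including a clean Lipschitz-in-$W_1$ estimate for the potential energy that the paper leaves implicit.

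One small quantifier issue: you fix $T>0$ \emph{before} extracting the subsequence, so the sequence you produce is a priori $T$-dependent, whereas the corollary asks for a single sequence that works for every $T>0$. This is easily repaired by a diagonal argument (extract nested subsequences for $T=1,2,3,\ldots$ and diagonalize), and the paper is equally silent on this point, but it is worth making explicit.
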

        Note how this allows us to extend Theorem \ref{DSPS-Hamiltonian} to the general case. We state this observation in the following theorem. 
        \begin{theorem}\label{GSPS-Hamiltonian}
            Let $\rho_0\in\mathcal{P}_2(\mathbb{R})$, $v_0\in L^2(\rho_0)$. Then the Hamiltonian of any GSPS originating at $(\rho_0,v_0)$ is nonincreasing as a function of time. 
        \end{theorem}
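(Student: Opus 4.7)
The plan is to lift the monotonicity of Theorem \ref{DSPS-Hamiltonian} to a general GSPS by passing to the limit along the approximation scheme of Corollary \ref{existence-general}. Given a GSPS $(\rho,v)$ with initial data $(\rho_0,v_0)\in\mathcal{P}_2(\mathbb{R})\times L^2(\rho_0)$, Corollary \ref{existence-general} supplies a sequence of finite DSPS $(\rho^n,v^n)$ with $\max_{t\in[0,T]}W_2(\rho_t^n,\rho_t)\to 0$ for every $T>0$ and, for a.e. $t\in[0,T]$, strong convergence $v_t^n\rho_t^n\to v_t\rho_t$. Combining the kinetic-energy convergence $\|v_t^n\|^2_{L^2(\rho_t^n)}\to\|v_t\|^2_{L^2(\rho_t)}$ with the $W_2$-continuity of the potential $U(\rho)=-\tfrac12\iint|x-y|\,\rho(dx)\rho(dy)$ (the latter resting on uniform integrability of the linear-growth integrand $|x-y|$, a consequence of uniform second-moment bounds along the sequence) yields $H(\rho_t^n,v_t^n)\to H(\rho_t,v_t)$ for a.e. $t\in[0,T]$, as Corollary \ref{existence-general} already records.

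The passage to the limit is then essentially immediate. Let $E\subseteq[0,T]$ be the full-measure set on which $H(\rho_t^n,v_t^n)\to H(\rho_t,v_t)$. By Theorem \ref{DSPS-Hamiltonian}, for every $n$ and every $0\leq s\leq t\leq T$ one has $H(\rho_s^n,v_s^n)\geq H(\rho_t^n,v_t^n)$. Restricting to $s,t\in E$ and sending $n\to\infty$ gives
\[ H(\rho_s,v_s)\ \geq\ H(\rho_t,v_t)\quad\text{whenever }s,t\in E,\ s\leq t, \]
which is the claimed monotonicity on the natural domain of definition, since $v_t$ itself is only specified for a.e. $t$.

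I do not anticipate a serious obstacle: the substantive work has already been done upstream, in Theorem \ref{DSPS-Hamiltonian} (discrete monotonicity, via Lemma \ref{PE-continuity} and the Cauchy--Schwarz drop at non-glancing collisions from Proposition \ref{KE-discontinuity}) and in Corollary \ref{existence-general} (strong convergence of $\dot N^n\to\dot N$, upgraded from weak convergence by the momentum-equation energy balance). The only point worth flagging is that the conclusion is inherently a.e.-in-$t$, matching the a.e.-in-$t$ specification of $v_t$ for a GSPS; arbitrariness of $T>0$ then delivers the statement on all of $[0,\infty)$.
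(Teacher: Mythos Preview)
Your proposal is correct and matches the paper's own (implicit) proof: the paper simply remarks that Corollary \ref{existence-general} ``allows us to extend Theorem \ref{DSPS-Hamiltonian} to the general case,'' which is precisely the limiting argument you spell out. One small refinement: since Corollary \ref{existence-general} assumes $v_0\in C(\mathbb{R})$ while the theorem is stated for $v_0\in L^2(\rho_0)$, you may as well invoke the \emph{definition} of GSPS directly---it already furnishes an approximating sequence in $\mathcal{S}_d$ with $\max_{[0,T]}W_2(\rho_t^n,\rho_t)\to0$ and $\|v_t^n\|_{L^2(\rho_t^n)}\to\|v_t\|_{L^2(\rho_t)}$ a.e., which is all your argument needs.
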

        \begin{corollary}
            Let $\rho_0\in\mathcal{P}_2(\mathbb{R})$, $v_0\in L^2(\rho_0)$ and $(\rho,v)$ be a GSPS to the repulsive PEP equations starting at $(\rho_0,v_0)$. If $(\rho,v)$ tends to an equilibrium, then for all $t\geq0$, 
            \begin{align*}
                H(\rho_t,v_t)\geq\Big(\int v_0d\rho_0\Big)^2.
            \end{align*}
            In particular, the Hamiltonian of a solution which goes to equilibrium is never negative. 
        \end{corollary}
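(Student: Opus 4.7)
The plan is to reduce to the Galilean-shifted centered solution and then combine monotonicity of the Hamiltonian with asymptotic vanishing of the potential energy. First, I would introduce $(\bar\rho,\bar v)$ as in Proposition~\ref{properties}, \ref{galilean-invariance}, so that $\bar\rho_t$ has zero center of mass and zero total momentum for all $t\geq 0$. Using the identity $H(\rho_t,v_t)=H(\bar\rho_t,\bar v_t)+m_0^2$ (with $m_0=\int v_0\,d\rho_0$) noted right after Theorem~\ref{energy-constant}, the problem reduces to showing $H(\bar\rho_t,\bar v_t)\geq 0$ for every $t\geq 0$.

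Next, since $(\rho,v)$ tends to an equilibrium, Proposition~\ref{properties}, \ref{only-equilibrium} forces the only admissible limit in the centered frame to be $(\delta_0,0)$, so $W_2(\bar\rho_t,\delta_0)\to 0$ as $t\to\infty$. Because $W_1\leq W_2$, we obtain $\int|y|\,\bar\rho_t(dy)=W_1(\bar\rho_t,\delta_0)\to 0$. The elementary bound $|x-y|\leq|x|+|y|$ then yields
\[
-U(\bar\rho_t)=\tfrac12\iint|x-y|\,\bar\rho_t(dx)\,\bar\rho_t(dy)\leq\int|y|\,\bar\rho_t(dy),
\]
so $U(\bar\rho_t)\to 0$. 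Since the kinetic energy is nonnegative, this gives $\liminf_{t\to\infty}H(\bar\rho_t,\bar v_t)\geq 0$.

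Finally I would invoke Theorem~\ref{GSPS-Hamiltonian}: $t\mapsto H(\bar\rho_t,\bar v_t)$ is nonincreasing. Monotonicity together with the asymptotic lower bound above forces $H(\bar\rho_t,\bar v_t)\geq 0$ for every $t\geq 0$, and adding $m_0^2$ delivers the claimed inequality $H(\rho_t,v_t)\geq m_0^2=(\int v_0\,d\rho_0)^2$, with nonnegativity as an immediate byproduct.

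The only real subtlety I anticipate is the precise meaning of ``tends to an equilibrium'' used in the statement: it must be strong enough to force $U(\bar\rho_t)\to 0$. In the $W_2$-based framework used throughout the paper this is automatic, as sketched above; if one adopted merely narrow convergence, the potential-energy step would need to be propped up with tightness or uniform integrability of first moments, but no deeper ingredient than that is required.
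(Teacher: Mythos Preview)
Your proposal is correct and follows essentially the same approach as the paper: reduce via Galilean invariance to the centered case, bound $-U(\bar\rho_t)$ by the first moment to get $U(\bar\rho_t)\to 0$, and combine this with nonnegativity of the kinetic term and monotonicity of the Hamiltonian. Your version is in fact slightly more streamlined than the paper's, which splits into finite-time and asymptotic cases and argues the latter by contradiction, whereas you handle both at once via $\liminf H\geq 0$ together with monotonicity.
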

        \begin{proof}
            First, take $(\bar{\rho},\bar{v})$ as in Proposition \ref{properties}, \ref{galilean-invariance}. We then have 
            \begin{align*}
                H(\rho_t,v_t)&\ =\ \|v_t\|_{L^2(\rho_t)}^2+U(\rho_t)\ =\ \|\bar{v}_t\|_{L^2(\bar{\rho}_t)}^2+m_0^2+U(\bar{\rho}_t)
                \ =\ H(\bar{\rho}_t,\bar{v}_t)+\Big(\int v_0d\rho_0\Big)^2.
            \end{align*}
            Therefore, we only need to show that $H(\bar{\rho}_t,\bar{v}_t)\geq0$. First, suppose $(\bar{\rho}_t,\bar{v}_t)$ goes to equilibrium in finite time. It is clear that at equilibrium, we have $H(\rho_t,v_t)=0$. Since $H(\rho_t,v_t)$ is nonincreasing in time (Theorem \ref{GSPS-Hamiltonian}), $H(\rho_t,v_t)<0$ is impossible. 

            Now consider the case of asymptotic equilibrium. Noting that the only possibility (Proposition \ref{properties}, \ref{only-equilibrium}) is $\rho_\infty=\delta_0$, say that $W_1(\rho_t,\delta_0)\to0$ as $t\to\infty$. Then, the potential energy goes to 0: 
            \begin{align*}
                \frac{1}{2}\iint|x-y|&\rho_t(dx)\rho_t(dy)=\frac{1}{2}\int_0^1 \!\! \int_0^1|N_t(x)-N_t(y)|dxdy
                \\
                \leq&\frac{1}{2}\int_0^1|N_t(x)|dx+\frac{1}{2}\int_0^1|N_t(y)|dy= W_1(\rho_t,\delta_0).
            \end{align*}
            Now, assume for contradiction that for some $t_0\geq0$, $H(\rho_{t_0},v_{t_0})<0$, meaning that for all $t\geq t_0$, (since the Hamiltonian is nonincreasing by Theorem \ref{GSPS-Hamiltonian})
            \begin{align*}
                \|v_t\|_{L^2(\rho_t)}^2<\frac{1}{2}\iint|x-y|\rho_t(dx)\rho_t(dy)\leq W_1(\rho_t,\delta_0).
            \end{align*}
            Therefore, both potential and kinetic energy go to $0$, meaning $H(\rho_t,v_t)\to0$. Since $H(\rho_t,v_t)$ is nonincreasing, it cannot be negative for any time, a contradiction.         
        \end{proof}
                
\section{Perfect Solutions}\label{perfect-section}
        The following section is dedicated to further exploring the phenomenon of perfect solutions; those in which all collisions are glancing, and equilibrium is reached. We find it prudent before we begin to make a remark that will be of help throughout the section.
        \begin{remark}\label{lipschitz}
            In a perfect DSPS, the velocity of every particle is Lipschitz continuous. 
        \end{remark}
        The proof of this statement is a single line: its derivative (acceleration) exists almost everywhere and is bounded above and below by $\pm 1/2$. The usefulness of this statement resides primarily in the fact that Lipchitz continuity allows us to apply the fundamental theorem of calculus. Since position is clearly $C^1$ which is stronger, we have for any particle trajectory $y(t)$ in a perfect solution that
        \begin{align}\label{position_double_integral}
            y(t) = y(0) + ty'(0) + \int^t_0 \!\! \int^x_0y''(w) \ dw \ dx.
        \end{align}
        This will prove useful ahead. 

        We now turn our attention to the primary result of the section: the existence and uniqueness of velocity distributions yielding perfect solutions given discrete initial mass distributions. Proof of this fact will require several lemmas, the first of which has much merit on its own to the understanding of glancing collisions. It shows that whether a first collision of two particles is glancing depends solely on the interplay between their masses and their relative initial positions and velocities, or equivalently the time of their collision. The argument is mainly algebraic. 
        \begin{lemma}\label{perfect-adjacent-particles}
            Let $\rho$ be a discrete solution, $\rho_t=\sum_i m_i\delta_{y_i(t)}$, with $y_1(t)\leq y_2(t)\leq\ldots\leq y_n(t)$ for all $t\geq0$. Consider two adjacent particles with masses $m_i$, $m_{i+1}$ and trajectories $y_i(t)$, $y_{i+1}(t)$ which collide with each other but do not collide with other particles beforehand. Then the following statements are equivalent:
           
            (i) The particles collide glancingly.

            (ii) The particles initial positions and velocities satisfy
            \begin{equation}\label{veloc-perfect}
                y_i'(0)-y_{i+1}'(0)=\sqrt{\big(y_{i+1}(0)-y_i(0)\big)(m_i+m_{i+1})}.
            \end{equation}

            (iii) The particles collide at time $t_c$, where 
            \begin{align}\label{TC}
                t_c=2\sqrt{\frac{y_{i+1}(0)-y_i(0)}{m_i+m_{i+1}}}.
            \end{align}
        \end{lemma}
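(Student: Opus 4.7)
The plan is to reduce everything to the scalar gap function $d(t) := y_{i+1}(t) - y_i(t)$, which is purely quadratic on $[0,t_c]$ since, by assumption, neither particle collides with any third particle before $t_c$. Applying the kinematic law from Proposition \ref{properties}, the accelerations of $p_i$ and $p_{i+1}$ differ by exactly $\tfrac12(m_i + m_{i+1})$, so
\[
d(t) = d(0) + d'(0)\,t + \tfrac{1}{4}(m_i + m_{i+1})\,t^2,
\]
with $d(0) = y_{i+1}(0) - y_i(0) > 0$ and $d'(0) = y_{i+1}'(0) - y_i'(0)$. A collision at time $t_c$ means $d(t_c)=0$, and by Definition \ref{glancing-perfect} (together with the remark that a collision is glancing iff the trajectories of the participating particles are differentiable there) the collision is glancing iff in addition $d'(t_c)=0$, i.e.\ $t_c$ is the vertex of the parabola.

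First I would prove (i)$\Rightarrow$(ii)$\Rightarrow$(iii). Assuming the collision is glancing, the vertex condition $d'(t_c)=0$ gives
\[
t_c = \frac{-2 d'(0)}{m_i + m_{i+1}} = \frac{2\bigl(y_i'(0) - y_{i+1}'(0)\bigr)}{m_i + m_{i+1}},
\]
which in particular forces $d'(0) < 0$. Plugging this $t_c$ into $d(t_c) = 0$ and simplifying yields $d'(0)^2 = d(0)(m_i + m_{i+1})$; taking the negative square root (since $d'(0)<0$) produces exactly \eqref{veloc-perfect}, which is (ii). Substituting the resulting $d'(0)$ back into the formula for $t_c$ gives \eqref{TC}, which is (iii).

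For the converses I would go (iii)$\Rightarrow$(i) and (ii)$\Rightarrow$(i) directly. If (iii) holds, plug $t_c$ from \eqref{TC} into $d'(t) = d'(0) + \tfrac12(m_i+m_{i+1})t$ and use the hypothesis $d(t_c)=0$ (which by our quadratic formula forces $d'(0) = -\sqrt{d(0)(m_i+m_{i+1})}$, given that $d'(0)<0$ is needed for a collision to occur) to check that $d'(t_c) = 0$, so the collision is glancing. If (ii) holds, one again reads off $d'(0) = -\sqrt{d(0)(m_i+m_{i+1})}$ and a direct substitution into $d(t)$ reveals that the unique positive root of $d$ is a double root at $t = 2\sqrt{d(0)/(m_i + m_{i+1})}$, again giving both (i) and (iii). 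There is no serious obstacle here beyond bookkeeping the sign $d'(0) < 0$, which is automatic from the existence of a collision before any external interference: if $d'(0) \ge 0$ the gap is strictly increasing (as the right-hand particle is already being pushed rightward relative to the left one), so no first collision between $p_i$ and $p_{i+1}$ can occur.
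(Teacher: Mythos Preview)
Your proof is correct and follows essentially the same approach as the paper: both reduce to elementary algebra on the explicit quadratic trajectories, with your gap function $d(t)=y_{i+1}(t)-y_i(t)$ being exactly the paper's $y_{i+1}(t)-y_i(t)$ (the paper carries along the common offset $k=\sum_{j<i}m_j-\sum_{j>i+1}m_j$, which cancels in every difference, whereas you eliminate it at the outset). The only cosmetic difference is the order of implications---the paper proves (i)$\Rightarrow$(ii),(iii), then (ii)$\Rightarrow$(i), then (iii)$\Rightarrow$(ii)---but the computations are line-for-line the same.
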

    
        \begin{proof}
            Let $t_0$ be the time of collision, and let $k = \sum_{j < i}{m_j} - \sum_{j > i + 1}{m_j}$. Then, while $t < t_0$, we have 
            \begin{align}
                y_i''(t) &= \frac{k - m_{i+1}}{2}, \ 
                y_i'(t) = y_i'(0) + \frac{k - m_{i+1}}{2} t, \label{vel__reference_i}\\
                y_i(t) &= y_i(0) + y_i'(0)t + \frac{k - m_{i+1}}{4} t^2, \label{pos__reference_i}\\
                y_{i+1}''(t) &= \frac{k + m_{i}}{2},\ 
                y_{i+1}'(t) = y_{i+1}'(0) + \frac{k + m_{i}}{2} t, \label{vel__reference_i+1}\\
                y_{i+1}(t) &= y_{i+1}(0) + y_{i+1}'(0) t + \frac{k + m_{i}}{4} t^2. \label{pos__reference_i+1}
            \end{align}
            Note that \eqref{pos__reference_i} and \eqref{pos__reference_i+1} apply also at $t=t_0$ due to the continuity of position. Now, assume $i$; that the particles collide glancingly. Since the collision is glancing, we have that $y_i(t_0) = y_{i+1}(t_0)$ and $y_i'(t_0) = y_{i+1}'(t_0)$. By the above equalities, $y'_i(t_0) = y'_{i+1}(t_0)$ implies
            \begin{align}
                y_i'(0) + \frac{k - m_{i+1}}{2} t_0 &= y_{i+1}'(0) + \frac{k + m_{i}}{2} t_0,\nonumber \\
                y_i'(0) - y_{i+1}'(0) &= \frac{m_i + m_{i+1}}{2} t_0 \label{velocity_difference_at_0}
            \end{align}
            and $y_i(t_0) = y_{i+1}(t_0)$ implies
            \begin{align}
                y_i(0) + y_i'(0)t_0 + \frac{k - m_{i+1}}{4} t_0^2 &= y_{i+1}(0) + y_{i+1}'(0)t_0 + \frac{k + m_{i}}{4} t_0^2 \nonumber\\
                \big( y'_i(0) - y'_{i+1}(0) \big) t_0 - \frac{m_i + m_{i+1}}{4}t_0^2 &= y_{i+1}(0) - y_i(0). \label{position_difference_at_0}
            \end{align}
            By substituting \eqref{velocity_difference_at_0} into \eqref{position_difference_at_0}, we get
            \begin{align*}
                \frac{m_i + m_{i+1}}{4}t_0^2 &= y_{i+1}(0) - y_i(0) 
                \\
                t_0 &= 2\sqrt{\frac{y_{i+1}(0) - y_i(0)}{m_i + m_{i+1}}} = t_c.
            \end{align*}
            Substituting this value of $t_0$ into \eqref{velocity_difference_at_0}, we get
            \begin{align*}
                y_i'(0) - y_{i+1}'(0) &= \frac{m_i + m_{i+1}}{2} \cdot 2\sqrt{\frac{y_{i+1}(0) - y_i(0)}{m_i + m_{i+1}}} = \sqrt{\big( y_{i+1}(0) - y_i(0) \big) \left(m_i + m_{i+1} \right)}.
            \end{align*}
            Thus, we have proven that \emph{(i)} implies \emph{(ii)} and \emph{(iii)}. Now, assume \eqref{veloc-perfect} holds.
            We want to show that the particles collide glancingly at $t_c$, that is, $y_i(t_c) - y_{i + 1}(t_c) = 0$, $y_i'(t_c) - y_{i + 1}'(t_c) = 0$. From \eqref{pos__reference_i} and \eqref{pos__reference_i+1}, 
            \begin{align*}
                y_i(t_c) - y_{i + 1}(t_c) 
                =& -\big( y_{i+1}(0) - y_i(0) \big) + \big( y'_i(0) - y_{i+1}'(0)\big) t_c - \left( \frac{m_i + m_{i+1}}{4} \right) t_c^2 \\
                =& -\big( y_{i+1}(0) - y_i(0) \big) + \left( \sqrt{\big( y_{i+1}(0) - y_i(0) \big) \left(m_i + m_{i+1} \right)} \right) \left( 2\sqrt{\frac{y_{i+1}(0) - y_i(0)}{m_i + m_{i+1}}} \right)  \\ 
                &\quad\quad - \left( \frac{m_i + m_{i+1}}{4} \right) \left( 4 \frac{y_{i+1}(0) - y_i(0)}{m_i + m_{i+1}} \right) \nonumber \\
                =& -\big( y_{i+1}(0) - y_i(0) \big) + 2 \big( y_{i+1}(0) - y_{i}(0) \big) - \big( y_{i+1}(0) - y_{i}(0) \big)\ = \ 0,
            \end{align*}
            and from \eqref{vel__reference_i} and \eqref{vel__reference_i+1} we have
            \begin{align*}
                y_i'(t_c) - y_{i + 1}'(t_c) 
                =& \left( y_i'(0) - y_{i+1}'(0) \right) - \frac{m_{i+1} + m_i}{4} t_c \\
                =& \sqrt{\big( y_{i+1}(0) - y_i(0) \big) \left(m_i + m_{i+1} \right)} - \frac{m_{i+1} + m_i}{2} 2\sqrt{\frac{y_{i+1}(0) - y_i(0)}{m_i + m_{i+1}}} 
                = \ 0.
            \end{align*}
            Thus \emph{(ii)} implies \emph{(i)}. Finally, we will prove that \emph{(iii)} implies \emph{(ii)}, and therefore \emph{(i)} as well by the above. Given the assumption that $y_i(t_c) = y_{i+1}(t_c)$ and \eqref{pos__reference_i}, \eqref{pos__reference_i+1}, we argue as follows.
            \begin{align*}
                y_i(0)& + y_i'(0)t_c + \frac{k - m_{i+1}}{4} t_c^2 = y_{i+1}(0) + y_{i+1}'(0) t_c + \frac{k + m_{i}}{4} t_c^2 
                \\ 
                &\Rightarrow \left (y_i'(0) - y_{i+1}'(0) \right) t_c = y_{i+1}(0) - y_i(0) + (m_i + m_{i+1}) \frac{y_{i+1}(0)-y_i(0)}{m_i+m_{i+1}} \\
                &\Rightarrow y_i'(0) - y_{i+1}'(0) = \frac{2\big(y_{i+1}(0) - y_i(0)\big)}{2\sqrt{\frac{y_{i+1}(0)-y_i(0)}{m_i+m_{i+1}}}} 
                =\sqrt{\big(y_{i+1}(0)-y_i(0)\big)(m_i+m_{i+1})}. \qedhere
            \end{align*}
        \end{proof}
        The following lemma handles a specific situation relevant to Theorem \ref{perfect_e_and_u}, and is the final preliminary to its proof.
        \begin{lemma}\label{Negative Relative Velocity}
            Let $\rho_0$ be a distribution of finitely many particles with masses $m_1,\ldots,m_n$ and trajectories $y_1\leq\ldots\leq y_n$. Assume $(\rho, v)$ is a perfect solution originating at the distribution $\rho_0$. If two adjacent particles $(m_i, y_i(0))$ and $(m_{i+1}, y_{i+1}(0))$ with $y_i(0) < y_{i+1}(0)$  do not engage in any collisions before \mbox{$t_c=2\sqrt{\frac{y_{i+1}(0)-y_i(0)}{m_i+m_{i+1}}}$} and do not collide with each other at $t_c$, then the particles are moving away from each other at $t_c$, i.e., $y_{i+1}'(t_c)-y_i'(t_c) > 0$.
        \end{lemma}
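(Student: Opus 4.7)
The plan is to carry out a direct computation using the kinematics law from Proposition \ref{properties}(\ref{particle-kinematics}), exploiting the fact that it continues to hold for particles $i$ and $i+1$ even if other, disjoint pairs of particles collide in $[0,t_c)$. Indeed, the acceleration of $p_i$ depends only on the total mass initially strictly to the left of $y_i(0)$ minus the total mass initially strictly to the right, and these sums are invariant under collisions that do not involve $p_i$ (conservation of mass preserves mass on each side). The perfectness assumption is not required for the computation itself but guarantees, via Remark \ref{lipschitz}, that the one-sided velocities at $t_c$ are unambiguous.

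Setting $d=y_{i+1}(0)-y_i(0)>0$ and $m=m_i+m_{i+1}$, the first step is to subtract the kinematics expressions \eqref{kinematics} for $y_{i+1}(t)$ and $y_i(t)$ to obtain, for $t\in[0,t_c]$,
\begin{equation*}
f(t):=y_{i+1}(t)-y_i(t)=d-\big(y_i'(0)-y_{i+1}'(0)\big)t+\frac{m}{4}t^2,
\end{equation*}
a quadratic opening upward. The second step is the key algebraic observation: using $t_c=2\sqrt{d/m}$, direct substitution yields
\begin{equation*}
f(t_c)=2\sqrt{d/m}\,\Big(\sqrt{dm}-\big(y_i'(0)-y_{i+1}'(0)\big)\Big),\qquad f'(t_c)=\sqrt{dm}-\big(y_i'(0)-y_{i+1}'(0)\big),
\end{equation*}
so that $f(t_c)=2\sqrt{d/m}\cdot f'(t_c)$. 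In particular $f(t_c)$ and $f'(t_c)$ have the same sign.

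The final step is to invoke the hypotheses. By assumption $f(t)>0$ on $[0,t_c)$ (no earlier collision between the two particles), and $f(t_c)\neq 0$ (no collision at $t_c$); continuity then forces $f(t_c)>0$. Combined with the proportionality above, this gives $f'(t_c)>0$, which is precisely $y_{i+1}'(t_c)-y_i'(t_c)>0$.

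I do not anticipate a substantive obstacle here: the only point requiring care is justifying that \eqref{kinematics} remains valid for $p_i$ and $p_{i+1}$ on all of $[0,t_c)$ despite possible collisions among other particles, which follows from the remark above. One may alternatively phrase the argument via Lemma \ref{perfect-adjacent-particles}: the equation $f(t_c)=0$ is equivalent to condition (iii) of that lemma and would force a glancing collision at $t_c$, contradicting the hypothesis; the sign of $f(t_c)$ then determines the sign of $f'(t_c)$ through the identity $f(t_c)=2\sqrt{d/m}\,f'(t_c)$ just established.
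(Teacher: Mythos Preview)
Your proof is correct and follows essentially the same route as the paper: both compute the relative trajectory as the quadratic $f(t)=d-(y_i'(0)-y_{i+1}'(0))t+\tfrac{m}{4}t^2$ and evaluate at $t_c$ to find $f'(t_c)=\sqrt{dm}-(y_i'(0)-y_{i+1}'(0))$. The only cosmetic difference is that the paper first invokes Lemma~\ref{perfect-adjacent-particles} to bound $y_i'(0)-y_{i+1}'(0)<\sqrt{dm}$ directly, whereas you package the same inequality as the positivity of $f(t_c)$ together with the identity $f(t_c)=2\sqrt{d/m}\,f'(t_c)$; your closing remark already identifies this equivalence.
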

        \begin{proof}
            In order to not have collided by $t_c$, the particles must have less initial relative velocity than they would need to collide at $t_c$ barring earlier collisions with other particles. Thus, by Lemma \ref{perfect-adjacent-particles},
            \begin{align} \label{rel_velocity_bound}
                y'_i(0) - y'_{i+1}(0) < \sqrt{ \big( y_{i+1}(0) - y_i(0) \big) ( m_i + m_{i+1})}.
            \end{align}
            Now, let $k = \sum_{j < i}{m_j} - \sum_{j > i + 1}{m_j}$. Then, for $t < t_c$, $y_i''(t)$ is given by $\frac{k-m_{i+1}}{2}$, and $y_{i+1}''(t)$ is given by $\frac{k+m_{i}}{2}$. Thus, by the derivative of \eqref{position_double_integral}, we have
            \begin{align*}
                y'_i(t) = y'_i(0) + \frac{k - m_{i+1}}{2} t,\quad  
                y'_{i+1}(t) = y'_{i+1}(0) + \frac{k + m_i}{2} t
            \end{align*}
            for $t<t_c$. By continuity of velocities (a consequence of Remark \ref{lipschitz}), these statements hold for $t=t_c$ as well. From this fact, and \eqref{rel_velocity_bound}, the proof is completed:
            \begin{align*}
                y'_{i+1}(t_c) - y'_i(t_c) 
                &= \frac{m_i + m_{i+1}}{2} t_c - \big( y'_i(0) - y'_{i+1}(0) \big) \\
                &= \left( m_i + m_{i+1} \right) \sqrt{\frac{y'_{i+1}(0) - y'_{i}(0)}{m_i + m_{i+1}}} - \big( y'_i(0) - y'_{i+1}(0) \big) \\
                & = \sqrt{\big( y'_{i+1}(0) - y'_{i}(0) \big) \big( m_i + m_{i+1} \big)} - \big( y'_i(0) - y'_{i+1}(0) \big) 
                > 0. \qedhere
            \end{align*}
        \end{proof}
            We can now prove the main result of the section, by showing that in a perfect solution the pair of particles which minimize $t_c$ given in Lemma \ref{perfect-adjacent-particles} must be the first pair to collide. This allows us to induct on the number of particles by applying results from Section \ref{particle-combining-section} to this first collision.
        \begin{theorem} \label{perfect_e_and_u}
            Given an initial mass distribution $\rho_0\in\mathcal{P}(\mathbb{R})$ of finitely many particles with $0$ center of mass and $0$ total momentum, there is a unique initial velocity $v_0\in L^2(\rho_0)$ which yields a perfect DSPS.
        \end{theorem}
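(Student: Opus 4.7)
The plan is strong induction on $n$, the number of particles. For $n=1$, the zero center of mass and zero total momentum conditions force the single particle to sit at the origin with $v_1=0$, which is the trivial equilibrium and obviously the unique perfect solution.

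For the inductive step with $n \ge 2$, the strategy is to analyze the first collision in any candidate perfect DSPS via Lemma \ref{perfect-adjacent-particles}. Set $t_c(i) := 2\sqrt{(y_{i+1}(0)-y_i(0))/(m_i+m_{i+1})}$, $T_1 := \min_i t_c(i)$, and $I := \{i : t_c(i) = T_1\}$. Because any glancing collision of adjacent particles on their initial parabolic trajectories occurs at $t_c(i)$, and because by Lemma \ref{Negative Relative Velocity} a pair $(p_i, p_{i+1})$ that fails to collide by $t_c(i)$ is moving apart at that time (so cannot re-approach without an intervening collision involving some other particle, which would be a still earlier first collision), the first collision in a perfect DSPS must occur at time $T_1$ and involve precisely the pairs indexed by $I$. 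Lemma \ref{perfect-adjacent-particles}(ii) then pins down $v_i - v_{i+1}$ for each $i \in I$ via the relation \eqref{veloc-perfect}.

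Applying Lemma \ref{PC} at time $T_1$ replaces the colliding blocks by single particles at their centers of mass, yielding a configuration of $n - |I|$ particles that inherits zero center of mass and zero total momentum by conservation and is itself a perfect DSPS (since the original is, and the two evolutions agree for $t \ge T_1$ by Lemma \ref{PC}). By the inductive hypothesis (applied after a time-translation justified by Proposition \ref{properties}, item \ref{galilean-invariance}), the initial velocities of this reduced system at time $T_1$ are uniquely determined. Back-propagating through $[0,T_1]$ via the kinematic law \eqref{kinematics}, together with the glancing constraints \eqref{veloc-perfect} for $i \in I$, recovers $v_0$ uniquely. Existence is obtained by running the same construction forward: impose \eqref{veloc-perfect} for the pairs in $I$, invoke the inductive hypothesis on the reduced system at time $T_1$ to obtain its unique perfect initialization, and back-propagate; the minimality of $T_1$ combined with Lemma \ref{Negative Relative Velocity} (applied to pairs $i \notin I$, which satisfy $t_c(i) > T_1$) guarantees that no other collisions occur in $(0, T_1)$, so the construction indeed yields a perfect DSPS.

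The main obstacle is the case $|I| > 1$ of multiple simultaneous first collisions, together with the careful argument that no pair with $t_c(i) < T_1$ can appear in a perfect DSPS. The combinatorial first difficulty is manageable because the pairs in $I$ are adjacency-disjoint, so Lemma \ref{PC} decomposes the simultaneous glancing mergers into independent blocks that can be processed in parallel; the zero-momentum constraint ties them together globally but remains consistent thanks to conservation of momentum through each glancing collision. The second, subtler, difficulty is making rigorous the intuitive claim that a pair whose initial gap/mass ratio forces an early $t_c$ cannot be "skipped" in a perfect evolution: this ultimately uses the Hamiltonian constraint $H \equiv 0$ from Corollary \ref{energy-perfect}, the final condition $\rho_\infty = \delta_0$, and a tracking of how any premature drift-apart would propagate through later glancing mergers in a way inconsistent with the inductively constructed reduced perfect solution.
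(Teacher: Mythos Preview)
Your inductive strategy—identify the first collision time $T_1 = \min_i t_c(i)$, force glancing collision(s) there via Lemma~\ref{perfect-adjacent-particles}, reduce via Lemma~\ref{PC}, and invoke the inductive hypothesis—is precisely the paper's approach. The paper differs only in that it picks \emph{one} minimizing index $j$ (even in the case of ties) and reduces the particle count by exactly one at each inductive step, rather than merging all of $I$ simultaneously. This is the cleaner choice: it sidesteps the combinatorics you flag in your final paragraph, and in particular avoids your claim that ``the pairs in $I$ are adjacency-disjoint,'' which is false as stated (if $i, i+1 \in I$ then the pairs $(p_i,p_{i+1})$ and $(p_{i+1},p_{i+2})$ share a particle; what is true is that maximal runs of consecutive indices in $I$ form disjoint blocks).

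Your last paragraph introduces machinery the paper does not need. The argument that the minimal pair $(p_j, p_{j+1})$ \emph{must} collide at $t_m$ in any perfect solution requires neither the Hamiltonian nor any ``tracking of premature drift-apart through later glancing mergers.'' The paper's argument is direct: if they have not collided by $t_m$, Lemma~\ref{Negative Relative Velocity} gives $y_j'(t_m) < y_{j+1}'(t_m)$, and since $y_j'' \le y_{j+1}''$ for all later times (the left particle never has less mass to its right), integrating twice from $t_m$ via \eqref{position_double_integral} yields $y_j(t) < y_{j+1}(t)$ for all $t \ge t_m$, contradicting collapse to $\delta_0$. Your phrase ``no pair with $t_c(i) < T_1$ can appear'' is also vacuous, since $T_1$ is by definition the minimum of the $t_c(i)$. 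Finally, in the existence direction, your appeal to Lemma~\ref{Negative Relative Velocity} for pairs $i \notin I$ to rule out collisions in $(0, T_1)$ misapplies that lemma (it speaks to behavior \emph{at} $t_c(i) > T_1$, not before); the correct justification, which the paper leaves somewhat implicit, is that the particles outside the merged pair inherit both velocities and accelerations from the reduced perfect system, hence follow identical trajectories on $[0,t_m]$, and the reduced system has no collision before $t_m$.
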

        \begin{proof}
            We will induct on the number of particles. The base case of one particle is trivial; the only initial condition to check is a particle with $m=1$ at $y=0$, and $v=0$ is the desired unique velocity, as any other would violate $0$ center of mass and $0$ total momentum. 

            Now assume for induction that every solution with an initial mass distribution of $k-1$ particles has a uniquely determined initial velocity distribution which produces a perfect solution. Consider an arbitrary solution with an initial mass distribution of $k$ particles, with initial  masses and positions given by $m_1,\ldots, m_k$ and $y_1(0)\leq\ldots\leq y_k(0)$. The initial velocity $v_0$ will be determined $\rho_0$ almost everywhere by $v_0(y_i(0))=y_i'(0)$. We shall call this arbitrary distribution ``distribution A.'' By Lemma \ref{perfect-adjacent-particles}, $t_c=2\sqrt{\frac{y_{i+1}(0)-y_i(0)}{m_i+m_{i+1}}}$ is the time it takes for two adjacent particles to collide glancingly without any interfering collisions occurring. Since the first collision in the solution must involve two adjacent particles, and no earlier collisions can interfere with the first collision, this equation must hold for the first collision if the solution is to be perfect. Now consider the pair of adjacent particles for which $t_c$ is minimized. The pair of particles which could potentially glancingly collide in the least time will be the $j$th and $(j+1)$th, where $j$ minimizes the quantity
            \begin{align*}
                \frac{y_{i+1}(0)-y_i(0)}{m_i+m_{i+1}}
            \end{align*}
            for $1\le i \le k-1$. Since this pair minimizes $t_c$, which must give the time of the first collision in a perfect solution, the minimum time at which any collision can occur in a perfect solution is
            \begin{align*}
                t_m=2\sqrt{\frac{y_{j+1}(0)-y_j(0)}{m_j+m_{j+1}}}.
            \end{align*}
            We claim that, if a solution is to be perfect, this earliest potential collision time must be an actual collision time;  $y_j(t_m)=y_{j+1}(t_m)$. Note that in the case of multiple $i$ yielding a minimum, multiple collisions will happen at time $t_m$, and we may pick any of the minimizing $i$.
            
            These particles cannot have collided glancingly before $t_m$ due to Lemma \ref{perfect-adjacent-particles}. We will show that they cannot collide glancingly at $t > t_m$ in a perfect solution, leaving as the only option collision at precisely $t_m$. Assume for contradiction that their collision occurs after $t_m$ in a perfect solution. Then we must have $y_j(t_m) < y_{j+1}(t_m)$. By Lemma \ref{Negative Relative Velocity}, this implies that $y_j'(t_m) < y_{j+1}'(t_m)$. Also, $y_j''(t) \le y_{j+1}''(t)$ for all $t \ge t_m$, as the particle on the left will always have more mass to its right and less mass to its left. Viewing the solution as having initial conditions at $t_m$ (this uses the obvious ``semigroup'' property of the DSPS), the three above inequalities become $y_j(0) < y_{j+1}(0)$, $y_j'(0) < y_{j+1}'(0)$, and $y_j''(t) \le y_{j+1}''(t)$ for all $t$. With these inequalities, it is clear using \eqref{position_double_integral} that $y_j(t) < y_{j+1}(t)$ for all $t$. Translating back through the semigroup property, this tells us that $y_j(t) < y_{j+1}(t)$ for all $t \ge t_m$. That is, the particles do not collide after $t_m$. This is a contradiction, so our assumption that these particles collide after $t_m$ in a perfect solution must have been false. Thus it is impossible in a perfect solution for these particles to collide after $t_m$. Since the particles also cannot collide before $t_m$ in a perfect solution, collision at $t_m$ is necessary if the solution is to be perfect; $y_j(t_m)=y_{j+1}(t_m)$. Since no interfering collisions can happen before this time, we also get from Lemma \ref{perfect-adjacent-particles} that
            \begin{align}\label{Velocity Condition}
                y_j'(0)-y_{j+1}'(0)= \sqrt{\big(y_{j+1}(0)-y_j(0)\big)\big(m_j+m_{j+1}\big)}.
            \end{align} 
            Now, we can construct a dependent solution with initial $k-1$ particle distribution ``distribution B'' given by $m_i^*$ and $y_i^*(0)$, where $m_i^* = m_i$ and $y_i^*(0) = y_i(0)$ for $i<j$, $m_j^* = \frac{m_j+m_{j+1}}{2}$ and $y_j^*(0) = \frac{m_jy_j(0)+m_{j+1}y_{j+1}(0)}{m_j+m_{j+1}}$, and $m_i^* = m_{i+1}$ and $y_i^*(0) = y_{i+1}(0)$ for $i>j$. By our inductive hypothesis, there is a unique set of initial velocities $(y_i^*)'(0)$ which will yield a perfect solution when applied to distribution B. 

            We will now proceed to validating our existence and uniqueness claim for distribution A. By Lemma \ref{PC1}., distributions A and B will yield identical solutions for all $t \ge t_m$ provided that $y_i'(0) = (y_i^*)'(0)$ for $i<j$, $\frac{m_jy_j'(0) + m_{j+1}y_{j+1}'(0)}{m_j+m_{j+1}} = (y_j^*)'(0)$, and $y_i'(0) = (y_{i-1}^*)'(0)$ for $i>j+1$. As we have discovered, \eqref{Velocity Condition} is necessary for distribution A to yield a perfect solution. This and $\frac{m_jy_j'(0) + m_{j+1}y_{j+1}'(0)}{m_j+m_{j+1}} = (y_j^*)'(0)$ are both true only for $y_j'(0) = {y_j^*}'(0) + m_{j+1}\sqrt{\frac{y_{j+1}(0)-y_j(0)}{m_j+m_{j+1}}}$ and $y_{j+1}'(0) = (y_j^*)'(0) - m_{j}\sqrt{\frac{y_{j+1}(0)-y_j(0)}{m_j+m_{j+1}}}$. If we assign distribution A the set of velocities given by $y_i'(0) = (y_i^*)'(0)$ for $i<j$, $y_j'(0)$ and $y_{j+1}'(0)$ as specified above, and $y_i'(0) = (y_{i-1}^*)'(0)$ for $i>j+1$, we must have that the first collision occurs glancingly between particles $j$ and $j+1$ at $t_m$ as \eqref{Velocity Condition} is satisfied. Thus no collisions occur before $t_m$, and only glancing collisions occur at or after $t_m$ since all of the conditions for Lemma \ref{PC1} to apply are satisfied, and thus the solution is identical to the solution where the initial velocities $(y_i^*)'(0)$ are applied to distribution B, which is perfect by assumption and thus all collisions are glancing. 
            
            Now to handle uniqueness, assume for contradiction that some different set of initial velocities $\tilde{y}_i'(0)$ applied to distribution A yields a perfect solution. We define an analog of this to distribution B: let $(\tilde{y}_i^*)'(0) = \tilde{y}_i'(0)$ for $i<j$, $(\tilde{y}_j^*)'(0) = \frac{\tilde{y}_j'(0) + \tilde{y}_{j+1}'(0)}{m_j+m_{j+1}}$, and $(\tilde{y}_i^*)'(0) = \tilde{y}_{i+1}'(0)$ for $i>j$. Note that since \eqref{Velocity Condition} must still be satisfied, if $y_j'(0) \neq \tilde{y}_j'(0)$ or $y_{j+1}'(0) \neq \tilde{y}_{j+1}'(0)$ then we must also have $(y_j^*)'(0) \neq (\tilde{y}_j^*)'(0)$. Thus, the fact that $\tilde{y}_i'(0)$ is different from $y_i'(0)$ for some $i$ implies that $(\tilde{y}_i^*)'(0)$ is different from $(y_i^*)'(0)$ for some $i$. However, given the way we have constructed $(\tilde{y}_i^*)'(0)$, Lemma \ref{PC1} applies, and thus the solution given by applying initial velocities $\tilde{y}_i'(0)$ to distribution A is identical for all $t \ge t_m$ to the solution given by applying initial velocities $(\tilde{y}_i^*)'(0)$ to distribution B. Since this solution has no collisions before $t=t_m$, and is identical to the former (perfect) solution for $t \ge t_m$, all collisions occurring at or after $t_m$ (that is, all collisions in the solution) are glancing. Thus applying initial velocities $(\tilde{y}_i^*)'(0)$ to distribution B yields a perfect solution. However, as previously stated, $(\tilde{y}_i^*)'(0)$ is different from $(y_i^*)'(0)$. Given that both initial velocity distributions yield a perfect solution when applied to distribution B, this violates our induction assumption that distribution B had a unique velocity distribution yielding a perfect solution. Thus our assumption that some different set of initial velocities applied to distribution A yields a perfect solution must have been false. Thus under the assumption that every $k-1$ particle initial distribution has a unique velocity distribution yielding a perfect solution, we have proven that an arbitrary $k$ particle initial distribution has a unique velocity distribution yielding a perfect solution. By induction our claim is validated for all $k$, that is, for all finite particle distributions.
        \end{proof}
        The next section uses insight offered by the properties of perfect solutions to provide a sharp bound on the time of collapse into the equilibrium in terms of the size of the support of the initial distribution. First, we show that perfect DSPS cannot collapse in finite time if the initial support is unbounded.
    
        \begin{theorem}
            A perfect DSPS with unbounded initial support can only approach the equilibrium asymptotically (i.e., there is no finite-time collapse). 
        \end{theorem}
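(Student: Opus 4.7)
My plan is a proof by contradiction. Suppose $(\rho,v)$ is a perfect DSPS with unbounded initial support (say rightward, via symmetry) that collapses at some finite time $T<\infty$. By Galilean invariance (Proposition \ref{properties}, part \ref{galilean-invariance}) I reduce to the case of zero center of mass and zero total momentum, so the equilibrium is $\delta_0$ and, by Corollary \ref{energy-perfect}, the Hamiltonian is identically zero on $[0,T]$. The goal is to produce an upper bound of the form
\begin{equation*}
y_{i+1}(0)-y_i(0)\ \le\ C\,T^{2}\,(m_i+m_{i+1})
\end{equation*}
for every adjacent pair of initial particles. Summing telescopically would then give
\begin{equation*}
\sum_i \bigl(y_{i+1}(0)-y_i(0)\bigr)\ \le\ C\,T^{2}\sum_i(m_i+m_{i+1})\ \le\ 2CT^{2}<\infty,
\end{equation*}
contradicting unboundedness of the support.

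For the pair-wise bound, I would analyze the relative trajectory $r_i(t)=y_{i+1}(t)-y_i(t)$. A direct computation from the kinematic law \eqref{kinematics} (applied to whichever clump contains $p_i$ and $p_{i+1}$ at time $t$) shows that, as long as $p_i$ and $p_{i+1}$ remain in different clumps, $r_i''(t)=\tfrac12(M_{C_i}(t)+M_{C_{i+1}}(t))$, which lies in $[\tfrac12(m_i+m_{i+1}),\tfrac12]$. Write $\tau_i\le T$ for the merger time of the two particles; because every collision in a perfect DSPS is glancing (Theorem \ref{energy-constant}, Corollary \ref{energy-perfect}), one has $r_i(\tau_i)=0$ and $r_i'(\tau_i)=0$, so integrating twice from $\tau_i$ backward yields
\begin{equation*}
d_i:=r_i(0)\ =\ \int_0^{\tau_i} r_i''(s)(\tau_i-s)\,ds.
\end{equation*}
In the \emph{direct} case in which no interfering merger has occurred before $\tau_i$, $r_i''\equiv\tfrac12(m_i+m_{i+1})$ is constant, and this identity reduces (in agreement with Lemma \ref{perfect-adjacent-particles}) to $d_i=\tfrac14(m_i+m_{i+1})\tau_i^{2}\le\tfrac14(m_i+m_{i+1})T^{2}$, which is exactly the desired inequality.

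The hard part, and the main obstacle, is the \emph{indirect} case in which $p_i$ (or $p_{i+1}$) merges with other particles before colliding with its counterpart: then the clumps have grown, $r_i''(s)$ is strictly larger than $\tfrac12(m_i+m_{i+1})$, and the integral identity above only yields the lower bound $d_i\ge\tfrac14(m_i+m_{i+1})\tau_i^{2}$, which is the wrong direction. To extract an upper bound in this regime I would cascade Lemma \ref{PC}: each time a sub-group of particles fully merges into a single clump at its center of mass before interacting with the outside, replace that sub-group by a single particle and reapply the preceding analysis to the next pair of colliding clumps, using the uniform bound $r_i''\le\tfrac12$ on the full total mass. One then rewrites $d_i$ as a telescoping sum over the cascade of effective direct collisions that eventually bring $p_i$ and $p_{i+1}$ into the same clump, and controls each contribution by the corresponding effective masses and the residual time interval. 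The delicate bookkeeping in this cascade — making sure the summed bound is still of the form $CT^{2}(m_i+m_{i+1})$ without an accumulating constant that spoils the telescoping over infinitely many pairs — is the crux of the proof.
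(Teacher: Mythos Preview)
Your pair-wise target bound $y_{i+1}(0)-y_i(0)\le CT^{2}(m_i+m_{i+1})$ with an $i$-independent constant is actually \emph{false}, so the cascade you sketch cannot close. Here is a three-particle counterexample. Take masses $\epsilon,\epsilon,1-2\epsilon$ at positions $-D,0,\delta$ with $D\epsilon=(1-2\epsilon)\delta$ (zero center of mass). The unique perfect velocity assignment (Theorem~\ref{perfect_e_and_u}) has $p_2,p_3$ colliding first, then $p_1$ with the merged pair. Replacing $p_2,p_3$ by their center of mass via Lemma~\ref{PC} yields a two-particle perfect system of total mass $1$ and separation $(1-2\epsilon)\delta/\bigl(\epsilon(1-\epsilon)\bigr)$, so by Lemma~\ref{perfect-adjacent-particles} one has $T^{2}=4(1-2\epsilon)\delta/\bigl(\epsilon(1-\epsilon)\bigr)$. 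Meanwhile $d_1=y_2(0)-y_1(0)=D=(1-2\epsilon)\delta/\epsilon$ and $m_1+m_2=2\epsilon$, whence
\[
\frac{d_1}{T^{2}(m_1+m_2)}=\frac{1-\epsilon}{8\epsilon}\longrightarrow\infty\qquad(\epsilon\to0).
\]
Thus no uniform $C$ exists; the ``delicate bookkeeping'' you flag is not a technicality but a genuine obstruction. (Your own integral identity already shows the trouble: in the indirect case $d_1=\tfrac14(m_1+m_2)\tau_1^{2}+\tfrac14 m_3(\tau_1-\tau)^{2}$, and the second term is of order $T^{2}$ regardless of how small $m_1+m_2$ is.)

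The paper's argument sidesteps pairs entirely and is far shorter: it works with a \emph{single} particle. Since in a perfect DSPS every trajectory has Lipschitz velocity with $|y''|\le\tfrac12$ a.e.\ (Remark~\ref{lipschitz}), and since at the equilibrium time $t_E$ the particle satisfies $y(t_E)=0$ and $y'(t_E)=0$, two integrations backward from $t_E$ give $|y(0)|\le\tfrac14 t_E^{2}$. The paper instead integrates forward from $0$, obtaining two crude bounds (one on the first time $y$ hits zero, one on the first time $y'$ hits zero) and combining them to reach $t_E>\sqrt{y(0)}$. Either way, any particle starting farther than $t_E^{2}$ from the origin cannot be at equilibrium by time $t_E$, and unbounded support furnishes such particles. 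No relative-trajectory analysis, no Lemma~\ref{PC}, and no telescoping are needed.
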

    
        \begin{proof}
            We will prove this by creating a bound for the time it takes a particle to come to equilibrium in terms of the distance that particle starts from origin. In the case of unbounded support, we can find particles arbitrarily far from origin, which will provide an arbitrarily high bound on the time it takes for such a system to collapse, thus excluding the possibility of finite time collapse.
            
            We accomplish this by finding two preliminary bounds. Firstly, note that if the system were to reach equilibrium, then every particle must take a path from its starting position to $y=0$, eventually reaching it with zero velocity. This is the basis for our two bounds: firstly, that the first time a given particle crosses $y=0$ must be less than or equal to the time it takes for that particle to reach $y=0$ and stay there for all time. And secondly, the first time a particle reaches zero velocity must be less than or equal to the time it takes for this particle to reach zero velocity at $y=0$ and stay there for all time. We will assume that the initial distribution is unbounded on the right; the proof would follow symmetrically if it were so on the left.
            
            \textit{Bound 1:} Consider a particle whose location is given by $y(t)$. We will assume without loss of generality that $y(0) > 0$. We aim to place a lower bound on the time which this particle will take to first cross $y=0$. 
            Because $y'(t)$ is Lipschitz continuous by Remark \ref{lipschitz}\footnote{Since the Hamiltonian of a perfect DSPS must be constant, (Definition \ref{perfect-generalization}) there are no non-glancing collisions. (This can be argued similarly to the proof of Proposition \ref{KE-discontinuity}, just noting that if a trajectory is not left- or right-differentiable, the Hamiltonian is not constant.) Accordingly, we shall make use of Remark \ref{lipschitz} throughout this proof.} and $y''(t)>-1/2$ for a.e. $t$ we integrate twice to get
            \begin{align*}
                y(t) > -\frac{1}{4}t^2 + ty'(0) + y(0)\ \mbox{ for all }\ t>0.
            \end{align*}
            The positive root of the quadratic function on the right will now be our time bound for the actual time of equilibrium, denoted by $t_E$, meaning our first bound is
            \begin{align*}
                t_E > 2\left(y'(0) + \sqrt{y'(0)^2+y(0)}\right).
            \end{align*}
            
            \textit{Bound 2:} Given again a particle with $y(0) > 0$, we now aim to bound the time it takes for this particle to first reach $y'(t)=0$. We first use the Lipschitz continuity of $y'(t)$ again, along with $|y''(t)|<1/2$ a.e., to say
            \begin{align*}
                \left|y'(t)-y'(0)\right|&<\frac{1}{2}t\ \mbox{ for all }\ t>0.
            \end{align*}
            Setting $y'(t) = 0$, we find that $2\left|y'(0)\right| <t$. This means the time at which the particle first reaches zero velocity is greater than $2\left|y'(0)\right|$, meaning that our second bound is $t_E > 2\left|y'(0)\right|$.
            
            So, we have simultaneously achieved the bounds $t_E > 2\left(y'(0) + \sqrt{y'(0)^2+y(0)}\right)$ and $t_E > 2\left|y'(0)\right| > -y'(0)$. Adding these bounds gives us $t_E>\sqrt{y'(0)^2+y(0)}$, so $t_E>\sqrt{y(0)}$. Since we have particles with initial positions arbitrarily far from the origin, we conclude $t_E=\infty$. 
            \end{proof}
\section{Finite Time Collapse to Equilibrium}
    \subsection{A Quadratic Bound on Compactly Supported Solutions}\label{quadratic-section}
    
        Since perfect solutions are energetically ``most efficient,'' one would expect them to be the slowest to converge. That is to say, one would expect the system with least energy to be the one that converges in the longest amount of time. Any solution with the same initial distribution (so the same initial potential) given more kinetic energy \textit{that still converges} would be expected to converge faster. This intuition leads to the following bound:\footnote{See section \ref{conjectures-section} for discussion on these results in the context of GSPS.}
    
        \begin{theorem}\label{quad-envelope}
            Let $(\rho,v)$ be a finite DSPS to \eqref{PEP}, with $\rho_0$ compactly supported and with 0 center of mass and 0 total momentum. Let $y_L(t)=\inf\spt(\rho_t)$ and $y_R(t)=\sup\spt(\rho_t)$. Then $(\rho,v)$ goes to equilibrium if and only if $\spt \rho_t \subseteq [f_L(t),f_R(t)]$ for all t, where $f_L(t)=y_L(0)f(t)$ and $f_R(t)=y_R(0)f(t)$ for
            \begin{align}\label{quad-envelope1}
                f(t)&=\frac{\bigg[\left(2\sqrt{y_R(0)-y_L(0)}-t\right)_+\bigg]^2}{4\big(y_R(0)-y_L(0)\big)}.
            \end{align} 
        \end{theorem}

        \begin{figure}
            \centering
            \begin{tikzpicture}[scale=1]
                \draw (-2-3,0)--(2-3,0);
                \draw (0-3,-0.25)--(0-3,3.5);
                
                \draw (0.25-3,3.25) node {$t$};
                \draw (1.7-3,0.25) node {$y$};
            
                \draw plot[smooth, domain=-1-3:-0.00276972-3] ({\x}, {-2*sqrt(-1.3333*(\x+3)+5.7777)++5.3333});
                \draw[fill] (-1-3,0) circle [radius=2.3*0.5pt];

                \draw plot[smooth, domain=0.00714432-3:1-3] ({\x}, {-2*sqrt(3*(\x+3)+33)+12});
                \draw[fill] (1-3,0) circle [radius=2.3*0.28867pt];

                \draw plot[smooth, domain=0.00714432-3:0.25-3] ({\x}, {-2*sqrt(6*(\x+3)+15/2)+6});
                \draw[fill] (0.25-3,0) circle [radius=2.3*0.816496pt];

                \draw plot[smooth, domain=-0.0027697-3:0.010125-3] ({\x}, {-2*sqrt(4*(\x+3)+7.50664)+6});
                
                \draw[fill] (0-3,0.525) circle [radius=2.3*1pt];

                \draw [dashed] plot[smooth, domain=-1-3:0-3] ({\x}, {2.8284271*(1-sqrt(-(\x+3)))});
                \draw [dashed] plot[smooth, domain=0-3:1-3] ({\x}, {2.8284271*(1-sqrt(\x+3))});

                
                \draw (-2+3,0)--(2+3,0);
                \draw (0+3,-0.25)--(0+3,3.5);

                \draw (0.25+3,3.375) node {$t$};
                \draw (1.7+3,0.25) node {$y$};

                \draw[fill] (-1+3,0) circle [radius=1.5pt];
                \draw[fill] (-0.75+3,3.335) circle [radius=1.5pt];

                \draw[fill] (1+3,0) circle [radius=1.5pt];
                \draw[fill] (0.6+3,2.894427) circle [radius=1.5pt];

                \draw plot[smooth, domain=-1+3:-0.5+3] ({\x}, {-2*sqrt(-2*(\x-3)-1)+2});
                \draw plot[smooth, domain=-0.75+3:-0.5+3] ({\x}, {2*sqrt(-2*(\x-3)-1)+1.95-0.0225});

                \draw plot[smooth, domain=0.5+3:1+3] ({\x}, {-2*sqrt(2*(\x-3)-1)+2});
                \draw plot[smooth, domain=0.5+3:0.6+3] ({\x}, {2*sqrt(2*(\x-3)-1)+1.955});

                \draw [dashed] plot[smooth, domain=-1+3:0+3] ({\x}, {2.8284271*(1-sqrt(-(\x-3)))});
                \draw [dashed] plot[smooth, domain=0+3:1+3] ({\x}, {2.8284271*(1-sqrt(\x-3))});

                \draw[white,fill=white] (2.4,2.75) circle (0.222);
                
                \draw(-4,-0.2)--(-4,0.2);
                \draw (-4,-0.35) node {$\scriptstyle y_L(0)$};
                \draw(-2,-0.2)--(-2,0.2);
                \draw (-2,-0.35) node {$\scriptstyle y_R(0)$};
                \draw(2,-0.2)--(2,0.2);
                \draw (2,-0.35) node {$\scriptstyle y_L(0)$};
                \draw (4,-0.35) node {$\scriptstyle y_R(0)$};
                \draw(4,-0.2)--(4,0.2);
                
                \draw(3-0.2,2.82)--(3+0.2,2.82);
                
                \draw(-3-0.2,2.82)--(-3+0.2,2.82);
                \draw (-4.2,2.8) node {$\scriptstyle2\sqrt{y_R(0)-y_L(0)}$};

                \draw(3-0.2,2.82)--(3+0.2,2.82);
                \draw (1.8,2.75) node {$\scriptstyle2\sqrt{y_R(0)-y_L(0)}$};

            \end{tikzpicture}
            \caption{The quadratic bound described in Theorem \ref{quad-envelope} is shown in dashed lines. Left: the solution stays inside the bound and converges to equilibrium. Right: The solution exits the bound and does not converge to equilibrium.}
            \label{fig:quadratic-bound}
        \end{figure}
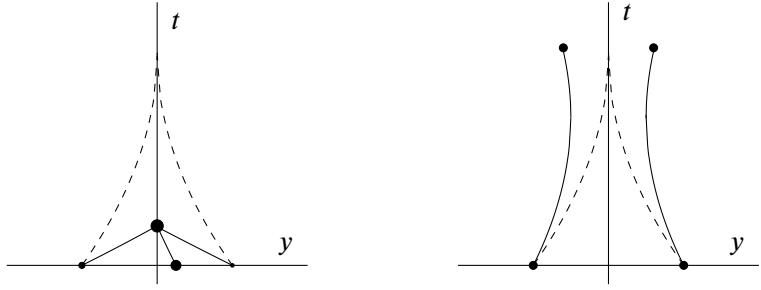 
        
        This bound is sharp; it traces out the trajectory of a two-particle perfect SPS, where the particles begin at $y_L(0)$ and $y_R(0)$, and the mass is distributed among them in order to make the center of mass 0.\footnote{Compare this bound to the parabolic bound on evolution of \emph{any} solution to the \emph{attractive} PEP equations presented in Theorem 3.11 of \cite{BLPTW}. One would expect that the concavity of the bounding parabolas is inverted; interestingly, this bound no longer depends on $\|v_0\|_{L^\infty(\rho_0)}$, as solutions with enough outward velocity will simply not converge to equilibrium.}
    
        \begin{proof}
            Define the critical mass of the leftmost particle, $c_L:=\frac{y_R(0)}{y_R(0)-y_L(0)}$, and the critical mass of the rightmost particle, $c_R:=\frac{-y_L(0)}{y_R(0)-y_L(0)}$. Without loss of generality we shall assume the rightmost particle leaves the envelope described in the theorem. We will draw a contradiction in two cases: one where the rightmost particle gains more mass than a critical mass before coming to $y=0$, and one where it never gains such mass ("gaining mass" intuitively denotes the process of the rightmost particle being redefined to a higher mass particle each time there is a collision involving the previous rightmost).
            
            {\textit{Case 1: The rightmost particle never gains more than critical mass after leaving  the envelope.}}
            
            $y_R(t)$ is continuous, so consider the time T such that $y_R(T)=f_R(T)$, $y'_R(T)>f'_R(T)$ and $y''_R(T)\geq f''_R(T)$. This means we are assuming that the rightmost particle is on the boundary of the envelope with enough velocity to escape the envelope at time $T$ (before reaching equilibrium). Note that since $y_R(T)$ is the position of the rightmost particle, collisions can only increase $y'_R(t)$. Additionally, since  the rightmost particle never gains more than the critical mass we can say that $y''_R(t)\geq f''_R(t)$ at all times, since $f_R''(t)$ is the acceleration of the rightmost particle if it had exactly the critical mass (less mass than this means that more mass is to its left to exert a higher acceleration).
            
            Consider the distance between $y_R(t)$ and $f_R(t)$. We will show that this increases at all times. Let $g(t):=y_R(t)-f_R(t)$. Note that $f_R(t)$ is twice continuously differentiable always, $y_R(t)$ is always right differentiable and has second derivative that is piecewise-constant. 
            
            So, $g(T)=0$ and $g'(t)=y'_R(t)-f'_R(t)$, thus by assumption $g'(T)>0$. Finally, $y''_R(t)-f''_R(t)\geq0$ always, hence $g'(t)$ is always nondecreasing and is positive for $t=T$, so $g'(t)$ is positive for all times $t\geq T$. Thus $y_R(t)>f_R(t)$ for all times $t\geq T$. 
            
    
            $f_R(t)\geq0$ at all times and $y_R(t)>f_R(t)$ for all times $t\geq T$, before which the system had not come to equilibrium, hence $y_R(t)>0$ for all times $t\geq T$. But in order to reach equilibrium, there must be a time $T'$ occurring after $T$ such that $y_R(T')=0$. Hence this solution can never reach equilibrium. 
            

            {\textit{Case 2:  The rightmost particle gains more than critical mass after leaving  the envelope.}}
            
            Assume that the rightmost particle leaves the envelope, and that it has more than the critical mass at some time. There is a time $T$ at which the particle has more than the critical mass, and is simultaneously at or outside the envelope.\footnote{The only way that this could be contradicted is if the particle leaves the envelope with less than the critical mass, then reenters, then gains more than the critical mass; this is impossible by the reasoning in case 1.} At this time $T$, we have $y_R(T)\ge f_R(T)$ and $m_R>\frac{-y_L(0)}{y_R(0)-y_L(0)}$, and therefore the total amount of mass to the left of $m_R$ is less than $1-m_R=\frac{y_R(0)}{y_R(0)-y_L(0)}$. We need the center of mass of the system to be $0$ at all times. Assume for contradiction that the leftmost mass is within the envelope on the left side. Then the center of mass would be minimized if all of the mass not in $m_R$ were concentrated at the leftmost point. That is, $y_L(T)=f_L(T)$ and $m_L=1-m_R< \frac{y_R(0)}{y_R(0)-y_L(0)}$. We can bound the center of mass like so: 
            \begin{align*}
                y_{cm}&> f_L(T)\frac{y_R(0)}{y_R(0)-y_L(0)} + f_R(T)\frac{-y_L(0)}{y_R(0)-y_L(0)}=f(t)\bigg[\frac{y_L(0)y_R(0)}{y_R(0)-y_L(0)} + \frac{-y_L(0)y_R(0)}{y_R(0)-y_L(0)}\bigg]=0.
            \end{align*}
            This is a contradiction with the fact that our center of mass is equal to 0, so our leftmost mass must have left the envelope on the left side. Also, the left mass never gains more than its critical mass without reaching equilibrium, as this would contradict $m_L+m_R \le 1$. Thus by the reflection of case 1, the leftmost particle will never reach $y=0$, hence equilibrium cannot be reached.
        \end{proof}
    
        \begin{corollary}\label{time-bound-corollary}
            Any finite DSPS to \eqref{PEP} that reaches equilibrium must do so by time $2\sqrt{y_R(0)-y_L(0)}$, the time at which the bounding parabolas intersect at $y=0$.
        \end{corollary}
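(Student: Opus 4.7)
The plan is to deduce the corollary directly from Theorem \ref{quad-envelope} by evaluating the bounding envelope at the critical time $t_* = 2\sqrt{y_R(0)-y_L(0)}$. Observe that the function $f$ appearing in the theorem is defined as
\begin{align*}
f(t)=\frac{\big[(2\sqrt{y_R(0)-y_L(0)}-t)_+\big]^2}{4(y_R(0)-y_L(0))},
\end{align*}
and a direct computation shows $f(t_*)=0$, hence $f_L(t_*)=y_L(0)f(t_*)=0$ and $f_R(t_*)=y_R(0)f(t_*)=0$ as well.

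Suppose the finite DSPS $(\rho,v)$ reaches equilibrium. By Theorem \ref{quad-envelope}, we have the containment $\spt\rho_t\subseteq[f_L(t),f_R(t)]$ for all $t\geq 0$. Evaluating at $t=t_*$ gives $\spt\rho_{t_*}\subseteq[0,0]=\{0\}$, and since $\rho_{t_*}$ is a Borel probability measure, this forces $\rho_{t_*}=\delta_0$. By Proposition \ref{properties}, item \ref{only-equilibrium}, this is precisely the equilibrium state consistent with the zero-center-of-mass and zero-total-momentum hypothesis of Theorem \ref{quad-envelope}. Hence the solution has collapsed no later than $t_*$, as claimed.

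There is essentially no obstacle here, since the corollary is a quantitative read-off of the envelope bound — all the hard work was done in proving Theorem \ref{quad-envelope}. The only minor point worth spelling out is the geometric interpretation: $t_*$ is exactly the positive root of $2\sqrt{y_R(0)-y_L(0)}-t=0$, i.e.\ the moment the two bounding parabolas $f_L$ and $f_R$ meet tangentially at the origin, which is why the statement describes it as ``the time at which the bounding parabolas intersect at $y=0$.''
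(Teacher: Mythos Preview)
Your argument is correct and is precisely the intended deduction: the paper states the corollary without proof because it is an immediate read-off of Theorem~\ref{quad-envelope}, exactly as you present it. The only superfluous step is the appeal to Proposition~\ref{properties}, item~\ref{only-equilibrium}, since $\rho_{t_*}=\delta_0$ already establishes collapse by $t_*$ regardless of how equilibria are characterized.
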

        
        This two-particle solution which bounds other solutions can be thought of as the solution with a given center of mass and smallest closed interval containing the support which maximizes the variance. Noting that $W_2^2(\rho_t,\delta_0)$ is precisely the variance, we obtain the following corollary. 
        \begin{corollary}
            Let $(\rho_t,v_t)$ be a finite DSPS to \eqref{PEP} with $0$ center of mass and $0$ total momentum which is initially supported in $[y_L,y_R]$, and which goes to equilibrium. Then, 
            \begin{align}
                W_2(\rho_t,\delta_0)\leq\frac{\sqrt{-y_Ry_L}}{4(y_R-y_L)}\bigg[\big(2\sqrt{y_R-y_L}-t\big)_+\bigg]^2\ \mbox{ for all }\ t\geq0.
            \end{align}
        \end{corollary}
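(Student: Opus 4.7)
The plan is to combine Theorem \ref{quad-envelope} with an elementary maximum-variance lemma for distributions supported on an interval. By hypothesis and Theorem \ref{quad-envelope} we have $\spt \rho_t \subseteq [f_L(t), f_R(t)] = [y_L f(t),\, y_R f(t)]$ for every $t \geq 0$. Moreover, since both the initial center of mass and initial total momentum vanish, Proposition \ref{properties}, \ref{galilean-invariance} shows that the center of mass of $\rho_t$ is identically $0$, so that
$$W_2^2(\rho_t,\delta_0) = \int_{\mathbb{R}} y^2 \,\rho_t(dy)$$
is precisely the variance of $\rho_t$.

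Next, I would record the following one-line fact: if $\mu \in \mathcal{P}(\mathbb{R})$ is supported in $[a,b]$ with $a \leq 0 \leq b$ and $\int y\,\mu(dy) = 0$, then $\int y^2\,\mu(dy) \leq -ab$. This follows instantly by integrating the pointwise inequality $(y-a)(y-b) \leq 0$ valid on $\mathrm{supp}(\mu)$, expanding, and using the normalization $\int \mu(dy) = 1$ together with the mean-zero condition. Applying this with $a = y_L f(t)$ and $b = y_R f(t)$ (valid because $y_L \leq 0 \leq y_R$ follows from the center-of-mass condition, and $f(t) \geq 0$ by construction), we obtain
$$W_2^2(\rho_t,\delta_0) \leq -\,y_L y_R f(t)^2.$$
Taking square roots and substituting the explicit formula
$$f(t) = \frac{\bigl[(2\sqrt{y_R - y_L} - t)_+\bigr]^2}{4(y_R - y_L)}$$
yields the claimed bound.

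There is essentially no serious obstacle: the only things to check are the degenerate case $y_L = y_R$ (forcing $\rho_0 = \delta_0$ and making the bound trivial) and the sign conventions ensuring $-y_L y_R \geq 0$ inside the square root. Sharpness is automatic from the construction underlying Theorem \ref{quad-envelope}: equality in the variance lemma is attained exactly by the two-atom distribution with mass $y_R/(y_R - y_L)$ at $y_L f(t)$ and mass $-y_L/(y_R - y_L)$ at $y_R f(t)$, which is precisely the perfect two-particle SPS whose trajectories are $f_L$ and $f_R$.
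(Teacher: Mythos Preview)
Your proof is correct and follows exactly the approach the paper has in mind: the paper's ``proof'' is just the sentence preceding the corollary (the bounding two-particle solution maximizes variance among distributions with the given support interval and center of mass, and $W_2^2(\rho_t,\delta_0)$ is the variance), together with Theorem~\ref{quad-envelope}. You have simply made the variance-maximization step explicit via the inequality $(y-a)(y-b)\le 0$, which is precisely what the paper leaves implicit.
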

        This bound is sharp for the same reason as the quadratic bound. 
    \subsection{Equilibrium Conditions for Finite Particles}

        This section considers a strategy we term ``$k$-splitting.'' The general idea is to turn an $n$-particle system into a $2$-particle system and determine whether or not that converges, as $2$-particle systems have simple necessary and sufficient conditions in order to converge. 
        
        \begin{definition}
                    The $k$-split of a $n$ particle DSPS is the $2$-particle system in which one particle is placed at the center of mass of the $k$ leftmost particles with their total mass and velocity given by their total momentum, and one is placed at the center of mass of the $(n-k)$ rightmost particles with their total mass and velocity given by their total momentum.

                    That is, given $\rho_0=\sum_i m_i \delta_{y_i(0)}$ with $y_1(0)\leq y_2(0)\leq\ldots\leq y_n(0)$, we construct a two-particle system of $p_L^k$, $p_R^k$ with initial masses, positions, and velocities as follows: 
                    \begin{alignat*}{6}
                        &m^k_L&&=\sum_{j=1}^km_j, \quad&&y_L^k(0)&&=\frac{\sum_{j=1}^ky_j(0)m_j}{m_L^k}, \quad&&y^k_L{'}(0)&&=\frac{\sum_{j=1}^ky_j'(0)m_j}{m_L^k},
                        \intertext{and}
                        &m^k_R&&=\sum_{j=k+1}^n m_j, \quad&&y_R^k(0)&&=\frac{\sum_{j=k+1}^n y_j(0)m_j}{m_R^k}, \quad&&y^k_R{'}(0)&&=\frac{\sum_{j=k+1}^ny_j'(0)m_j}{m_R^k}~.
                    \end{alignat*}
        \end{definition}

            In some sense splitting is a ``coarse'' way of doing things, losing information, particularly on how the acceleration works. One can see that a general $k$-split will not satisfy the hypotheses of the previously stated particle-combining lemmas, and so will not behave precisely as the true system does; we may have $k$-splits which go to equilibrium for a solution which does not. 
    
        \begin{theorem}[Necessary Condition]\label{discrete-necessary}
            Let $\rho_t=\sum_i m_i\delta_{y_i(t)}$ be a finite DSPS with $y_1\leq\ldots\leq y_n$. The solution reaches equilibrium only if one of its $k$-splits reaches equilibrium. That is, it reaches equilibrium only if there exists an $l\in\mathbb{N}$ such that $$\frac{\sum_{i=1}^ly_i'(0)m_i}{\sum_{i=1}^l m_i}-\frac{\sum_{i=l+1}^ny_i'(0)m_i}{\sum_{i=l+1}^n m_i}\geq\sqrt{\frac{\sum_{i=l+1}^ny_i(0)m_i}{\sum_{i=l+1}^n m_i}-\frac{\sum_{i=1}^l y_i(0)m_i}{\sum_{i=1}^l m_i}}.$$
        \end{theorem}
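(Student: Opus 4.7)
The plan is to identify a specific index $l \in \{1,\dots,n-1\}$ for which the $l$-split reaches equilibrium, by exploiting the structure of the very last collision in the original DSPS. Assume $(\rho,v)$ collapses to equilibrium. Since a finite DSPS has only finitely many collisions, there is a last one at some time $T>0$; just before $T$ exactly two particles remain, and they merge at time $T$. Because the sticky condition preserves the relative ordering of the original particles, these two surviving particles must be the aggregates of $P_L := \{p_1,\dots,p_l\}$ and $P_R := \{p_{l+1},\dots,p_n\}$ for some $l \in \{1,\dots,n-1\}$. In particular, every collision in $[0,T)$ is internal to one of these two groups --- no cross-collision between $P_L$ and $P_R$ occurs before $T$, for such a collision would already have merged mass from both groups, contradicting the bipartition just before $T$.

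Because $P_L$ and $P_R$ interact only internally on $[0,T]$, Lemma \ref{PC1} applies to each group; with $M_L^l := \sum_{i=1}^l m_i$ and $M_R^l := \sum_{i=l+1}^n m_i$,
\begin{equation*}
y_{P_L}(t) \;=\; \frac{1}{M_L^l}\sum_{i=1}^l m_i\, g_i(t), \qquad t \in [0,T].
\end{equation*}
Expanding the ghost trajectory \eqref{ghost-trajectory} and using the cancellation $\sum_{i=1}^l m_i\bigl(\sum_{j<i,\,j\le l} m_j - \sum_{i<j\le l} m_j\bigr) = 0$ (the same symmetry-in-$i,j$ identity used at the end of the proof of Lemma \ref{PC2}) reduces this to
\begin{equation*}
y_{P_L}(t) \;=\; y_L^l(0) + t\,{y_L^l}'(0) - \tfrac{t^2}{4}\,M_R^l,
\end{equation*}
where $y_L^l(0)$ and ${y_L^l}'(0)$ are precisely the center-of-mass position and momentum-averaged velocity that define the left particle of the $l$-split. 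This matches exactly the trajectory of that particle (whose acceleration is $-M_R^l/2$, since only the mass $M_R^l$ lies to its right). An analogous computation for $P_R$ yields $y_{P_R}(t) = y_R^l(0) + t\,{y_R^l}'(0) + \tfrac{t^2}{4}\,M_L^l$, the trajectory of the right particle of the $l$-split.

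Since $y_{P_L}(T) = y_{P_R}(T)$ at the last collision of the original system, the two particles of the $l$-split also collide at time $T$, so the $l$-split reaches equilibrium. To recover the stated inequality, note that $M_L^l + M_R^l = 1$, so the separation in the $l$-split satisfies
\begin{equation*}
y_R^l(t) - y_L^l(t) \;=\; D_0 + t\,\Delta v + \tfrac{t^2}{4}, \qquad D_0 := y_R^l(0) - y_L^l(0) \ge 0,\ \ \Delta v := {y_R^l}'(0) - {y_L^l}'(0).
\end{equation*}
This quadratic has a real nonnegative root if and only if $-\Delta v \ge \sqrt{D_0}$, and substituting the definitions of $y_{L/R}^l(0)$ and ${y_{L/R}^l}'(0)$ reproduces exactly the inequality in the theorem. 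The one genuinely delicate point is the first paragraph --- selecting $l$ from the structure of the last collision and ruling out prior cross-collisions --- as everything downstream is an explicit kinematic computation enabled by Lemma \ref{PC1}.
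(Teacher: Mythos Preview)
Your approach is essentially the same as the paper's --- both isolate the final collision, take $l$ to be the size of the leftmost aggregate at that moment, and then invoke the particle-combining machinery of Section~\ref{particle-combining-section} to identify the $l$-split with the pair of centers of mass.  There is, however, one genuine slip: you assert that ``just before $T$ exactly two particles remain.''  This need not be true; the final collision may involve $r\ge 2$ aggregates meeting simultaneously (e.g.\ three symmetric particles all reaching the origin at once).  The paper's proof explicitly allows for this by writing ``assume that it occurs between $r$ particles $\tilde p_1,\ldots,\tilde p_r$'' and then defining $l$ as the number of original particles comprising $\tilde p_1$.  Your argument is easily repaired in the same way: take $l$ so that $P_L=\{p_1,\dots,p_l\}$ is the set of original particles that merge into the leftmost surviving aggregate just before $T$, and $P_R$ the rest; the no-cross-collision claim and everything downstream then goes through verbatim, since $P_R$ may contain several aggregates but none of them touches $P_L$ before $T$.

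On the technical side, the paper applies Lemma~\ref{PC} twice (once to $P_L$, once to $P_R$) to conclude directly that the $l$-split coincides with the original solution for $t\ge T$.  You instead invoke only Lemma~\ref{PC1} and carry out the explicit quadratic computation for $y_{P_L}$ and $y_{P_R}$, essentially re-deriving the content of Lemma~\ref{PC2} in this special case.  Both routes are fine; yours is a bit more hands-on, the paper's a bit cleaner.
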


        \begin{proof}
            Consider a solution that converges to equilibrium. Examine the final collision. Assume that it occurs between $r$ particles $\tilde p_1, \ldots , \tilde p_r$ (ordered by position just before collision) at $t=t_c$. Assume that $\tilde p_1$ is formed of $l$ initial particles. These must be the $l$ leftmost. Now, apply Lemma \ref{PC} with $P = \{p_1, \ldots ,p_l\}$ and $t_P=t_Q=t_c$. This yields a solution which is equivalent for $t \ge t_c$ to our original, but has the $l$ leftmost particles combined in the initial condition. Now apply Lemma \ref{PC} to this system, this time taking $P=\{p_{l+1}, \ldots , p_n\}$ and $t_P=t_Q=t_c$. This yields a third solution which is equivalent for $t \ge t_c$ to the second (and therefore the first), but has the $n-l$ rightmost particles in the original solution combined in its initial condition as well as the $l$ leftmost. But this third solution is exactly the $l$-split; it is a two-particle system with one particle at the center of mass of the $l$ leftmost with their total mass and momentum, and one at the center of mass of the $n-l$ rightmost with their total mass and momentum. Furthermore, for $t \ge t_c$, the $l$-split is equivalent to the original system—that is, it is a single particle in equilibrium. Thus the $l$-split converges.
        \end{proof}
        
        \begin{theorem}[Sufficient Condition]\label{discrete-sufficient}
            Let $\rho_t=\sum_i m_i\delta_{y_i(t)}$ be a finite DSPS with $y_1\leq\ldots\leq y_n$. If all of the solution's k-splits reach equilibrium, then the solution itself reaches equilibrium. That is, if $$\frac{\sum_{i=1}^ky_i'(0)m_i}{\sum_{i=1}^k m_i}-\frac{\sum_{i=k+1}^ny_i'(0)m_i}{\sum_{i=k+1}^n m_i}\geq\sqrt{\frac{\sum_{i=k+1}^ny_i(0)m_i}{\sum_{i=k+1}^n m_i}-\frac{\sum_{i=1}^k y_i(0)m_i}{\sum_{i=1}^k m_i}}$$ for every $k\in\{1,\ldots,n-1\}$ then the solution reaches equilibrium. 
        \end{theorem}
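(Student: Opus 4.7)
The plan is to prove the theorem by strong induction on the number of particles $n$. The base case $n=2$ is immediate: the only $k$-split (with $k=1$) is the system itself, so the hypothesis directly gives the conclusion.

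For the inductive step, assume the theorem holds for all finite DSPS with fewer than $n$ particles, and consider an $n$-particle DSPS $(\rho,v)$ satisfying the hypothesis. I would proceed in two main steps. First, I would show that at least one collision must occur in finite time. Let $t_c^{n-1}$ be the collision time of the $(n-1)$-split. A direct computation (of the sort carried out implicitly in the proof of Lemma \ref{PC1}) shows that the trajectory of the left particle of any $k$-split equals the center of mass of the ghost trajectories of the corresponding left group of particles in the original system, and by Lemma \ref{PC1} this coincides with the actual center of mass of that group as long as no cross-collision between the two groups has occurred. Applying this with the $(n-1)$-split: if no collision at all had occurred in the original system by $t_c^{n-1}$, then the center of mass of $\{p_1,\ldots,p_{n-1}\}$ would coincide with $y_n$ at that time, and since this center of mass is bounded above by $y_{n-1}\le y_n$, one would obtain $y_{n-1}(t_c^{n-1})=y_n(t_c^{n-1})$, itself a collision---a contradiction. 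Let $t^*$ be the time of the first collision in the original system.

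Second, I would show that the system after the collision at $t^*$ also satisfies the hypothesis, so that the inductive hypothesis applies. Suppose for simplicity that the first collision merges exactly one adjacent pair $p_j, p_{j+1}$ into a new particle $\tilde p_j$ (multiple simultaneous collisions are handled analogously). Using the semigroup property, reset the clock so the $(n-1)$-particle system $\{\tilde p_1,\ldots,\tilde p_{n-1}\}$ starts at time $0$. The crucial observation is that each $k'$-split of the new system corresponds to a $k$-split of the original system: for $k'<j$ the new $k'$-split matches the original $k'$-split, and for $k'\ge j$ it matches the original $(k'+1)$-split. In both cases, the initial position and velocity of each particle in the new split coincide with the position and velocity of the corresponding particle in the original split evaluated at $t^*$; this follows from $y_j(t^*)=y_{j+1}(t^*)$, conservation of momentum, and Lemma \ref{PC1}. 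This identification is valid because the collision at $t^*$ is never a cross-collision for the relevant original splits: for $k<j$ both $p_j, p_{j+1}$ lie in the right group, and for $k>j$ both lie in the left group. Therefore the corresponding original split has not collided by $t^*$ (strict inequality, since coincidence of the centers of mass at $t^*$ would itself force a cross-collision, absent by assumption), so the new split collides at time $t_c^{\mathrm{orig}}-t^*>0$. Thus the new system satisfies the hypothesis; by the inductive hypothesis it reaches equilibrium, and by the semigroup property together with Lemma \ref{PC} the original system does too.

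The main obstacle I anticipate is the bookkeeping of this split correspondence: carefully verifying via Lemma \ref{PC1} and momentum conservation that positions and velocities match at $t^*$ in all three cases $k'<j$, $k'=j$, $k'>j$, and then extending the argument to the case of several simultaneous collisions at $t^*$ (which shifts the index correspondence in a natural but notationally delicate way, and may also produce the degenerate scenario $t^*=t_c^{\mathrm{orig}}$ for some split, which however still amounts to the corresponding split ``colliding'' at time $0$ of the new clock).
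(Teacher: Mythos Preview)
Your approach is correct, but it is considerably more elaborate than the paper's. The paper gives a direct, non-inductive argument: it shows that for \emph{each} fixed $k$, the $k$-split reaching equilibrium forces the adjacent pair $p_k,p_{k+1}$ to collide. The reason is exactly the squeeze you use in your Step~1 (applied there only for $k=n-1$): as long as the groups $\{p_1,\ldots,p_k\}$ and $\{p_{k+1},\ldots,p_n\}$ have not exchanged particles, Lemma~\ref{PC2} gives $y_L^k(t)\le y_k(t)\le y_{k+1}(t)\le y_R^k(t)$, and equality $y_L^k(t_c)=y_R^k(t_c)$ at the split's collapse time forces $y_k(t_c)=y_{k+1}(t_c)$. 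Since this holds for \emph{every} $k$, all adjacent pairs collide and the system merges to a single particle. No induction, no tracking of which split of the post-collision system corresponds to which split of the original, and no case analysis for simultaneous collisions are needed.

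In effect, your Step~1 already contains the whole idea of the paper's proof; you then wrap it in an induction and a split-correspondence argument that, while correct, is unnecessary. What the paper's route buys is brevity and the avoidance of the bookkeeping you correctly anticipate as the ``main obstacle.'' What your route buys is a slightly more dynamical picture (explicitly following the system through its first collision), but at the cost of that bookkeeping. Both rest on the same core inequality.
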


        \begin{proof}
        We claim that the $k$-split reaching equilibrium implies that the $k$th and $(k+1)$th particles collide. This is sufficient to complete the proof as if all adjacent pairs of particles collide, all particles must end up as one.

       By the sticky condition, the $k$th and $(k+1)$th particles must collide before any other collisions occur between any of the $k-1$ leftmost particles and any of the $n-k-1$ rightmost particles. If the $k$th and $(k+1)$th particles meet before the $k$-split comes to equilibrium, we are done immediately, so assume without loss of generality that the $k$th and $(k+1)$th particles do not meet before the $k$-split comes to equilibrium. Now assume the $k$-split reaches equilibrium at time $t_c$. 

        Since $y_i(t)\leq y_k(t)$ for every $i\in\{1,\ldots,k\}$ and every t, we have that $\frac{\sum _{i=1}^k y_i(t)m_i}{\sum _{i=1}^k m_i}\leq \frac{y_k(t)\sum _{i=1}^km_i}{\sum _{i=1}^km_i}=y_k(t)$, and by the same argument $y_{k+1}(t)\leq \frac{\sum_{i=k+1}^n y_i(t)m_i}{\sum_{i=k+1}^n m_i}$. Since no collisions happen between any of the leftmost $k$ particles and any of the $n-k$ rightmost particles, Lemma \ref{PC2} applies to both groups. Since $p^*$ of Lemma \ref{PC2} is defined exactly the same as $p_L^k$ and $p_R^k$ for the respective groups, we have the respective equalities $y_L^k(t)=\frac{\sum_{i=1}^ky_i(t)m_i}{\sum_{i=1}^km_i}$ and $y_R^k(t)=\frac{\sum_{i=k+1}^ny_i(t)m_i}{\sum_{i=k+1}^nm_i}$ for $t \le t_c$. 
        
        But given the inequalities we've established above, for $t \le t_c$ we have
        \begin{align*}
            y_L^k(t)=\frac{\sum_{i=1}^ky_i(t)m_i}{\sum_{i=1}^km_i}\leq y_k(t)\leq y_{k+1}(t)\leq \frac{\sum_{i=k+1}^ny_i(t)m_i}{\sum_{i=k+1}^nm_i}=y_R^k(t).
        \end{align*}
        At $t=t_c$, the convergence of the $k$-split means that the $y_L^k(t_c)=y_R^k(t_c)$, and hence all inequalities are equalities, so $y_k(t_c)=y_{k+1}(t_c)$. Thus the $k$th and $(k+1)$th particles have collided by $t_c$.
    \end{proof}

\subsection{The general case}
        We will now extend these results from the discrete case to general probability measures. Let $\mu\in\mathcal{P}_2(\mathbb{R})$ which has at least two points in its support, $v\in L^2(\mu)$ and pick $y\in\mathbb{R}$ such that $0<M^\mu(y):=\mu((-\infty,y])<1$; in other words, $y\in \big(M^{\mu}\big)^{-1}((0,1))$. Define 
        \begin{alignat*}{4}
            &Y_L(y)&&=\frac{1}{\mu((-\infty,y])}\int_{(-\infty,y]}z\mu(dz),\quad &&Y_R(y)&&=\frac{1}{\mu((y,\infty))}\int_{(y,\infty)}z\mu(dz),
            \\
            &V_L(y)&&=\frac{1}{\mu((-\infty,y])}\int_{(-\infty,y]}v(z)\mu(dz),\quad &&V_R(y)&&=\frac{1}{\mu((y,\infty))}\int_{(y,\infty)}v(z)\mu(dz).
        \end{alignat*}
        Let $$\mathcal{E}[\mu,v](y):=V_L(y)-V_R(y)-\sqrt{Y_R(y)-Y_L(y)}.$$ If $f$ is a $\mu$-measurable function, it is convenient to use the averaging (with respect to the weight $\mu$) notation
        $$\intbar_{B}f(y)\mu(dz):=\frac{1}{\mu(B)}\int_{B}z\mu(dz),$$
        for any Borel set $B\subset\mathbb{R}$ with $\mu(B)>0$.
        Denote by $I$ the closed interval $[-1,1]$. 
        \begin{theorem}\label{continuous-sufficient}
        Let $\rho_0\in\mathcal{P}(\mathbb{R})$, $v_0\in C(\mathbb{R})$ such that
        $$\int_I\rho_0(dz)=1,\quad \int_I z\rho_0(dz)=\int_Iv_0(z)\rho_0(dz)=0.$$ Assume 
        \begin{equation}\label{suff-assump}
        \inf_{y\in \big(M^{\rho_0}\big)^{-1}((0,1))}\mathcal{E}[\rho_0,v_0](y)>0.
        \end{equation}
        Then there exists a GSPS to the repulsive pressureless Euler-Poisson system which starts at $(\rho_0,v_0)$ and collapses into the equilibrium $(\delta_0,0)$ in finite time.
        \end{theorem}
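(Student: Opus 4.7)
The plan is to construct a sequence of discrete initial data $(\rho_0^n, v_0^n)$ satisfying both the conditions of Proposition 3.3 in \cite{Tudorascu-2008} (so that the approximation theorem preceding Corollary \ref{existence-general} applies) and the hypothesis of Theorem \ref{discrete-sufficient} (so that each associated finite DSPS $(\rho^n, v^n)$ collapses to $\delta_0$). The uniform positive gap in \eqref{suff-assump} is precisely what allows the $k$-split inequality to be preserved by the discretization uniformly in $k$, and compactness of $\spt \rho_0 \subset I$ combined with Corollary \ref{time-bound-corollary} gives a collapse time bound independent of $n$. Passing to the GSPS limit, which must be $\delta_0$ after this uniform time, completes the argument.

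For the discretization I would set $y_i^n = N^{\rho_0}((2i-1)/(2n))$ with $m_i^n = 1/n$, and $v_0^n(y_i^n) = v_0(y_i^n)$, plus a vanishing shift to restore $\int z \,\rho_0^n(dz) = \int v_0^n \, d\rho_0^n = 0$ exactly (this is a translation by an $o(1)$ vector, which does not destroy the conditions of Proposition 3.3). The discrete $k$-split averages $Y_L^{(n)}(k), Y_R^{(n)}(k), V_L^{(n)}(k), V_R^{(n)}(k)$ are then midpoint Riemann sums of $N^{\rho_0}$ and $v_0 \circ N^{\rho_0}$ over $(0, k/n)$ and $(k/n, 1)$, respectively. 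Under the change of variable $m = M^{\rho_0}(y)$, these quantities are exactly the Riemann-sum approximations to $Y_L(y), Y_R(y), V_L(y), V_R(y)$; by continuity of $v_0$ and boundedness of $\spt \rho_0$, the approximation error tends to zero uniformly in $m \in [0,1]$.

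Reparametrizing $\mathcal{E}[\rho_0, v_0]$ by $m = M^{\rho_0}(y)$ produces a function that extends continuously to the closed interval $[0,1]$, since at $m = 0^+$ the averages $Y_L, V_L$ tend to $\inf \spt \rho_0$ and $v_0(\inf \spt \rho_0)$ respectively (symmetric statement at $m = 1^-$). Hypothesis \eqref{suff-assump} then yields $\inf_{m \in [0,1]} \mathcal{E}(m) \geq 2\delta > 0$ for some $\delta$, and by the uniform Riemann-sum convergence above, the discrete split functional $\mathcal{E}_n(k) \geq \delta$ for every $k \in \{1, \dots, n-1\}$ whenever $n \geq N_0$. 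Thus Theorem \ref{discrete-sufficient} applies and each $(\rho^n, v^n)$ collapses to $\delta_0$; since $\spt \rho_0^n \subseteq I$, Corollary \ref{time-bound-corollary} gives $\rho_t^n = \delta_0$ for all $t \geq T_0 := 2\sqrt{2}$, uniformly in $n$. The convergence $\max_{t \in [0,T]} W_2(\rho_t^n, \rho_t) \to 0$ from Corollary \ref{existence-general} then forces $\rho_t = \delta_0$ for all $t \geq T_0$.

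The main obstacle is the boundary behavior of the split functional near $m = 0$ and $m = 1$, where the averaging denominators $\rho_0((-\infty, y])$ and $\rho_0((y,\infty))$ degenerate and naive estimates do not give uniform convergence of $\mathcal{E}_n$ to $\mathcal{E}$. A careful analysis using continuity of $v_0$ and compactness of $\spt \rho_0$ is required to extend $\mathcal{E}$ continuously to the closed interval and to obtain an error bound uniform up to the boundary. The strict positivity demanded by \eqref{suff-assump}, as opposed to mere nonnegativity, is essential here: without it, the discrete values $\mathcal{E}_n(k)$ for $k$ near $1$ or $n-1$ could slip below zero, invalidating the application of Theorem \ref{discrete-sufficient}.
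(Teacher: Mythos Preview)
Your proposal is correct and follows the same overall strategy as the paper: construct discrete approximations satisfying the conditions of Proposition~3.3 in \cite{Tudorascu-2008}, verify the hypothesis of Theorem~\ref{discrete-sufficient} for all split indices using the uniform gap in \eqref{suff-assump}, invoke Corollary~\ref{time-bound-corollary} for a collapse time $2\sqrt{2}$ independent of $n$, and pass to the GSPS limit via Corollary~\ref{existence-general}.

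The only real difference is the discretization. The paper partitions $I$ into cells $I_{n,k}$ of width $1/n$ and places a particle at the $\rho_0$-barycenter of each nontrivial cell with mass $m_{n,j}=\rho_0(I_{n,k_j})$ and velocity $v_0(y^0_{n,j})$. With this choice the discrete $j$-split denominator $\sum_{i\le j}m_{n,i}$ coincides with the $\rho_0$-mass of the union of the first $j$ cells, and the difference between the discrete and continuous averages reduces to
\[
\frac{\sum_{i\le j}\int_{I_{n,k_i}}[v_0(y^0_{n,i})-v_0(z)]\,\rho_0(dz)}{\sum_{i\le j}m_{n,i}},
\]
which is bounded by the modulus of continuity $\omega_{v_0}(1/n)$ uniformly in $j$ because the denominator cancels. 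This built-in cancellation bypasses the boundary degeneracy at $m\in\{0,1\}$ that you correctly flag as the delicate point of the equal-mass quantile scheme; your continuous-extension-to-$[0,1]$ argument handles it, but the paper's spatial discretization simply avoids it. The paper also does not recenter: its $\rho_0^n$ automatically has zero center of mass (barycenters weighted by cell masses), and it lets the small residual momentum $c_0^n\to 0$ be absorbed in the limit rather than shifting it away at the outset.
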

        \begin{proof} Without loss of generality, assume there is no $0<a<1$ such that $\rho_0((a,1])=0$. If there is such an $a$, it is apparent how to appropriately modify this proof. As expected, we will construct such a solution as limit of discrete solutions $(\rho^n,v^n)$ which collapse into the equilibrium by a finite time that is independent of $n$. Let $n\geq 2$ be an integer and set $I_{n,k}:=[k/n,(k+1)/n)$ if $-n\leq k\leq n-2$ and $I_{n,n-1}:=[1-1/n,1]$. Next let $k_1<\ldots<k_{l(n)}$ be the set of all integers $k$ such that $-n\leq k\leq n-1$ and $\rho_0(I_{n,k})>0$. By the assumption that no set of the type $(a,1]$ (for some $0<a<1$) has measure zero, we see that $k_{l(n)}=n-1$. Denote $m_{n,j}:=\rho_0(I_{n,k_j})$, $j=1,\ldots,l(n)$.
        For $j=1,\ldots,l(n)$ define 
        $$y^0_{n,j}:=\intbar_{I_{n,k_j}}z\rho_0(dz).$$
         Since $\rho_0(I_{n,k_j})>0$ and $y^0_{n,j}$ is the center of mass of $
        \rho_0$ on $I_{n,k_j}$, we have $M^{\rho_0}(y^0_{n,j})>0$. Furthermore, $M^{\rho_0}(y^0_{n,j})=1$  if and only if $\rho_0$ is supported in $[-1,y^0_{n,j}]$.  Again, by the assumption that no set of the type $(a,1]$ (for some $0<a<1$) has measure zero, it must be that $j=l(n)$, $k_{l(n)}=n-1$ and $y^0_{n,l(n)}=1$, which means $y^0_{n,l(n)}$ cannot be the center of mass of $I_{n,n-1}=[1-1/n,1]$ with respect to $\rho_0$ unless $\rho_0(I_{n,n-1})=\rho_0(\{1\})>0$. Thus,  $y^0_{n,j}\in \big(M^{\rho_0}\big)^{-1}((0,1))$ for all $j=1,\ldots,l(n)-1$ but not necessarily for $j=l(n)$.
        Next we set $$\rho^n_0:=\sum_{j=1}^{l(n)}m_{n,j}\delta_{y^0_{n,j}},$$ pick $\xi\in C^1_c(\mathbb{R})$ and estimate
        $$\bigg|\int\xi d\rho_0^n-\int\xi d\rho_0\bigg|\leq\sum_{j=1}^{l(n)}\int_{I_{n,k_j}}|\xi(z)-\xi(y^0_{n,j})|\rho_0(dz)\leq\frac{1}{n}\|\xi'\|_\infty,$$
        which implies $W_2(\rho_0^n,\rho_0)\rightarrow 0$ as $n\rightarrow\infty$, given that $\rho_0,\ \rho_0^n$ are probability measures supported in the compact interval $I$. For the same reason, this convergence in the 2-Wasserstein distance implies all the conditions imposed in Corollary \ref{existence-general}. To determine the initial velocities we use, as in the proof of said theorem, the fact that $v_0^n\rho_0^n$ is the distributional derivative of $F^n_0\circ M_0^n$, where $M_0^n$ is the right-continuous cumulative distribution function of $\rho_0^n$ and $F^n_0(m):=\int_0^m v_0\circ N_0^n(\omega)d\omega$. By Corollary 2.3 \cite{Tudorascu-2008}, we see that 
        $$v_0^n(y^0_{n,j})=\int_0^1v_0\circ N_0^n((1-s)M_0^n(y^0_{n,j}-)+sM_0^n(y_{n,j}^0))ds=v_0(y_{n,j}^0).$$
        Thus, to check that $(\rho^n_0,v_0^n)$ satisfies the hypothesis of Theorem 4.8, pick $1\leq j\leq l(n)-1$ and compute 
        $$V^n_L(y^0_{n,j})-V_L(y^0_{n,j})=\frac{\sum_{i=1}^j\int_{I_{n,k_i}}[v_0(y^0_{n,i})-v_0(z)]\rho_0(dz)}{\sum_{i=1}^jm_{n,i}}$$
        and use the continuity of $v_0$ on the compact interval $I$ to conclude that for any $\varepsilon>0$ we have $$|V^n_L(y^0_{n,j})-V_L(y^0_{n,j})|\leq \varepsilon\mbox{ for all }j=1,\ldots,l(n)-1\mbox{ and all sufficiently large }n.$$
        Similar conclusions hold for the $V_R,\ V_R^n,\ Y_L,\ Y_R,\ Y_L^n,\ Y_R^n$ terms, which means that for any given $\varepsilon>0$ and sufficiently large $n$ we have 
        $$\big|\mathcal{E}[\rho_0^n,v_0^n](y^0_{n,j})-\mathcal{E}[\rho_0,v_0](y^0_{n,j})\big|\leq\varepsilon\mbox{ for all }j=1,\ldots,l(n)-1.$$
       
        We see now that \eqref{suff-assump} implies that 
        $$\mathcal{E}[\rho_0^n,v_0^n](y^0_{n,j})>0\mbox{ for all sufficiently large }n\mbox{ and for all }j=1,\ldots,l(n)-1.$$
        So now the hypothesis of Theorem \ref{quad-envelope} are satisfied, which means $(\rho^n,v^n)$ collapses in finite time into the dynamic equilibrium $(\delta_{c^n_0t},c^n_0)$, where $c^n_0:=\int v_0^nd\rho_0^n$. Indeed, it follows by construction that $\rho_0^n$ has zero center of mass. Since 
        $$c_0^n=\sum_{j=1}^{l(n)}m_{n,j}v_0(y_{n,j}^0)-\int v_0d\rho_0=\sum_{j=1}^{l(n)}\int_{I_{n,k_j}}[v_0(y^0_{n,j})-v_0(z)]\rho_0(dz),$$
        we deduce, by the uniform continuity of $v_0$ on $I$, that $c_0^n$ converges to zero as $n\rightarrow\infty$. By Corollary \ref{time-bound-corollary} we know that the time to collapse is bounded above by $2\sqrt{2}$ because all the measures $\rho_0^n$ are supported in $I=[-1,1]$. Therefore, it must be that $\rho_t=\delta_0$ and $v_t(0)=0$ for all $t\geq2\sqrt{2}$. 
        \end{proof}
\section{Numerical Examples}
    \begin{figure}[h!]
    \centering
    \begin{subfigure}[b]{0.4\linewidth}
        \includegraphics[width=\linewidth]{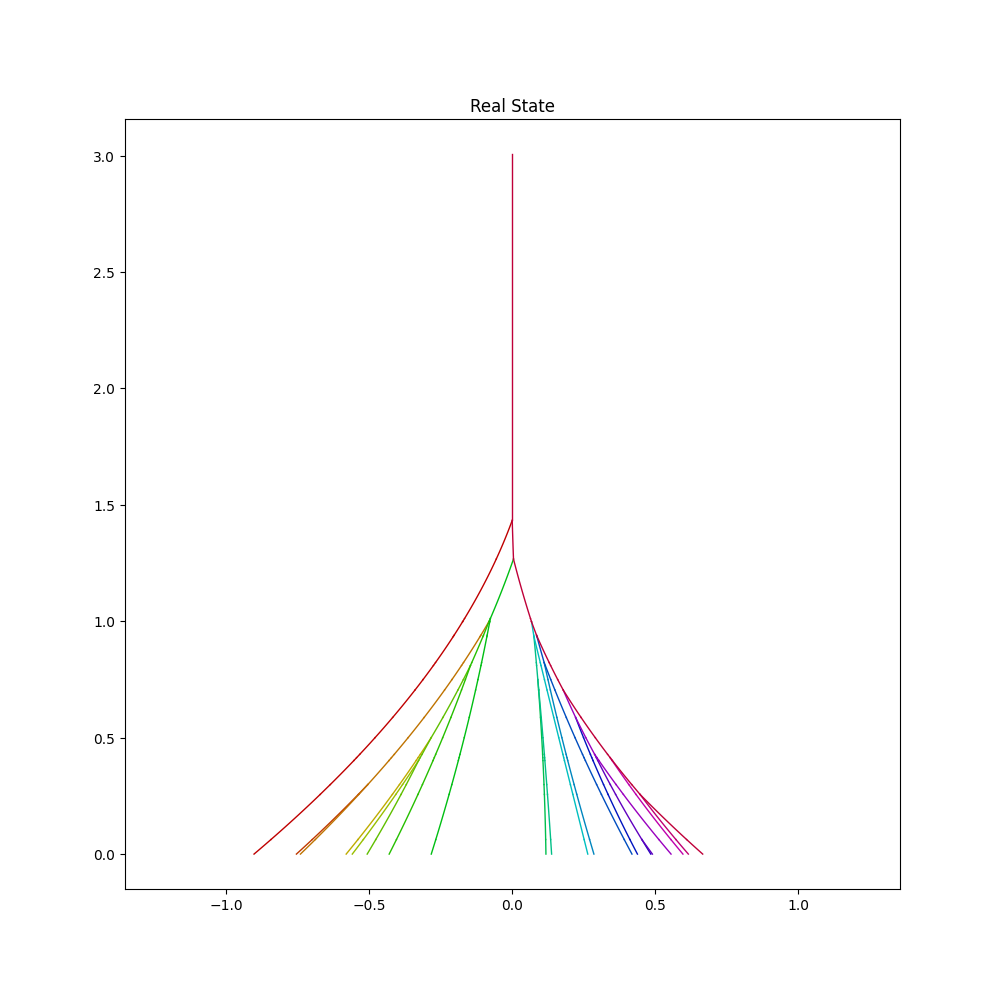}
        \end{subfigure}
        \begin{subfigure}[b]{0.4\linewidth}
        \includegraphics[width=\linewidth]{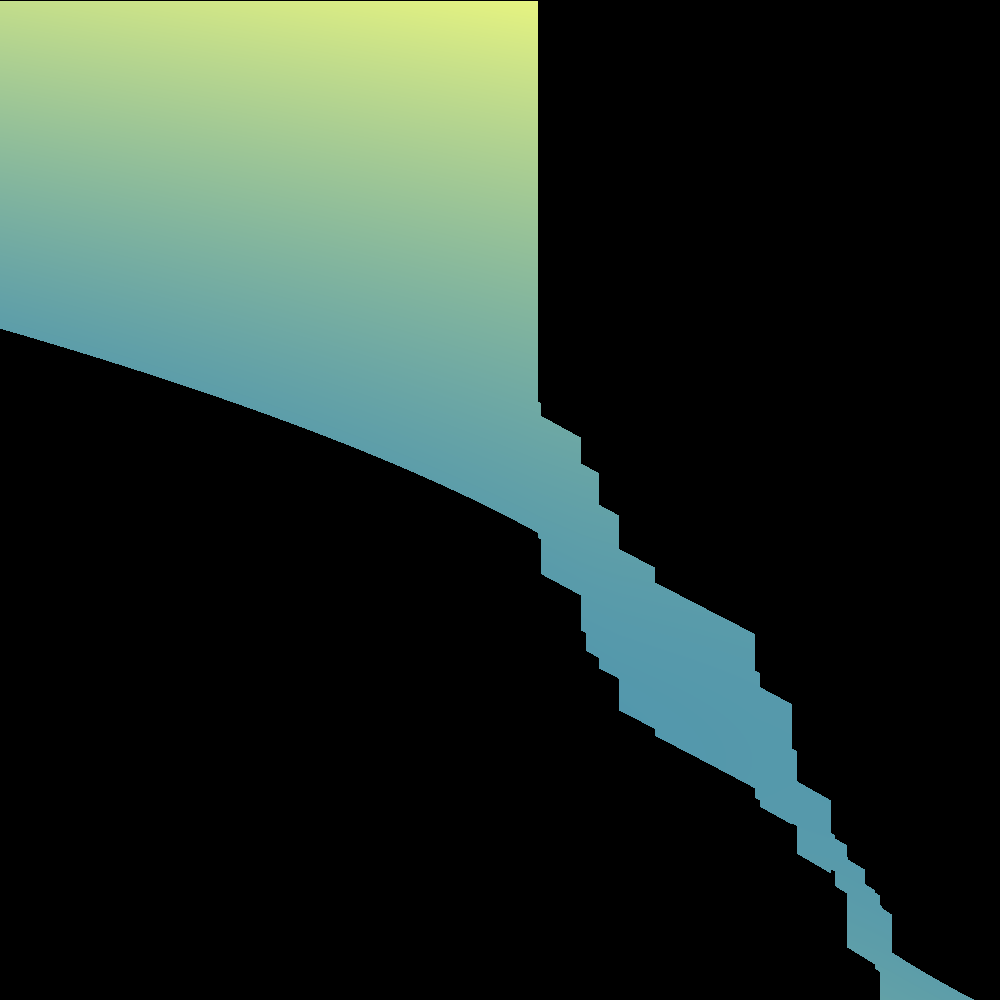}
    \end{subfigure}
    \caption{An example DSPS and the state of the solution as the position and velocity of a particle varies.}
    \label{fig:numerical}
    \end{figure}
    Figure \ref{fig:numerical} demonstrates the sensitivity that the repulsive case has to initial conditions. The image on the left is a random system of particles.\footnote{The particular values for the system of particles can be found in \texttt{region\_data/example.csv} in the GitHub repository. The first row in the CSV are the masses of the particles, the second are the positions, and the third are the velocities. The rest of the rows are cached data and are not important.} The image on the right demonstrates how the convergence of the system changes as we shift the position and velocity of the first particle. The difference from the first particle's original position is on the horizontal axis, and the difference in velocity is on the vertical, both ranging from $-2$ to $2$. Each pixel on the image represents a system that has the first particle of the system on the left shifted by the corresponding values on the horizontal (position) and vertical (velocity) axes. Black represents that the system given does not converge, and any other color represents convergence. A more blue color represents a smaller Hamiltonian, and an orange color represents a larger Hamiltonian. 
    
    The GitHub can be found at \url{https://github.com/CommanderCOOL2005/WVU-Sticky-Particles}. To view this graph in the code, run \texttt{region\_plotter.py} in the git repository. You can click anywhere on the plot to see the evolution of the corresponding system.
\section{Future Steps}\label{conjectures-section}
One would expect that since finite DSPS satisfy the quadratic bound presented in section \ref{quadratic-section}, the solutions which they can approximate, i.e., $\bar{\mathcal{S}}$, would satisfy a similar bound. We pose the following conjecture:
        \begin{conjecture}
            Let $(\rho,v)$ be a GSPS to \eqref{PEP}, with $\rho_0$ compactly supported and with $0$ center of mass and $0$ total momentum. Take $y_L$, $y_R$, and $f$ as given in Theorem \ref{quad-envelope}. Then, there exists $t_0>0$ such that $\rho_{t_0}=\delta_0$ if and only if $\spt\rho_t\subseteq[y_L(0)f(t),y_R(0)f(t)]$ for all $t\in\big(0,\inf(\{2\sqrt{y_R(0)-y_L(0)}\}\cup\{s\geq0|\rho_s=\delta_0\})\big)$. 
        \end{conjecture}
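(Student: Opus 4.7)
The plan is to deduce the conjecture from Theorem~\ref{quad-envelope} by approximating the GSPS with finite DSPS and passing to the limit. Using Galilean invariance (Proposition~\ref{properties}, \ref{galilean-invariance}) one reduces to the case of zero center of mass and zero total momentum, and extends $v_0$ continuously outside $\spt\rho_0$ so as to invoke Corollary~\ref{existence-general}. This yields a sequence of finite DSPS $(\rho^n,v^n)$ with $\max_{t\in[0,T]}W_2(\rho^n_t,\rho_t)\to 0$ for every $T>0$ and with $v^n_t\rho^n_t\to v_t\rho_t$ strongly for a.e.\ $t$. The approximation is arranged so that $\spt\rho^n_0\subseteq[y_L(0),y_R(0)]$, $y_L^n(0)\to y_L(0)$, $y_R^n(0)\to y_R(0)$, and (after a vanishing correction) each $\rho^n_0$ has zero center of mass and zero total momentum; the DSPS envelope endpoints $y_L^n(0)f^n(t)$ and $y_R^n(0)f^n(t)$ then converge uniformly to $y_L(0)f(t)$ and $y_R(0)f(t)$ on compact time intervals.

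For the sufficient direction, assume $\spt\rho_t\subseteq[y_L(0)f(t),y_R(0)f(t)]$ throughout $\bigl(0,2\sqrt{y_R(0)-y_L(0)}\bigr)$. Since $f$ vanishes at the endpoint, the envelope shrinks to $\{0\}$; combined with the $W_2$-continuity of $t\mapsto\rho_t$ inherited from the approximation, this forces $\rho_{2\sqrt{y_R(0)-y_L(0)}}=\delta_0$, establishing finite-time collapse.

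For the necessary direction, assume $\rho_{t_0}=\delta_0$. I would argue that, for all sufficiently large $n$, the DSPS $\rho^n$ itself collapses; Theorem~\ref{quad-envelope} then yields $\spt\rho^n_t\subseteq[y_L^n(0)f^n(t),y_R^n(0)f^n(t)]$ for every $t$, and since $W_2$-convergence implies weak convergence of $\rho^n_t$ together with convergence of the endpoints of the supporting intervals, the support of the weak limit $\rho_t$ is contained in the limiting envelope for each fixed $t\in\bigl(0,\min\bigl(t_0,2\sqrt{y_R(0)-y_L(0)}\bigr)\bigr)$.

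The main obstacle is establishing collapse of the approximating $\rho^n$. My preferred route is a contradiction argument: if, along a subsequence, $\rho^{n_k}$ fails to collapse, then Theorem~\ref{quad-envelope} together with the Case~1 / Case~2 analysis in its proof shows that an extremal particle of $\rho^{n_k}$ escapes the envelope at some $T^{n_k}\le 2\sqrt{y_R^{n_k}(0)-y_L^{n_k}(0)}$ and its signed distance from the envelope is thereafter nondecreasing. The goal would be to extract a quantitative lower bound on $\int y^2\,\rho^{n_k}_t(dy)$ valid for $t>t_0$ that contradicts $W_2(\rho^{n_k}_t,\delta_0)\to 0$. The delicate point is that the mass of the escaping extremal particle may itself vanish in the limit, so obtaining a lower bound uniform in $n_k$ requires combining the divergence rate from the envelope proof with Hamiltonian monotonicity (Theorem~\ref{GSPS-Hamiltonian}) and the strong convergence of $v^n_t\rho^n_t$ to control redistribution of mass. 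A fallback strategy is to perturb the initial velocities by a small inward bias $-\varepsilon_n\,\sgn(y)$ with $\varepsilon_n\to 0^+$, which forces each DSPS to collapse by Theorem~\ref{discrete-sufficient}; one would then need to verify that the biased approximating sequence still converges to the same GSPS, replacing the contradiction argument with a direct construction but depending on a stability statement for GSPS that is not explicitly available in the paper.
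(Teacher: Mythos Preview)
The statement you attempt to prove is stated in the paper as a \emph{conjecture}, not a theorem; the paper gives no proof and explicitly flags the forward direction as open. So there is no paper proof to compare against---only the paper's own assessment of the obstruction.

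Your reverse direction (envelope containment implies finite-time collapse) matches the paper's remark that ``the reverse direction is clear'': once $\spt\rho_t\subseteq[y_L(0)f(t),y_R(0)f(t)]$ on $(0,2\sqrt{y_R(0)-y_L(0)})$ and $f(t)\to0$ at the endpoint, $W_2$-continuity of $t\mapsto\rho_t$ forces $\rho=\delta_0$ there. This part is fine.

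The forward direction has a genuine gap, and it is precisely the gap the paper identifies. Your argument rests entirely on showing that the approximating finite DSPS $(\rho^n,v^n)$ themselves collapse for large $n$, so that Theorem~\ref{quad-envelope} applies to them. The paper says outright that ``the approximating DSPS may not converge to equilibrium at all, meaning that the above bound does not apply.'' Your contradiction route does not close this: you yourself note that the escaping extremal particle of $\rho^{n_k}$ may carry mass tending to zero, so its escape contributes nothing to $\int y^2\,d\rho^{n_k}_t$ in the limit and gives no contradiction with $W_2(\rho^{n_k}_t,\delta_0)\to0$. Hamiltonian monotonicity and strong convergence of $v^n_t\rho^n_t$ are insensitive to a vanishingly small mass placed far away, so they do not repair this. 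Your fallback of adding an inward bias $-\varepsilon_n\sgn(y)$ has two defects: first, a small bias does not force all $k$-splits to satisfy the inequality of Theorem~\ref{discrete-sufficient} (the bias shifts the velocity gap by $O(\varepsilon_n)$, which need not dominate the position-gap square root, and for splits not straddling the origin the bias may not even have the right sign); second, as you acknowledge, you would need the biased sequence to converge to the \emph{same} GSPS, and Example~\ref{ill-posed} shows exactly that GSPS in the repulsive case are unstable under such perturbations.

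In short, you have accurately located the obstruction but not overcome it; the conjecture remains open after your proposal just as it does in the paper.
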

        The reverse implication is clear. As for the direct one, we would need to prove existence of approximating DSPS which converge to equilibrium so that the envelope \eqref{quad-envelope1} applies uniformly; then we can pass to the limit and make the desired inference on the GSPS. Even if one can prove uniform bounds on the Wasserstein distance $W_2(\rho_t^n,\rho_t)$, the fact that $\rho_t=\delta_0$ for sufficiently large $t$ would not suffice to infer that for sufficiently large $n$ the quantity $W_2(\rho_t^n,\rho_t)$ converges to zero as $t\rightarrow\infty$. Thus, it is not clear how to apply Theorem \ref{quad-envelope} in order to prove the above conjecture.
        
        Because solutions in $\bar{\mathcal{S}}$ can collapse to a point mass and subsequently expand, it is reasonable as well to expect that such solutions will also \emph{expand} with at most this quadratic rate. (Consider a solution like Example \ref{bad-approximation}.) Of course, if $\rho_{t_0}=\delta_0$, $\rho_t\neq\delta_0$ for $t>t_0$ would violate the sticky condition, so if $(\rho,v)\in\mathcal{S}$, Theorem \ref{quad-envelope} would apply directly to $(\rho,v)$, as well as both corollaries. We also would have that GSPS to \eqref{PEP} which go to equilibrium asymptotically have unbounded support for all $t \ge 0$.

        Also to be resolved is the question of characterization of asymptotic equilibrium: 
        \begin{question}
            If $(\rho,v)$ is a GSPS to \eqref{PEP} with $W_2(\rho_t,\delta_0)\to0$ as $t\to\infty$, do we also have $\|v_t\|_{L^2(\rho_t)}^2\to0$?
        \end{question} 
 Given the nonincreasing behavior of the Hamiltonian, we do not expect rapid oscillations around $\delta_0$; in fact, if the GSPS collapses to $\delta_0$ in finite time, then the velocity trivially vanishes from the collapse time forward.
 
\section{Acknowledgements}
This work was partially supported by the National Science Foundation under Grant Number DMS-2349040 (PI: C. Tsikkou).
	Any opinions, findings, and conclusions or recommendations expressed in this
material are those of the authors and do not necessarily reflect the views of the National Science Foundation.


\begin{thebibliography}{99}

\bibitem{Balague}{\sc D. Balagué, J.A. Carrillo, Y. Yao}, {\em Confinement for repulsive-attractive kernels}, DCDS-B, \textbf{19}, No. 5 (2014) 1227-1248.

\bibitem{BLPTW} {\sc B. Becerra, J. Linderoth, H. Pesin, A. Tudorascu, R. Wassink}, {\em Sticky Particles solutions for the attractive pressureless Euler-Poisson system; a projection formula and asymptotic behavior}, J. Math. Anal. Appl., \textbf{550}, No. 2 (2025).

\bibitem{Gangboetal}{\sc Y. Brenier, W. Gangbo, G. Savar\'{e}, M. Westdickenberg},
   \emph{Sticky particle dynamics with interactions}, J. Math. Pures Appl.
   \textbf{99} (2013), 577--617.

\bibitem{BrenGren}
{\sc Y. Brenier, E. Grenier},
{\em Sticky particles and scalar conservation laws}, SIAM J. Numer. Anal., \textbf{35} (1998). 

\bibitem{Carrilloetal}{\sc J.A. Carrillo, K. Craig, Y. Yao}, {\em Aggregation-diffusion equations: dynamics, asymptotics, and singular limits}, Bellomo, Nicola (ed.) et al., Active particles, Volume 2. Advances in theory, models, and applications. Cham: Birkhäuser. Model. Simul. Sci. Eng. Technol., 65-108 (2019). ZBL1451.76117.


\bibitem{ERS}
{\sc W. E, Yu.G. Rykov, Ya.G.Sinai}, {\em Generalized variational principles, global weak solutions and behavior with random initial data for systems of conservation laws arising in adhesion particle dynamics}, 
Comm. Math. Phys, \textbf{177}, No. 2 (1996), 349--380.

\bibitem{Fet-Huang}{\sc R.C. Fetecau, Y. Huang},
{\em Equilibria of biological aggregations with nonlocal repulsive–attractive interactions},
Phys D: Nonlin. Phen.,
\textbf{260}
(2013), 49--64.

\bibitem{Frank}{\sc R.L. Frank, R.V. Matzke}, {\em Minimizers for an Aggregation Model with Attractive–Repulsive Interaction}, Arch. Rat. Mech. Anal., \textbf{249}, No. 15 (2025).

\bibitem{Huang-Wang} {\sc F. Huang, Z. Wang},
{\em Well Posedness for Pressureless Flow},
Comm. Math. Phys., \textbf{222}, No. 1 (2001), 117--146.

\bibitem{Hynd2019} {\sc R. Hynd},
\emph{Lagrangian Coordinates for the Sticky Particle System}, SIAM J. Math. Anal. \textbf{51}, No. 5 (2019), 3769--3795.


\bibitem{hynd2020trajectorymappressurelesseuler}{\sc R. Hynd}, \emph{A trajectory map for the pressureless Euler equations}, Trans. AMS No. 10 (2020), 6777--6815. 

\bibitem{Hynd-Tudorascu}{\sc R. Hynd, A. Tudorascu}, \emph{Asymptotics of the Sticky Particles evolution}, 
Nonlinearity \textbf{37}, No. 7 (2024), 1--22.

\bibitem{Natile-Savare}{\sc L. Natile, G. Savar\'{e}}, \emph{A Wasserstein Approach to the One-Dimensional Sticky Particle System}, SIAM J. Math. Anal.
   \textbf{41}, No. 4 (2009), 1340--1365.
   

\bibitem{Tudorascu-2008}{\sc T. Nguyen, A. Tudorascu}, \emph{Pressureless Euler/Euler-Poisson Systems via Adhesion Dynamics and Scalar Conservation Laws}, SIAM J. Math. Anal. \textbf{40} (2008), 754--775.

\bibitem{Tudorascu2015}
{\sc T. Nguyen, A. Tudorascu},
{\em One-Dimensional Pressureless Gas Systems with/without Viscosity},
Comm. Part. Diff. Eq., \textbf{40}, No. 9 (2015), 1619--1665.

\bibitem{Suder-Tudorascu}
{\sc M. Suder, A. Tudorascu},
{\em On the Lagrangian description and uniqueness for the one-dimensional Pressureless Euler system},
Indiana Univ. Math. J., https://www.iumj.indiana.edu/ IUMJ/Preprints/9354.pdf (2021).



\bibitem{Tudorascu2023}
{\sc A. Tudorascu},
{\em Sticky Particles with Sticky Boundary}, Meth. Appl. Anal. \textbf{30}, No. 4 (2023), 129--148.

\bibitem{Tudorascu-Wassink}  {\sc A. Tudorascu, R. Wassink}, {\em Compactly supported Sticky Particles solutions decay to equilibrium arbitrarily slowly}, submitted, 2024.
\bibitem{Villani}{\sc C. Villani}, Topics in Optimal Transportation,  American Mathematical Society, 2003.

\bibitem{Zeldovich}{\sc Ya. B. Zeldovich}, \emph{Gravitational instability: an approximate theory for large density perturbations}. Astro. \& Astrophys. \textbf{5} (1970), 84--89.
\end{thebibliography}

\end{document}